\documentclass[twoside,11pt]{article}

\usepackage[margin=1in]{geometry}
\usepackage[sort&compress, numbers]{natbib} 

\usepackage{amsfonts, amssymb, amsmath, amsthm}
\usepackage[mathscr, mathcal]{eucal}
\usepackage{mathrsfs}
\usepackage{graphics}
\usepackage{setspace}
\usepackage{tensor}
\usepackage{graphicx}
\usepackage{verbatim}
\usepackage{cases}
\usepackage{float}
\usepackage{setspace} 
\usepackage[font=small,format=plain,labelfont=bf,up,textfont=it,up]{caption}

\usepackage{mathtools}
\mathtoolsset{showonlyrefs}

\usepackage{booktabs}

\usepackage{color}
\definecolor{darkred}{RGB}{100,0,0}
\definecolor{darkgreen}{RGB}{0,100,0}
\definecolor{darkblue}{RGB}{0,0,150}
\definecolor{purple}{rgb}{0.4,.1,.9}
\usepackage{hyperref}
\hypersetup{colorlinks=true, linkcolor=darkred, citecolor=darkgreen, urlcolor=darkblue}
\usepackage{url}

\renewcommand{\L}{\mathfrak{L}}

\newcommand{\1}{{\bf 1}}

\newcommand{\Rt}{\boldsymbol r}

\newcommand{\Dt}{{\boldsymbol\epsilon}}
\newcommand{\wh}{\widehat}
\newcommand{\fh}{\hat f} 
\newcommand{\fhc}{\hat f^+} 
\newcommand{\Xt}{\widehat{\boldsymbol{x}}^\textsc{ab}} 
\newcommand{\Xh}{\widehat{\boldsymbol{x}}} 
\newcommand{\X}{\boldsymbol{x}} 

\newcommand{\Yt}{\boldsymbol{y}} 
\newcommand{\Y}{\boldsymbol{y}} 

\newcommand{\Zt}{\boldsymbol{z}} 
 
\newcommand{\U}{U}
\newcommand{\Uh}{\wh U}
\newcommand{\Ut}{\wt U}
\newcommand{\V}{V}

\newcommand{\Vt}{\wt V}

\newcommand{\wt}{\widetilde}

\def\longto{\longrightarrow}

\def\ball{B}

\newtheorem{theorem}{Theorem}[section]
\newtheorem{lemma}{Lemma}[section]
\newtheorem{proposition}{Proposition}[section]

\numberwithin{equation}{section}
\newtheorem{contribution}{Contribution}

\theoremstyle{remark}
\newtheorem{remark}{Remark}[section]

\newtheorem{assumption}{Assumption}

\newcommand{\thmref}[1]{Theorem~\ref{thm:#1}}
\newcommand{\prpref}[1]{Proposition~\ref{prp:#1}}

\newcommand{\lemref}[1]{Lemma~\ref{lem:#1}}
\newcommand{\secref}[1]{Section~\ref{sec:#1}}
\newcommand{\appref}[1]{Appendix~\ref{sec:#1}}

\newcommand{\aspref}[1]{Assumption~\ref{asp:#1}}

\DeclareMathOperator{\rank}{rank}
\DeclareMathOperator{\trace}{trace}

\DeclareMathOperator{\range}{range}

\def\cA{\mathcal{A}}
\def\cB{\mathcal{B}}
\def\cC{\mathcal{C}}

\def\cE{\mathcal{E}}

\def\cJ{\mathcal{J}}

\def\cN{\mathcal{N}}

\def\cR{\mathcal{R}}
\def\cS{\mathcal{S}}
\def\cT{\mathcal{T}}

\def\bbE{\mathbb{E}}

\def\bbG{\mathbb{G}}

\def\bbN{\mathbb{N}}

\def\bbR{\mathbb{R}}

\newcommand{\E}{\operatorname{\mathbb{E}}}
\renewcommand{\P}{\operatorname{\mathbb{P}}}

\newcommand{\Cov}{\operatorname{Cov}}

\def\weak{\rightharpoonup}
\def\longweak{\xrightharpoonup{\quad}}

\def\eps{\varepsilon}
\def\implies{\ \Rightarrow \ }
\def\iff{\ \Leftrightarrow \ }

\begin{document}

\title{Confidence Bands for the Gradient Lines of a Density Function}
\author{
Ery Arias-Castro\footnote{Department of Mathematics and Halıcıoğlu Data Science Institute, University of California, San Diego, California}
\and
Wanli Qiao\footnote{Department of Statistics, George Mason University, Fairfax, Virginia}
}

\date{\today}
\maketitle

\begin{abstract}
We consider the problem of estimating the gradient ascent line of a density originating at a given point. Going beyond mere consistency, we establish a weak convergence result for a plugin estimator based on a kernel density estimator of the density. We then leverage that result to construct a confidence region for the gradient ascent line, including by bootstrap. 

\medskip\noindent
{\em Keywords and phrases:}
gradient lines; gradient flow; dynamical systems; ordinary differential equations; Morse theory; Mean Shift; kernel density estimation; confidence bands; bootststrap
\end{abstract}

\section{Introduction} 
\label{sec:introduction}

Given a density $f$ on $\bbR^d$ and a point $x_0\in \bbR^d$, we are interested in the integral curve driven by the gradient of $f$ with initial condition $x_0 \in \bbR^d$,
\begin{align}\label{odemodel}
\X'(t) = \nabla f(\X(t)), \quad t\geq 0; \quad \X(0) = x_0.
\end{align}
This is the gradient ascent flow associated with $f$ originating from $x_0$, and a foundational result in the theory of ordinary differential equations guarantees that it is well defined, reaching a critical point at the limit $t\to \infty$, if $\nabla f$ is Lipschitz. (This condition is not necessary, but it is sufficient.)

Interest in $\X$ stems from applications to clustering as defined by \citet{fukunaga1975}, and others \cite{cheng2004estimating, li2007nonparametric, roberts1997parametric}, where clusters correspond to basins of attraction of modes. When the density is of Morse regularity, it is known that this indeed defines a partition of the support up to a set of zero measure. 

Methodology aligning or inspired by this definition of clustering is often found under the umbrella name of `mean shift', which comes from an approach already suggested in \cite{fukunaga1975} for estimating the gradient lines originating at data points. See the review paper \cite{carreira2015clustering}. Theory for the problem of estimating the density gradient ascent lines is developed in \cite{cheng1995, comaniciu2002mean, cheng2004estimating, arias2016estimation, carreira2000mode}, and more recently, in \cite{arias2025clustering}, where a number of methods are shown to be consistent, including the mean shift as defined by \citet{cheng1995}. Some convergence rates are established in \cite{arias2016estimation, arias2025clustering}.

\subsection{Contribution}
Given an iid~sample from $f$, denoted $X_1,\dots,X_n$, we estimate $f$ by the following kernel density estimator
\begin{align}\label{f hat}
\fh(x) = \frac{1}{nh^d} \sum_{i=1}^n K\left( \frac{x-X_i}{h}\right),
\end{align} 
where $K$ is the kernel function on $\bbR^d$ and $h>0$ is the bandwidth. 
With $\fh$ in hand, we simply estimate the gradient ascent flow defined in \eqref{odemodel} by plugin, meaning, by the gradient ascent flow given by $\fh$ originating from the same point $x_0$,
\begin{align}\label{odemodel hat}
\Xh'(t) = \nabla \fh(\Xh(t)), \quad t\geq 0;\quad \Xh(0) = x_0.
\end{align}

It is shown in \cite{arias2025clustering}, for example, that if $f$ is twice continuously differentiable with bounded second derivative, and of Morse regularity (i.e., the Hessian is non-singular at critical points), and $\hat f$ is accurate to 2nd order, and if the starting point $x_0$ is in the basin of attraction of a mode (which is the case with probability one if it is drawn from $f$), then $\Xh$ is uniformly consistent.

In the present paper, we go much beyond consistency.
\begin{contribution}\label{con:weak}
Under some conditions and after proper scaling, we show that $\Xh-\X$ converges weakly to the solution of a particular linear stochastic differential equation (SDE) on any finite time interval $[0,T]$.
\end{contribution}

We then leverage this weak convergence result to obtain confidence bands.

\begin{contribution}\label{con:band}
We derive confidence regions for $\X$ on any given finite time interval $[0,T]$ in two ways: one construction is by plugin, while   the other construction is by bootstrap.
\end{contribution}

\subsection{More related work}

The estimation of the clusters as basins of attraction of the density modes is directly addressed in \cite{chen2017statistical}, where an error bound (in Hausdorff distance) is derived.

Density modes play a central role in the definition of clustering proposed by \citet{fukunaga1975}, and have been linked to clustering (and mixture modeling) for decades. There is by now a long line of research work on the estimation of density modes, see, e.g., \citep{hartigan1985dip, silverman1981using, minnotte1997nonparametric, dumbgen2008multiscale, burman2009multivariate, genovese2016non, tsybakov1990recursive, mokkadem2003law, genovese2016non, chen2017density}, and the many other references that appear in our recent paper on the topic \cite{arias2022estimation}.

As we have recently argued \cite{arias2023unifying,arias2023moving}, the definition of clustering proposed by \citet{fukunaga1975} is intimately related to the one proposed by \citet{hartigan1975clustering}, where clusters corresponds to connected components of upper level sets. This common understanding of clustering is known as {\em modal clustering} \cite{menardi2016review, chacon2020modal}.
While the estimation of level sets has received a substantial amount of attention \citep{polonik1995measuring, tsybakov1997nonparametric,rigollet2009optimal,mason2009asymptotic,walther1997granulometric,rinaldo2010generalized,singh2009adaptive, qiao2020asymptotics}, the estimation of the whole {\em cluster tree} (the structure that gathers all the clusters at all levels) has also been extensively studied \citep{stuetzle2003estimating, stuetzle2010generalized, rinaldo2012stability, chaudhuri2014consistent, wang2019dbscan, eldridge2015beyond, sriperumbudur2012consistency, steinwart2015fully, steinwart2011adaptive}.

We contribute to a scarce, but growing line of work which establishes weak convergence results for estimators of geometric features of the density. 
Weak convergence (and related bootstrap consistency) results are derived for plugin estimators of density level sets in \cite{chen2017density, qiao2019nonparametric, mammen2013confidence} and for plugin estimators of density ridges in \cite{chen2015asymptotic, qiao2021asymptotic, qiao2025confidence}.
Even closer to our work here is that of \citet{koltchinskii2007integral}, who aim at estimating the integral line of vector field $v: \bbR^d \to \bbR^d$ starting at some given $x_0 \in \bbR^d$, i.e., the curve $\X$ formally defined as the solution to
\begin{align}
\X'(t) = v(\X(t)), \quad t\geq 0; \quad \X(0) = x_0.
\end{align}
While the vector field $v$ is unknown, some noisy observations are available of the form
\begin{align}
V_i = v(X_i) + \xi_i, \quad i = 1, \dots, n,
\end{align}
where the $X_i$ are iid, independent of the $\xi_i$, also assumed iid. The setting is quite similar to ours: While in \cite{koltchinskii2007integral}, the vector field is estimated by kernel smoothing (Naradaya--Watson estimator), in ours $v = \nabla f$ is estimated by kernel density estimation. Consequently, the proof technique underlying the weak convergence result in \thmref{weak} is very much parallel. \citet{koltchinskii2007integral} use that result to derive a confidence region by plugin, and we do the same, but going beyond that, we also derive a confidence region by (smooth) bootstrap. We also build standardized confidence regions, which is not done in \cite{koltchinskii2007integral}.

\subsection{Content}
In \secref{setting} describes the setting, including assumptions on the density as well as the kernel function and bandwidth defining the estimator \eqref{f hat}.
In \secref{weak}, we state and proved the main result of the paper, which is the weak convergence of $\Xh-\X$ when properly scaled (see \thmref{weak}).  
This result is then used to derive confidence regions in \secref{plugin} (plugin construction) and \secref{bootstrap} (bootstrap construction).
\secref{numerics} describes some proof-of-concept numerical experiments. We raise a number of points of discussion in \secref{discussion}. 
Some technical issues are considered in several appendices.

\section{Setting}\label{sec:setting}
We consider a density $f$ on $\bbR^d$ satisfying the following.
\begin{assumption}\label{asp:f}
The density $f$ is $m_0 \ge 3$ times differentiable with bounded derivatives up to $m_0$th order, and converges to zero at infinity.
\end{assumption}
This can be relaxed somewhat, but we choose to work with relatively simple conditions that are reasonably general, to expedite some of the technical arguments that follow.

With $\|\cdot\|$ denoting the Euclidean norm of a vector, or more generally, the Frobenius norm of a matrix or a tensor, define
\begin{align}\label{kappa}
\kappa_m = \sup_{x\in\bbR^d} \|\nabla^m f(x)\|, \ m = 1, \dots, m_0.
\end{align}
While $\nabla f(x)$ denotes the gradient of $f$ at $x$, $\nabla^2 f(x)$ denotes its Hessian, and $\nabla^3 f$ the 3-way tensor gathering its third derivatives, etc. We will let $\partial^{(a_1, \dots, a_d)} f$ denote the partial derivative of $f$ with respect to the 1st variable $a_1$ times, with respect to the 2nd variable $a_2$ times, etc.
By \aspref{f}, $\kappa_k < \infty$ for all $k = 0,\dots,m_0$.
We focus on the interesting case of dimension $d \ge 2$.

We work with a kernel function $K$ satisfying the following.
\begin{assumption}\label{asp:K}
The kernel function $K$ is $m_1 \ge 2$ continuously differentiable, kills moments of order $1, \dots, k_0$, is supported on the unit ball, and integrates to one, and is such that
\begin{align}
\label{VC}
\Big\{\nabla^l K((\cdot - x)/h) : x \in \bbR^d, h > 0\Big\} \text{ is a VC function class for $l = 0, \dots, m_1$.}
\end{align}
\end{assumption}
In particular, by \aspref{K},
\begin{align}\label{K prop}
\int_{\bbR^d} K(x) dx = 1, \qquad 
\int_{\bbR^d} K(x) x^a dx = 0, \ 1 \le |a| \le k_0,
\end{align}
where, for $x = (x_1, \dots, x_d) \in \bbR^d$ and $a = (a_1, \dots, a_d) \in \bbN^d$, $x^a = x_1^{a_1} \cdot \cdots \cdot x_d^{a_d}$, and also $|a| = a_1 + \dots + a_d$. (By convention, $\bbN$ includes zero.)
We note that the technical assumption \eqref{VC} is satisfied, for example, when $K$ is spherically symmetric of the form $K(x) = \textsc{k}(\|x\|^2)$ with $\textsc{k}$ being $m_1+1$ continuously differentiable; or when $K$ is separable of the form $K(x) = \prod_{j=1}^d \textsc{k}_j(x_j)$ with each $\textsc{k}_j$ being $m_1+1$ continuously differentiable.

Recall that we use the kernel function $K$ in our estimator \eqref{f hat} for the density $f$. The accuracy of that estimator is very well understood. Introduce 
\begin{align}\label{eta}
\hat\eta_m = \sup_{x\in\bbR^d} \|\nabla^m \fh(x) - \nabla^m f(x)\|,
\end{align}
which is well-defined for $m = 0, \dots, m_0 \wedge m_1$.

\begin{lemma}\label{lem:etarate}
Under \aspref{f} and \aspref{K}, there is a constant $C$ depending on $K$ and $\bar\kappa$ such that, if $\max\{\kappa_0, \dots, \kappa_{m_0}\} \le \bar\kappa$, almost surely as $n\to\infty$,
\begin{align}
\label{etarate}
\hat\eta_m 
\le C \bigg(h^{(m_0-m) \wedge (k_0+1)} + \sqrt{\frac{\log n}{nh^{d+2m}}}\bigg), \quad \text{for $m = 0, \dots, m_0 \wedge m_1$,} 
\end{align}
whenever $h = h_n$ is such that $n h^d \gg \log n$ and $\log(1/h) \gg \log\log n$.
\end{lemma}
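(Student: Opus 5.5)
We split $\nabla^m \fh - \nabla^m f$ into a deterministic bias part and a stochastic part:
\begin{align}
\nabla^m \fh(x) - \nabla^m f(x) = \big(\nabla^m \fh(x) - \E \nabla^m \fh(x)\big) + \big(\E \nabla^m \fh(x) - \nabla^m f(x)\big),
\end{align}
and bound the supremum over $x\in\bbR^d$ of each piece separately. The first piece will give the $\sqrt{\log n/(nh^{d+2m})}$ term and the second the $h^{(m_0-m)\wedge(k_0+1)}$ term.

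\textbf{Bias.} We use $\nabla^m \fh(x) = (nh^{d+m})^{-1}\sum_i \nabla^m K((x-X_i)/h)$ and integrate by parts $m$ times. Since $K$ is compactly supported and $m\le m_1$, there are no boundary terms, so
\begin{align}
\E\nabla^m\fh(x) = \int K(u)\, \nabla^m f(x-hu)\, du.
\end{align}
Set $r = (m_0-m)\wedge(k_0+1)$. If $r = 0$, the bias is bounded by $2\kappa_{m}\int|K|$, which is a constant. If $r\ge1$, we Taylor expand $\nabla^m f(x-hu)$ around $x$ to order $r-1$ with integral remainder of order $r$. This is legitimate since $m+r\le m_0$, so $\nabla^{m+r}f$ exists and is bounded by $\kappa_{m+r}\le\bar\kappa$. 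The zeroth-order term reproduces $\nabla^m f(x)$ because $\int K=1$. The terms of orders $1,\dots,r-1\le k_0$ vanish by \eqref{K prop}. The remainder is at most $C h^{r}\bar\kappa\int\|u\|^{r}|K(u)|du$, which is finite because $K$ is supported on the unit ball. This bound is uniform in $x$ and deterministic.

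\textbf{Stochastic part.} Fix a component $\partial^a$ with $|a|=m$. The supremum over $x$ of $|\partial^a\fh(x)-\E\partial^a\fh(x)|$ equals $h^{-(d+m)}\sup_{g\in\mathcal G_h}|(P_n-P)g|$, where $\mathcal G_h=\{\partial^aK((x-\cdot)/h):x\in\bbR^d\}$. By \eqref{VC} this class is VC, it has a bounded envelope $\|\partial^a K\|_\infty$, and its variance satisfies
\begin{align}
\sup_{g}\E g(X)^2\le \kappa_0 h^d\int(\partial^aK)^2.
\end{align}
We then invoke the standard uniform-in-bandwidth result for kernel estimators over VC classes, namely Talagrand's inequality combined with a VC moment bound, as in Einmahl--Mason and Giné--Guillou. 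Under $nh^d\gg\log n$ and $\log(1/h)\gg\log\log n$, it gives almost surely
\begin{align}
\sup_{g\in\mathcal G_h}|(P_n-P)g| = O\Big(\sqrt{h^d\log(1/h)/n}\Big) = O\Big(\sqrt{h^d\log n/n}\Big).
\end{align}
Multiplying by $h^{-(d+m)}$ yields $\sqrt{\log n/(nh^{d+2m})}$. Summing over the finitely many multi-indices $a$ handles the Frobenius norm.

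\textbf{Main obstacle.} The main obstacle is obtaining an almost sure bound, as opposed to one in probability, with the constant depending only on $K$ and $\bar\kappa$. This requires a blocking argument over dyadic ranges of $n$ combined with Borel--Cantelli, as in Giné--Guillou. The bandwidth conditions are exactly what make the resulting exponential tail bounds summable and let the variance term dominate the envelope term in Talagrand's inequality. I would cite that theorem directly rather than reprove it, checking only that the variance bound uses nothing beyond $\kappa_0\le\bar\kappa$.
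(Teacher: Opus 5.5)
Your proposal is correct and follows essentially the same route as the paper. The paper splits $\hat\eta_m$ into a bias term, handled by integration by parts and the moment-killing property as in \eqref{EVHRate}, and the stochastic term \eqref{etavar}, for which it simply cites the VC-class/Talagrand bound of Gin\'e and Guillou (and related results of Mason). Your write-up adds detail the paper omits: the explicit Taylor expansion for the bias, and a check of the envelope and variance conditions ($\sup_g \E g(X)^2 \le \kappa_0 h^d \int (\partial^a K)^2$) needed to invoke that cited result with a constant depending only on $K$ and $\bar\kappa$.
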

This result hinges on the fact, under the same conditions, 
\begin{align}\label{etavar} 
\hat\eta^{\rm sd}_m = \sup_{x\in\bbR^d} \big\|\nabla^m \fh(x) - \E[\nabla^m \fh(x)]\big\| 
\le C \sqrt{\frac{\log n}{nh^{d+2m}}}, \quad \text{for $m = 0, \dots, m_0 \wedge m_1$.}
\end{align}
The bound \eqref{etavar} is directly taken from \cite{gine2002rates}, which traces a direct line to pioneering work by Talagrand. See also \cite[Lem 2, Lem 3]{arias2016estimation}, which are derived as immediate consequences of results appearing in \cite{mason2011general,mason2012proving}, where more references are provided. 
(We mention all this because the bound \eqref{etavar} will be used later on. The fact that $C$ depends only on $K$ and on $f$ only through an upper bound on $\kappa_0, \dots, \kappa_{m_0}$ plays a role in the proof of \thmref{weak seq}.)

\section{Weak convergence}
\label{sec:weak}

Let $\cC[0,T]$ denote the class of continuous functions defined on $[0,T]$ with values in $\bbR^d$. (The dependency on $d$ is thus left implicit.)

\begin{theorem}\label{thm:weak}
In the setting of \secref{setting}, consider $x_0$ such that $f(x_0) > 0$, and the gradient ascent flows defined in \eqref{odemodel} and \eqref{odemodel hat}. Assuming that $h = h_n \to 0$ in such a way that $nh^{d+4}/\log n\to \infty$ and $n h^{d+1+2[(m_0-1) \wedge (k_0+1)]} \to 0$, the sequence of stochastic process $\sqrt{nh^{d+1}}(\Xh-\X)$ converges weakly in the space $\cC[0,T]$ to the Gaussian process $\U$ defined by the following SDE
\begin{align}\label{SDEMain}
d\U(t)
= \nabla^2 f(\X(t))\U(t)dt + \bigg[f(\X(t)) \int_{\bbR}\int_{\bbR^d} \nabla K(z) \nabla K\big(\tau \nabla f(\X(t)) + z\big)^\top dz d\tau\bigg]^{1/2}dW(t),
\end{align}
with initial condition $\U(0)=0$. Above, $W$ is a standard Brownian motion in $\bbR^d$.
\end{theorem}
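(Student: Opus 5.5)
We follow the route of \citet{koltchinskii2007integral}: linearize the flow, reduce the error to a stochastic integral of the gradient noise along the curve, and prove that this integral converges as a process.

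\textbf{Step 1 (linearization).} Let $\Dt = \Xh - \X$. Subtracting \eqref{odemodel} from \eqref{odemodel hat} and Taylor expanding gives
\begin{align}
\Dt'(t) = \nabla^2 f(\X(t))\Dt(t) + \big(\nabla\fh - \nabla f\big)(\X(t)) + \Rt(t),
\end{align}
with $\|\Rt(t)\| \le \kappa_3\|\Dt(t)\|^2/2 + \hat\eta_2\|\Dt(t)\|$. By consistency on $[0,T]$, which follows from Gronwall and \lemref{etarate}, we get $\sup_{[0,T]}\|\Dt\| \lesssim \hat\eta_1$. The condition $nh^{d+4}/\log n\to\infty$ makes $\hat\eta_2\to 0$ and $\hat\eta_1^2 = o((nh^{d+1})^{-1/2})$, so $\sqrt{nh^{d+1}}\,\Rt$ is uniformly negligible. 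Let $\Phi(t,s)$ be the fundamental matrix of $y' = \nabla^2 f(\X(t))y$. Variation of constants then gives
\begin{align}
\sqrt{nh^{d+1}}\,\Dt(t) = \int_0^t \Phi(t,s)\, Z_n(s)\,ds + o_P(1),
\end{align}
uniformly in $t$, where $Z_n(s) = \sqrt{nh^{d+1}}\,(\nabla\fh - \nabla f)(\X(s))$.

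\textbf{Step 2 (bias).} We split $Z_n = \sqrt{nh^{d+1}}(\nabla\fh - \E\nabla\fh)(\X(s)) + \sqrt{nh^{d+1}}(\E\nabla\fh - \nabla f)(\X(s))$. The bias term is $O(h^{(m_0-1)\wedge(k_0+1)})$, so the condition $nh^{d+1+2[(m_0-1)\wedge(k_0+1)]}\to 0$ kills it. The integral of the centered part over $[0,t]$ equals $\sum_i (\xi_i(t) - \E\xi_i(t))$, where
\begin{align}
\xi_i(t) = \frac{1}{\sqrt{n h^{d+1}}}\, h^{-1} \int_0^t \Phi(t,s)\nabla K\big((\X(s)-X_i)/h\big)\,ds .
\end{align}
This is a sum of iid processes.

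\textbf{Step 3 (covariance and fidi limits).} We compute $\Cov(\xi_1(t),\xi_1(t'))$. Only pairs $s,s'$ with $|s-s'|\lesssim h$ contribute, because the curve moves at speed $\|\nabla f(\X(s))\|$. We substitute $X = \X(s) - hz$ and $s' = s + h\tau$, and expand $\X(s+h\tau)\approx \X(s) + h\tau\nabla f(\X(s))$. To leading order this gives
\begin{align}
\int_0^{t\wedge t'}\Phi(t,s)\,\Sigma(s)\,\Phi(t',s)^\top ds, \qquad \Sigma(s) = f(\X(s))\int_{\bbR}\int_{\bbR^d}\nabla K(z)\nabla K\big(\tau\nabla f(\X(s))+z\big)^\top dz\,d\tau .
\end{align}
This is exactly the covariance of the solution of \eqref{SDEMain}, namely $\U(t) = \int_0^t\Phi(t,s)\Sigma(s)^{1/2}dW(s)$. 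Here we need $\Sigma(s)$ to be positive semidefinite; this holds because $\Sigma(s)$ is a limit of covariance matrices. We also need $\nabla f(\X(s))\ne 0$ for the $\tau$-integral to be finite. If the flow reaches a critical point this fails, so we restrict to the generic case, or note that on $[0,T]$ the gradient is nonzero unless $x_0$ is critical. Lyapunov's condition holds since $\|\xi_1\| = O((nh^{d+1})^{-1/2}h^{-1}\cdot h)$ is uniformly small. Lindeberg's CLT then gives finite-dimensional convergence.

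\textbf{Step 4 (tightness), the main obstacle.} To factor out the $t$-dependence, we write $\Phi(t,s) = \Phi(t,0)\Phi(0,s)$. It then suffices to show tightness of $G_n(t) = \sum_i\int_0^t \Phi(0,s)\,(\text{centered kernel term})\,ds$. We bound the increments:
\begin{align}
\E\|G_n(t)-G_n(t')\|^2 \lesssim |t-t'|\quad\text{for } |t-t'|\ge h,
\end{align}
by the same covariance computation restricted to $[t',t]$. For $|t-t'|<h$, the trivial bound $\|G_n(t)-G_n(t')\| \le |t-t'|\cdot\sqrt{nh^{d+1}}\,\hat\eta^{\rm sd}_1 \lesssim |t-t'|\,h^{-1/2}\sqrt{\log n}$ controls the increments, using \eqref{etavar}. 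The combination is delicate: a second-moment bound alone does not give a Kolmogorov criterion. We would therefore use fourth moments, via Rosenthal's inequality on the iid sum, to get $\E\|G_n(t)-G_n(t')\|^4 \lesssim |t-t'|^2 + (nh^{d+1})^{-1}|t-t'|$. Chaining on a grid of mesh $h$, with the sup bound handling within-cell oscillation, then yields asymptotic equicontinuity. Finally, the continuous mapping $G\mapsto\Phi(t,0)G(t)$ transfers the limit to $\U$.
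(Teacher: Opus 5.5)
Your architecture matches the paper's. You linearize, write the error as the gradient noise integrated along $\X$ and propagated by the linearized flow, get finite-dimensional limits from a Lyapunov/Lindeberg CLT, and get tightness from a fourth-moment bound on a grid plus the sup-norm bound \eqref{etavar} for short increments. Your factorization $\Phi(t,s)=\Phi(t,0)\Phi(0,s)$, followed by $G\mapsto\Phi(\cdot,0)G$, plays the role of the paper's Lipschitz map $\mathscr{U}$ applied to $\Vt$. Your mesh-$h$ grid also works in place of the paper's mesh $1/(nh^{d-1})$: the within-cell oscillation is $O(\sqrt{h\log n})$ a.s., and for grid pairs the extra fourth-moment term is $o(|t-t'|^2)$. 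One step, however, fails as written.

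\textbf{The remainder in Step 1.} You bound $\|\Rt(t)\|\le\frac12\kappa_3\|\Xh(t)-\X(t)\|^2+\hat\eta_2\|\Xh(t)-\X(t)\|$ and then plug in the crude bound $\|\Xh-\X\|_T\lesssim\hat\eta_1$.

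The quadratic term is fine. The cross term, however, becomes $\sqrt{nh^{d+1}}\,\hat\eta_2\hat\eta_1$, and $\hat\eta_2\to0$ does not make it small, because $\sqrt{nh^{d+1}}\,\hat\eta_1$ is of order $\sqrt{\log n/h}\to\infty$. With the stochastic rates of \lemref{etarate}, the best you get is $\sqrt{nh^{d+1}}\,\hat\eta_2\hat\eta_1\lesssim\log n/\sqrt{nh^{d+5}}$, which $nh^{d+4}/\log n\to\infty$ does not control. Worse, when $(m_0-1)\wedge(k_0+1)=2$ (e.g.\ $m_0=3$, or any nonnegative kernel), the undersmoothing assumption reads exactly $nh^{d+5}\to0$, so this bound diverges for every admissible bandwidth.

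The culprit is that $\hat\eta_1$ grossly overestimates $\|\Xh-\X\|_T$. Its true order is $(nh^{d+1})^{-1/2}$, thanks to the averaging along the curve. The fix is to keep the remainder relative and absorb it; this is the form of the paper's bound in \prpref{decompo}. Set $\ell_n(t):=\int_0^t\Phi(t,s)(\nabla\fh-\nabla f)(\X(s))\,ds$. Variation of constants gives
\begin{align}
\|\Xh-\X-\ell_n\|_T\le\epsilon_n\|\Xh-\X\|_T,\qquad \epsilon_n:=T\sup_{0\le s\le t\le T}\|\Phi(t,s)\|\big(\hat\eta_2+\tfrac12\kappa_3\|\Xh-\X\|_T\big)\to0\ \text{ a.s.}
\end{align}
Hence, eventually, $\|\Xh-\X\|_T\le\|\ell_n\|_T/(1-\epsilon_n)$, and therefore $\sqrt{nh^{d+1}}\|\Xh-\X-\ell_n\|_T\le\frac{\epsilon_n}{1-\epsilon_n}\sqrt{nh^{d+1}}\|\ell_n\|_T=o_P(1)$. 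The last equality holds because your Steps 2--4 show that $\sqrt{nh^{d+1}}\,\ell_n$ converges weakly and is therefore tight. There is no circularity, since those steps involve only the linear term.

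\textbf{Minor points.}
Your $\xi_i$ carries a spurious factor $h^{-1}$. Since $\nabla\fh(x)=(nh^{d+1})^{-1}\sum_i\nabla K((x-X_i)/h)$, the correct prefactor is $(nh^{d+1})^{-1/2}$, which matches your covariance limit. Correspondingly, the fourth-moment remainder is $(nh^{d-1})^{-1}|t-t'|$; your looser bound still suffices.

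The claim that only $|s-s'|\lesssim h$ contributes needs the chord--arc bound \eqref{gamma}, not just a lower bound on speed. It follows from $f(\X(t))-f(\X(s))\ge v^2(t-s)$, with $v=\min_{[0,T]}\|\nabla f(\X)\|$, together with $|f(\X(t))-f(\X(s))|\le\kappa_1\|\X(t)-\X(s)\|$.

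Your remark that $\nabla f(x_0)\ne0$ is needed is correct; the paper assumes it implicitly in \eqref{gamma}.
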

\begin{remark}
Because the SDE \eqref{SDEMain} is linear, the solution can expressed as follows
\begin{align}\label{U}
\U(t) = \int_0^t M(t) M(s)^{-1} B(s) dW(s),
\end{align}
where, if we set
\begin{align}\label{A}
A(t) =  \nabla^2 f(\X(t)), \qquad M'(t) = A(t) M(t), \quad \text{with $M(0) = I$},
\end{align}
and 
\begin{align}\label{B}
B(t) = \bigg[f(\X(t)) \int_{\bbR}\int_{\bbR^d} \nabla K(z) \nabla K\big(\tau \nabla f(\X(t)) + z\big)^\top dz d\tau\bigg]^{1/2}.
\end{align}
The fact that the integral in \eqref{B} defines a positive semidefinite matrix (so that taking the square root is well-defined) is not apparent, but it is a consequence of our proof arguments further down. See also \lemref{B psd}.
\end{remark}

\begin{remark}
If we insist on working with a nonnegative kernel, then necessarily $k_0=1$, which limits the rate of convergence as $h$ needs to satisfy $n h^{d+5} \to 0$. 
Otherwise, we may as well choose a kernel which kills $k_0 \ge m_0-2$, in which case $h$ is required to satisfy $n h^{d+2m_0-1}\to 0$, which can be much more mild if $m_0$ is large. 
\end{remark}

The remainder of the section is dedicated to establishing the theorem.
For a vector-valued function $g$ defined on $[0,T]$, we will sometimes use the notation
\[\|g\|_T\, = \sup_{0\leq t \leq T} \|g(t)\|.\]

\subsection{Preliminaries}
We start with a basic upper bound on the estimation error. By an appeal to a basic stability theory of ODEs, e.g., \cite[Sec 17.5]{hirsch2012differential}, the following is derived in \cite{arias2016estimation}.
\begin{lemma}\label{lem:XConsist}
Under \aspref{f} and \aspref{K}, 
\begin{align}
\|\Xh(t)-\X(t)\| 
\le \frac{\hat\eta_1}{\kappa_2}[e^{\kappa_2 t}-1], \quad \text{for all } t\geq 0.
\end{align}
In particular, 
\begin{align}\label{X bound}
\|\Xh-\X\|_T 
\le \frac{\hat\eta_1}{\kappa_2}[e^{\kappa_2 T}-1].
\end{align}
\end{lemma}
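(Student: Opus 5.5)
The plan is a Grönwall-type comparison of the two integral equations. Set $\Delta(t) = \Xh(t) - \X(t)$. Since both flows start at $x_0$, writing \eqref{odemodel} and \eqref{odemodel hat} in integral form gives
\begin{align}
\Delta(t) = \int_0^t \big[\nabla \fh(\Xh(s)) - \nabla f(\X(s))\big] ds.
\end{align}
I would split the integrand into a perturbation term and a Lipschitz term:
\begin{align}
\nabla \fh(\Xh(s)) - \nabla f(\X(s)) = \big[\nabla \fh(\Xh(s)) - \nabla f(\Xh(s))\big] + \big[\nabla f(\Xh(s)) - \nabla f(\X(s))\big].
\end{align}

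The first bracket is bounded in norm by $\hat\eta_1$, uniformly in $s$, directly from the definition \eqref{eta}; note that it is evaluated at $\Xh(s)$, which is harmless because $\hat\eta_1$ is a supremum over all of $\bbR^d$. For the second bracket, the mean value inequality together with the definition \eqref{kappa} of $\kappa_2$ as the sup of the Hessian norm (Frobenius dominates the operator norm) gives a bound of $\kappa_2 \|\Delta(s)\|$. Hence $u(t) = \|\Delta(t)\|$ satisfies
\begin{align}
u(t) \le \hat\eta_1 t + \kappa_2 \int_0^t u(s)\, ds.
\end{align}

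Grönwall's inequality in integral form, with the nondecreasing forcing term $\hat\eta_1 t$, then yields $u(t) \le \hat\eta_1 \int_0^t e^{\kappa_2 (t-s)} ds = \frac{\hat\eta_1}{\kappa_2}[e^{\kappa_2 t}-1]$. Alternatively, one can compare $u$ with the solution $v$ of $v' = \hat\eta_1 + \kappa_2 v$, $v(0)=0$, which is exactly this quantity. The bound \eqref{X bound} follows by taking the supremum over $t \in [0,T]$, since the right-hand side is increasing in $t$.

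The only delicate point is well-posedness: both flows must exist globally on $[0,\infty)$. For $\X$ this holds because $\nabla f$ is globally Lipschitz with constant $\kappa_2$, by \aspref{f}. For $\Xh$, the estimate $\nabla \fh$ is a finite sum of smooth compactly supported functions ($K$ is $C^2$ with support in the unit ball), so it is globally Lipschitz too and the solution is global. If $\hat\eta_1 = \infty$ or $\kappa_2 = 0$ the bound is trivial, or should be read as its limit $\hat\eta_1 t$. I expect no real obstacle; the argument is the standard ODE stability estimate cited from \cite{hirsch2012differential}.
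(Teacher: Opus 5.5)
Your proof is correct and follows the same route as the paper. The paper does not prove this lemma itself; it cites the standard ODE stability estimate of \cite{hirsch2012differential} via \cite{arias2016estimation}, and your split into a perturbation term bounded by $\hat\eta_1$ and a Lipschitz term bounded by $\kappa_2\|\Xh(s)-\X(s)\|$, followed by Gr\"onwall, is exactly that estimate. One small precision: the corollary of Gr\"onwall for a nondecreasing forcing term (and likewise the paper's constant-$\kappa$ version in \lemref{Gronwall}) only gives the weaker bound $\hat\eta_1 t e^{\kappa_2 t}$. The sharp bound $\frac{\hat\eta_1}{\kappa_2}[e^{\kappa_2 t}-1]$ needs either the full Bellman--Gr\"onwall form or your comparison with $v' = \hat\eta_1 + \kappa_2 v$, $v(0)=0$, which you correctly supply.
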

 
\begin{proposition}\label{prp:decompo}
Under \aspref{f} and \aspref{K}, we have the following decomposition:
\begin{align}\label{xntdecomp}
\Xh(t)-\X(t) = \Zt(t)+\Dt(t), \quad t\in[0,T],
\end{align}
where $\Zt(t)$ is uniquely defined by the differential equation
\begin{align}\label{Z1ode}
\frac{d\Zt(t)}{dt}=\nabla \fh(\X(t))-\nabla f(\X(t))+\nabla^2 f (\X(t))\Zt(t), \quad \Zt(0)=0,
\end{align}
and $\Dt(t)$ is a remainder term satisfying 
\begin{align}\label{decompo remainder}
\|\Dt\|_T \le T e^{\kappa_2T} \big(\hat\eta_2 \|\Xh-\X\|_T\, + \kappa_3 \|\Xh-\X\|_T^2\big).
\end{align}
\end{proposition}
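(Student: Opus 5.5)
The plan is to define $\Zt$ by \eqref{Z1ode}, set $\Dt := \Xh - \X - \Zt$, derive a linear differential inequality for $\Dt$ whose forcing term is controlled by a first-order Taylor expansion of $\nabla\fh$ around the true path $\X$, and conclude with Gr\"onwall's inequality.

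\emph{Existence and uniqueness.} The ODE \eqref{Z1ode} is linear in $\Zt$. Its coefficient $t\mapsto\nabla^2 f(\X(t))$ is continuous and bounded by $\kappa_2$. Its forcing term $t\mapsto \nabla\fh(\X(t))-\nabla f(\X(t))$ is continuous, since $\fh$ is at least $C^2$ by \aspref{K}. Standard linear ODE theory therefore gives a unique solution on $[0,T]$. Likewise $\nabla\fh$ is globally Lipschitz, because $K$ is $C^2$ with compact support, so $\Xh$ is well defined on $[0,T]$. We then define $\Dt := \Xh-\X-\Zt$, so that \eqref{xntdecomp} holds by construction and $\Dt(0)=0$.

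\emph{Differential equation for $\Dt$.} Subtracting \eqref{odemodel} and \eqref{Z1ode} from \eqref{odemodel hat}, and writing $\delta(t)=\Xh(t)-\X(t)$ and $H(t)=\nabla^2 f(\X(t))$, we get
\begin{align}
\Dt'(t) = \nabla\fh(\Xh(t)) - \nabla\fh(\X(t)) - H(t)\Zt(t) = R(t) + H(t)\Dt(t),
\end{align}
where
\begin{align}
R(t) := \nabla\fh(\Xh(t)) - \nabla\fh(\X(t)) - H(t)\delta(t).
\end{align}
To bound $R$, we use the integral form of the mean value theorem:
\begin{align}
\nabla\fh(\Xh)-\nabla\fh(\X)=\int_0^1 \nabla^2\fh(\X+s\delta)\,ds\,\delta .
\end{align}
For each $s$ we split
\begin{align}
\nabla^2\fh(\X+s\delta)-\nabla^2 f(\X) = \big[\nabla^2\fh-\nabla^2 f\big](\X+s\delta) + \big[\nabla^2 f(\X+s\delta)-\nabla^2 f(\X)\big].
\end{align}
The first bracket is bounded in norm by $\hat\eta_2$. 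The second is bounded by $\kappa_3 s\|\delta\|$, by the mean value theorem applied to $\nabla^2 f$. The operator norm is dominated by the Frobenius norm. Integrating over $s$ therefore gives
\begin{align}
\|R(t)\|\le \hat\eta_2\|\delta\|_T+\tfrac12\kappa_3\|\delta\|_T^2 =: \bar R .
\end{align}
This decomposition is the key step. Expanding around $\X$ using derivatives of $f$ rather than of $\fh$ is what keeps third derivatives of $\fh$ out of the bound, so that only $\hat\eta_2$ and $\kappa_3$ appear. I expect this to be the only point requiring care.

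\emph{Gr\"onwall.} Since $\|H(t)\|\le\kappa_2$, we have $\|\Dt'(t)\|\le \kappa_2\|\Dt(t)\|+\bar R$ with $\Dt(0)=0$. Equivalently, by variation of constants with $\|e^{\int_s^t H}\|\le e^{\kappa_2(t-s)}$, we obtain
\begin{align}
\|\Dt(t)\|\le \int_0^t e^{\kappa_2(t-s)}\bar R\,ds \le t e^{\kappa_2 t}\bar R .
\end{align}
Taking the supremum over $t\in[0,T]$ yields \eqref{decompo remainder}, with the harmless factor $\tfrac12$ on the $\kappa_3$ term dropped.
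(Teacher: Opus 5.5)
Your proof is correct and takes essentially the same route as the paper: you use the same definition $\Dt=\Xh-\X-\Zt$ and the same split of the remainder into a $\hat\eta_2$ term and a $\kappa_3$ Taylor term, then apply Gr\"onwall, whereas the paper works in integral form and bounds $\|\Dt(t)\|\le\|\Rt\|_T e^{\kappa_2 t}$. The one slip is notational: for a time-varying $H$ the propagator is $M(t)M(s)^{-1}$, not $e^{\int_s^t H}$, but its norm still satisfies the bound $e^{\kappa_2(t-s)}$ you use, so the conclusion is unaffected.
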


\begin{proof}
By the fact that $\Zt$ satisfies the differential equation \eqref{Z1ode}, it also satisfies the corresponding integral equation
\begin{align}\label{ZIntegral}
\Zt(t)=\int_0^t[\nabla \fh(\X(s)))-\nabla f(\X(s))]ds+\int_0^t\nabla^2 f (\X(s))\Zt(s)ds.
\end{align}
Derive
\begin{align}\label{DecompY}
\Yt(t)
&:=\Xh(t)-\X(t) \\
&=\int_0^t[\nabla \fh(\Xh(s))-\nabla f(\X(s))]ds\\
&=\int_0^t(\nabla \fh-\nabla f)(\Xh(s))ds+\int_0^t[\nabla f(\Xh(s))-\nabla f(\X(s))]ds \\
&=\int_0^t(\nabla \fh-\nabla f)(\X(s))ds+\int_0^t\nabla^2 f (\X(s))  \Yt(s)ds+\Rt(t), \label{YIntegral}
\end{align}
where
\begin{align}
\Rt(t) 
&= \int_0^t\big[(\nabla \fh-\nabla f)(\Xh(s)) - (\nabla \fh-\nabla f)(\X(s))\big]ds \\ 
&\qquad +\int_0^t \big[\nabla f(\Xh(s))-\nabla f(\X(s)) - \nabla^2 f (\X(s))(\Xh(s)-\X(s))\big]ds.
\end{align}
By Taylor expansions for $\nabla \fh-\nabla f$ and $\nabla f$,
\begin{align}\label{Reminder}
\|\Rt\|_T \leq T\hat\eta_2 \|\Yt\|_T\, + T\kappa_3 \|\Yt\|_T^2.
\end{align}
By the fact that $\Dt(t) =\Yt(t)-\Zt(t)$, from \eqref{ZIntegral} and \eqref{YIntegral}, we have
\begin{align*}
\Dt(t)=\int_0^t\nabla^2 f (\X(s)) \Dt(s)ds+\Rt(t).
\end{align*}
Hence, by the triangle inequality and Jensen's, for all $0 \le t \le T$,
\begin{align}\label{DIneq}
\|\Dt(t)\|
\leq \|\Rt\|_T\,+\int_0^t\|\nabla^2 f (\X(s))\| \; \|\Dt(s)\|ds,
\end{align}
which makes it possible to apply Gr\"onwall's inequality (\lemref{Gronwall}) to get
\begin{align}
\|\Dt(t)\|&\leq \|\Rt\|_T\, \exp\bigg\{\int_0^t\|\nabla^2 f (\X(s))\|ds\bigg\} \\
&\leq \|\Rt\|_T\,  e^{\kappa_2t},
\end{align}
which allows us to conclude by way of \eqref{Reminder}.
\end{proof}

Throughout, we use the following variant of Gr\"onwall's inequality. 
\begin{lemma}\label{lem:Gronwall}
Let $I = [a, \infty)$ with $a\in\bbR$.  Suppose $g: I \mapsto \bbR$ and $\alpha: I \mapsto \bbR$ are nonnegative continuous functions and $\kappa$ is a nonnegative constant, satisfying the inequality
\begin{align*}
g(t)\leq \kappa+\int_a^tg(s)\alpha(s)ds, \quad t\in I.
\end{align*}
Then
\begin{align*}
g(t)\leq \kappa \exp{\bigg(\int_a^t\alpha(s)ds\bigg)}, \quad t\in I.
\end{align*}
\end{lemma}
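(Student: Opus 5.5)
The statement is the Gr\"onwall inequality in integral form with constant $\kappa$. I will prove it by the standard integrating-factor argument applied to the right-hand side of the assumed inequality.

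Define
\[
G(t) = \kappa + \int_a^t g(s)\alpha(s)\,ds, \qquad t \in I .
\]
Since $g$ and $\alpha$ are continuous, $G$ is continuously differentiable with $G'(t) = g(t)\alpha(t)$. The hypothesis says exactly that $g \le G$ on $I$. Because $\alpha \ge 0$, this gives the differential inequality
\[
G'(t) = g(t)\alpha(t) \le \alpha(t) G(t), \qquad t \in I .
\]

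Next introduce the integrating factor $E(t) = \exp\big(-\int_a^t \alpha(s)\,ds\big)$, which is positive and satisfies $E'(t) = -\alpha(t)E(t)$. The product rule gives
\[
\frac{d}{dt}\big[G(t)E(t)\big] = E(t)\big(G'(t) - \alpha(t)G(t)\big) \le 0 .
\]
Hence $GE$ is nonincreasing on $I$. Since $G(a)E(a) = \kappa$, we get $G(t)E(t) \le \kappa$ for all $t \in I$, that is,
\[
G(t) \le \kappa \exp\Big(\int_a^t \alpha(s)\,ds\Big).
\]
Combining this with $g \le G$ yields the claimed bound on $g$.

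There is no real obstacle here. The only points needing care are the following.
\begin{itemize}
\item Differentiability of $G$ relies on the continuity of $g\alpha$, which holds by assumption.
\item The step $g\alpha \le \alpha G$ uses $\alpha \ge 0$, which is the essential hypothesis.
\item The nonnegativity of $g$ and $\kappa$ is not needed for the argument. It only makes the statement natural in the way the lemma is applied in the proof of \prpref{decompo}, where $g = \|\Dt(\cdot)\|$ and $\kappa = \|\Rt\|_T$.
\end{itemize}
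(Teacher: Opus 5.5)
Your argument is correct and complete. You compare $g$ with the right-hand side $G$, use $\alpha \ge 0$ to get $G' \le \alpha G$, and integrate with the factor $\exp\bigl(-\int_a^t \alpha(s)\,ds\bigr)$; this is the standard proof. You are also right that the signs of $g$ and $\kappa$ play no role.

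The paper states this lemma without proof, as a standard variant of Gr\"onwall's inequality, so there is no alternative route to compare against. One point worth adding is that your argument works verbatim on a bounded interval $[a,b]$, with the hypothesis assumed only for $t \in [a,b]$. That is the form the paper actually needs, since it applies the lemma on $[0,T]$ in the proof of \prpref{decompo} and in the Lipschitz bound for $\mathscr{U}$.
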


Based on \eqref{eta} and our assumption on $h$, $\max\{\hat\eta_0, \hat\eta_1, \hat\eta_2\} \to 0$ a.s., and in particular, by \eqref{X bound},
\begin{align}\label{consistent T}
\|\Xh-\X\|_T \to 0\quad \text{a.s.}.
\end{align}
In view of \prpref{decompo}, it thus suffices to establish the weak convergence of $\sqrt{nh^{d+1}}\Zt$ (with $\Zt$ defined in \eqref{Z1ode}) to the Gaussian process solution to \eqref{SDEMain} on the time interval $[0,T]$. 
In what follows, $o$ and $O$ terms hide constant factors that may only depend on $d$, $f$, $K$, $x_0$, and $T$.
We use $X$ to denote a (generic) random variable with density $f$. 

Denote by $\cC_0[0,T]$ the set of all $\bbR^d$-valued continuous functions $v:[0,T] \to \bbR^d$ such that $v(0) = 0$. Define the mapping $\mathscr{U}: \cC_0[0,T] \mapsto \cC[0,T]$ that sends $v\in \cC_0[0,T]$ to the solution of the integral equation
\begin{align}\label{DiffEquDef}
u(t)=v(t)+\int_0^t \nabla^2 f (\X(s))u(s) ds, \quad u(0)=0.
\end{align}
Note that since the solution of \eqref{DiffEquDef} is unique, $\mathscr{U}$ is well-defined as a mapping.
Fix $v_1, v_2\in \cC_0[0,T]$ arbitrary, and let $u_1(t)=\mathscr{U}v_1(t)$ and $u_2(t)=\mathscr{U}v_2(t)$. By the fact that
\begin{equation}
u_j(t) = v_j(t) + \int_0^t \nabla^2 f (\X(s))u_j(s) ds, \quad j = 1,2,
\end{equation}
applying the triangle inequality and Jensen's, we get 
\begin{align*}
\|u_1(t)-u_2(t)\|\leq \|v_1-v_2\|_T\,+\int_0^t\|\nabla^2 f (\X(s))\|\|u_1(s)-u_2(s)\|ds,
\end{align*}
and applying Gr\"onwall's inequality (\lemref{Gronwall}), we further get
\begin{align*}
\|u_1(t)-u_2(t)\|&\leq \|v_1-v_2\|_T\, \exp\bigg\{\int_0^t\|\nabla^2 f (\X(u))\|du\bigg\}\\
&\leq \|v_1-v_2\|_T\, e^{\kappa_2 T},
\end{align*}
valid for all $0\leq t\leq T$. 
We can rewrite this as the fact that, for any $v_1, v_2\in \cC_0[0,T]$,
\begin{align*}
\|\mathscr{U}v_1-\mathscr{U}v_2\|_T \leq e^{\kappa_2 T} \|v_1-v_2\|_T\,,
\end{align*}
so that $\mathscr{U}$ is a Lipschitz mapping with respect to the supnorm. 

Recall \eqref{B} and denote by $\V$ the stochastic process satisfying the SDE
\begin{align}\label{EtaSDE}
d\V(t) = B(t) dW(t), \qquad \V(0)=0.
\end{align}
Then $\mathscr{U}\V$ satisfies \eqref{SDEMain} with value zero at zero.
Consider the following two sequences of processes
\begin{align}
\Vt(t) &:=\sqrt{nh^{d+1}}\int_0^t[\nabla \fh(\X(s))-\nabla f(\X(s))]ds,\label{Vt} \\ 
\Ut(t) &:=\sqrt{nh^{d+1}}\Zt(t), \quad \text{with $\Zt$ defined in \eqref{Z1ode}}, \label{Ut}
\end{align}
defined on $[0,T]$, and note that $\mathscr{U}\Vt = \Ut$. 
As we will show below, $\Vt$ converges weakly in the space $\cC[0,T]$ to $\V$ defined in \eqref{EtaSDE}. Since $\mathscr{U}$ is Lipschitz, this will imply via the Continuous Mapping Theorem that $\Ut$ converges weakly to $\U$, which is what we need to establish.

It thus remains to establish the weak convergence of $\Vt$ as a sequence of stochastic processes indexed by $[0,T]$. 
As is common practice (see, e.g., \cite[Th~18.14]{van1996weak}), we do that by (i) establishing the weak convergence of the finite dimensional marginals; and (ii) establishing asymptotic equicontinuity (aka asymptotic tightness).
Define
\[p_t(s):= \1_{[0,t]}(s) = \1\{0 \le s \le t\}.\]
Let $\L\equiv\L(T)$ be the set of all functions which are bounded and continuous almost everywhere on $\bbR$ with support in $[0, T]$. Obviously, $\L$ is a linear space and $p_t\in \L$ for all $t\in[0, T]$. For any $p\in \L$, define the transformations 
\begin{align}\label{whJp}
\wt\cJ(p):=\int_0^T p(s)  \nabla \fh(\X(s))ds, \qquad \cJ(p):=\int_0^T p(s) \nabla f(\X(s))ds.
\end{align}
Then $\Vt$ can be written as follows
\begin{align}\label{EtaExp}
\Vt(t)=\sqrt{nh^{d+1}}\Big[\wt\cJ(p_t)-\cJ(p_t)\Big].
\end{align}
This leads us to studying $\wt\cJ - \cJ$. 
We decompose $\wt\cJ(p)$ into
$$\wt\cJ(p)=\frac{1}{nh^{d+1}}\sum_{i=1}^n\cE_i(p),$$
with
$$\cE_i(p):=\int_{\bbR} p(s)\nabla K\Big(\frac{\X(s)-X_i}{h}\Big)ds, \quad i=1,\dots,n.$$ 
Define
\begin{align}\label{Eoper}
\cE(p)=\int_{\bbR} p(s)\nabla K\Big(\frac{\X(s)-X}{h}\Big)ds,
\end{align}
and note that, for any $p \in \L$, $\cE(p), \cE_1(p),\cdots,\cE_n(p)$ are iid.

\subsection{Weak convergence of the finite dimensional marginals}
We first examine the first and second moments of $\wt\cJ - \cJ$, and then establish asymptotic normality.

\paragraph{Mean}
We have
\[\bbE[\wt\cJ(p)] -\cJ(p)
=\int_{\bbR} p(s) \Big(\bbE[\nabla \fh (\X(s))] - \nabla f (\X(s))\Big) ds.\]
As is well-known and based on integration by parts and the properties of the kernel function stated in \aspref{K},
\begin{align}\label{EVHRate}
\sup_{x \in \bbR^d} \big\|\bbE[\nabla \fh (x)] - \nabla f(x)\big\| = O(h^{(m_0-1) \wedge (k_0+1)}),
\end{align}
yielding 
\[\bbE[\wt\cJ(p)] -\cJ(p) = O(h^{(m_0-1) \wedge (k_0+1)}) \|p\|_\infty.\]
In particular, under the assumption $n h^{d+1+2[(m_0-1) \wedge (k_0+1)]} \to 0$, we have
\begin{align} \label{J bias}
\sqrt{nh^{d+1}}\Big(\bbE[\wt\cJ(p)] -\cJ(p)\Big) = o(1) \|p\|_\infty,
\end{align}
implying, for any $t \in [0,T]$,
\begin{align}\label{chi mean}
\E[\Vt(t)] = \sqrt{nh^{d+1}}\Big(\bbE[\wt\cJ(p_t)] -\cJ(p_t)\Big) \to 0 = \E[\V(t)], \quad n \to \infty.
\end{align}

For the record, we have found that 
\begin{align}
\bbE[\cE(p)]
= h^{d+1} \bbE[\wt\cJ(p)] 
= h^{d+1} \big(\cJ(p) + o(1) \|p\|_\infty\big) 
= O(h^{d+1}) \|p\|_\infty. \label{E E(p) 3}
\end{align}

\paragraph{Variance} 
For any $p, q\in\L$, 
\begin{align}\label{CovOrder}
\Cov\Big(\wt\cJ(p), \wt\cJ(q)\Big)&=\frac{1}{n^2h^{2d+2}}\Cov\bigg(\sum_{i=1}^n \cE_i(p), \sum_{j=1}^n \cE_j(q) \bigg)\\
&=\frac{1}{nh^{2d+2}}\Cov\Big(\cE(p), \cE(q) \Big)\\
&=\frac{1}{nh^{2d+2}}\Big( \bbE[\cE(p)\cE(q)^\top] - \bbE\cE(p)\bbE\cE(q)^\top\Big).
\end{align}
On the one hand, from \eqref{E E(p) 3}, we immediately get that
\begin{align}\label{COvpart1}
\frac{1}{nh^{2d+2}} \bbE\cE(p) \bbE\cE(q)^\top = O\Big(\frac{1}{n}\Big) \|p\|_\infty \|q\|_\infty.
\end{align}
On the other hand, 
\begin{align}\label{DecomPart1}
\bbE\big[\cE(p)\cE(q)^\top\big] =\int_{\bbR}\int_{\bbR} p(s)q(u)\bbE\bigg[\nabla K\bigg(\frac{\X(s)-X}{h}\bigg) \nabla K\bigg(\frac{\X(u)-X}{h}\bigg)^{\!\!\top} \bigg] duds,
\end{align}
with
\begin{align*}
&\bbE\bigg[\nabla K\bigg(\frac{\X(s)-X}{h}\bigg) \nabla K\bigg(\frac{\X(u)-X}{h}\bigg)^{\!\!\top} \bigg]\\
&\qquad = \int_{\bbR^d}  f(y) \nabla K\bigg(\frac{\X(s)-y}{h}\bigg) \nabla K\bigg(\frac{\X(u)-y}{h}\bigg)^{\!\!\top}dy\\
&\qquad = h^d\int_{\bbR^d} f(\X(s)-hz) \nabla K(z) \nabla K\bigg(\frac{\X(u)-\X(s)+hz}{h}\bigg)^{\!\!\top} dz,
\end{align*}
so that
\begin{align} \label{bbE E E}
&\bbE\big[\cE(p)\cE(q)^\top\big] \\
&=h^d \int_{\bbR}\int_{\bbR} p(s)q(u)\bigg[\int_{\bbR^d} f(\X(s)-hz) \nabla K(z)\nabla K\bigg(\frac{\X(u)-\X(s)+hz}{h}\bigg)^{\!\!\top} dz\bigg] duds \\
&=h^{d+1}\int_0^T\!\!\! \int_{-s/h}^{(T-s)/h}\!\!\! p(s)q(s+\tau h)\bigg[\int_{\ball(0,1)}\!\!\!\!\!\!  f(\X(s)-hz) \nabla K(z)\nabla K\bigg(\frac{\X(s+\tau h)-\X(s)}{h}+z\bigg)^{\!\!\top}dz\bigg] d\tau ds.
\end{align}
The issue is the integral over $\tau$, which is taken over an interval whose length diverges as $h \to 0$. In fact, it can be reduced to an integral over a bounded set. 
Indeed, there is $\gamma > 0$ that depends on $f$, $x_0$, and $T$ such that, for all $0 \le s \le t \le T$, 
\begin{align}\label{gamma}
\left\|\frac{\X(t) - \X(s)}{t-s}\right\| \ge \gamma,
\end{align}
and this implies that
\[\left\|\frac{\X(s+\tau h)-\X(s)}{h}\right\| \ge |\tau| \gamma,\] 
so that, for any $z \in \ball(0,1)$, by the triangle inequality,
\[\left\|\frac{\X(s+\tau h)-\X(s)}{h}+z\right\|
\ge |\tau| \gamma - 1,\]
so that 
\begin{align}\label{tau bounded}
|\tau| > 2/\gamma \implies \frac{\X(s+\tau h)-\X(s)}{h}+z \notin\ball(0,1) \implies \nabla K\bigg(\frac{\X(s+\tau h)-\X(s)}{h}+z\bigg) = 0,
\end{align} 
by the fact that $K$ is supported in the unit ball. Thus the integral with respect to $\tau$ can be taken over the fixed (and finite) interval $[-2/\gamma, 2/\gamma]$. We are thus dealing with a triple integral over a bounded domain, and it is straightforward to see that the integrand satisfies
\begin{align*}
&p(s)q(s+\tau h)  f(\X(s)-hz) \nabla K(z)\nabla K\bigg(\frac{\X(s+\tau h)-\X(s)}{h}+z\bigg)^{\!\!\top} \\
&\to p(s)q(s) f(\X(s)) \nabla K(z)\nabla K(\tau \X'(s)+z)^{\!\!\top}, \quad \text{as } h \to 0,
\end{align*}
for all almost all $z, \tau, s$. 
Applying Lebesgue dominated convergence theorem, we thus obtain
\begin{align}\label{COvpart2}
\frac{1}{h^{d+1}}\bbE\big[\cE(p)\cE(q)^\top\big]
\to \int_{\bbR} p(s)q(s) B(s)^2 ds.
\end{align}
With \eqref{CovOrder}, \eqref{COvpart1}, and \eqref{COvpart2}, we conclude that, for any given $p, q \in \L$,
\begin{align*}
n h^{d+1} \Cov\Big(\wt\cJ(p), \wt\cJ(q)\Big)
&\to \int_{\bbR} p(s)q(s) B(s)^2 ds.
\end{align*}
In particular, for any $t_1, t_2 \in [0,T]$, 
\begin{align}\label{cov asymp}
&\Cov\big(\Vt(t_1), \Vt(t_2)\big) \\
&=\Cov\bigg(\sqrt{nh^{d+1}}(\wt\cJ(p_{t_1}) - \cJ(p_{t_1})), \sqrt{nh^{d+1}}(\wt\cJ(p_{t_2}) - \cJ(p_{t_2}))\bigg) \\
&\rightarrow \int_\bbR p_{t_1}(s) p_{t_2}(s) B(s)^2 ds 
= \int_0^{\min(t_1, t_2)} B(s)^2 ds 
= \Cov\big(\V(t_1), \V(t_2)\big), \label{chi cov}
\end{align}
where the last equality comes from the definition of $\V$ in \eqref{EtaSDE}.

\paragraph{Asymptotic normality}
In view of \eqref{chi mean} and \eqref{chi cov}, it only remains to establish the asymptotic normality. We start with showing that, for any $p \in \L$, $\sqrt{nh^{d+1}}\big(\wt\cJ(p) -\cJ(p)\big)$ is asymptotically normal.
We start with  
\[\sqrt{nh^{d+1}}\big(\wt\cJ(p)-\bbE[\wt\cJ(p)]\big) = \frac{1}{\sqrt{nh^{d+1}}}\sum_{i=1}^n \big(\cE_i(p)-\bbE\cE_i(p)\big),\] 
whose normal limit results from by an application of Lyapunov's central limit theorem. (The fact that $h$ depends on $n$ keeps us from applying the classical central limit theorem.) To verify Lyapunov's condition, we bound the fourth moment of $\cE(p)-\bbE\cE(p)$, starting with
\begin{align}
\bbE \|\cE(p)\|^4
&= \bbE\big[(\cE(p)^\top \cE(p))^2 \big]\\
&=\bbE\left[\left\{\int_{\bbR} \int_{\bbR} p(s)p(s_1)\nabla K\bigg(\frac{\X(s)-X}{h}\bigg)^{\!\!\top}  \nabla K\bigg(\frac{\X(s_1)-X}{h}\bigg)dsds_1\right\}^2\right] \\
&=\int_{\bbR^{d+4}} \nabla K\bigg(\frac{\X(s)-y}{h}\bigg)^{\!\!\top} \nabla K\bigg(\frac{\X(s_1)-y}{h}\bigg) \\
&\hspace{0.5in} \times \nabla K\bigg(\frac{\X(s_2)-y}{h}\bigg)^{\!\!\top} \nabla K\bigg(\frac{\X(s_3)-y}{h}\bigg) \\
&\hspace{1in} \times f(y) p(s)p(s_1)p(s_2)p(s_3)dydsds_1ds_2ds_3 \\
&=h^d\int_{\bbR^{d+4}} \nabla K(z)^\top \nabla K\bigg(z+\frac{\X(s_1)-\X(s)}{h}\bigg) \\
&\hspace{0.5in} \times \nabla K\bigg(z+\frac{\X(s_2)-\X(s)}{h}\bigg)^{\!\!\top} \nabla K\bigg(z+\frac{\X(s_3)-\X(s)}{h}\bigg) \\
&\hspace{1in} \times f(\X(s)-hz) p(s)p(s_1)p(s_2)p(s_3) dzdsds_1ds_2ds_3 \\
&=h^{d+3}\int_{\bbR^{d+4}}\nabla K(z)^\top \nabla K\bigg(z+\frac{\X(s+\tau_1h)-\X(s)}{h}\bigg)\\
&\hspace{0.5in}\times\nabla K\bigg(z+\frac{\X(s+\tau_2h)-\X(s)}{h}\bigg)^{\!\!\top}\nabla K\bigg(z+\frac{\X(s+\tau_3h)-\X(s)}{h}\bigg) \\
&\hspace{1in}\times f(\X(s)-hz) p(s)p(s+\tau_1h)p(s+\tau_2h)p(s+\tau_3h)dzdsd\tau_1d\tau_2d\tau_3. \label{bbE E 4}
\end{align}
As we saw earlier in \eqref{tau bounded}, the integral is in fact over the bounded set $\ball(0,1) \times [0,T] \times [-\gamma/2,\gamma/2]^3$, so that, by Jensen's inequality, and using \aspref{f} and \aspref{K},
\begin{align}\label{Raw4thM}
\bbE\|\cE(p)\|^4 = O(h^{d+3}) \|p\|_\infty^4.
\end{align}
We then use the fact that, by the triangle inequality, the convexity of the mapping of a real number to its fourth power, and Jensen's inequality,
\begin{align}
\bbE \|\cE(p)-\bbE\cE(p)\|^4
&\leq \bbE\big[\big(\|\cE(p)\| + \|\bbE\cE(p)\|\big)^4\big] \\
&\le 2^3(\bbE\|\cE(p)\|^4+\|\bbE\cE(p)\|^4) \\
&\leq 2^4\bbE\|\cE(p)\|^4, \label{Center4thM}
\end{align}
so that, 
\begin{align*}
\left(\frac{1}{\sqrt{nh^{d+1}}}\right)^4 \sum_{j=1}^n\bbE \|\cE_j(p)-\bbE\cE_j(p)\|^4
= \frac1{n h^{2d+2}} O(h^{d+3}) \|p\|_\infty^4 
\rightarrow 0.
\end{align*}
With Lyapunov's condition verified, we obtain the asymptotic normality of $\sqrt{nh^{d+1}}(\wt\cJ(p)-\bbE\wt\cJ(p))$, and because of \eqref{J bias}, $\sqrt{nh^{d+1}}(\wt\cJ(p)-\cJ(p))$ converges to the same normal limit. 

Now, for a multivariate marginal of arbitrary dimension $m \ge 1$, for any $p_1,\dots,p_m\in \L$ and $a_1,\dots,a_m\in \bbR$, $\sum_{i=1}^m a_ip_i\in\L$, and hence $\sqrt{nh^{d+1}}\sum_{i=1}^m a_i(\wt\cJ(p_i)-\cJ(p_i))$ is asymptotically normal. Hence, by the Cramér--Wold device, the joint distribution of $\sqrt{nh^{d+1}} (\wt\cJ(p_1),\cdots,\wt\cJ(p_m))$ is asymptotically normal. In particular, for any $m \ge 1$ and any $t_1, \dots, t_m \in [0,T]$, $(\Vt(p_{t_1}),\cdots,\Vt(p_{t_m}))$ is asymptotically normal.

\subsection{Asymptotic Equicontinuity}
From \eqref{J bias} and the fact that $\|p_t\|_\infty = 1$ for all $t$, we have
\begin{align}\label{BiasUniform}
\sup_{t\in[0,T]}\Big\|\sqrt{nh^{d+1}}\big(\bbE\wt\cJ(p_t)-\cJ(p_t)\big)\Big\|\rightarrow 0 \quad \textrm{as} \quad n\rightarrow\infty,
\end{align}
so that it suffices to establish the asymptotic equicontinuity of
\begin{align}\label{ZetaPro}
\hat\varsigma(t):=\sqrt{nh^{d+1}}\Big(\wt\cJ(p_t)-\bbE\wt\cJ(p_t)\Big).
\end{align}
We begin with bounding the fourth moment as follows
\begin{align}
&\bbE\big\|\wt\cJ(p)-\bbE\wt\cJ(p)\big\|^4\\
&\qquad = \frac{1}{n^4h^{4d+4}}\bbE\bigg\|\sum_{j=1}^n\Big(\cE_j(p)-\bbE\cE_j(p)\Big)\bigg\|^4\\
&\qquad = \frac{1}{n^4h^{4d+4}}\Big[n(n-1)\Big(\bbE\|\cE(p)-\bbE\cE(p)\|^2\Big)^2+n\bbE\|\cE(p)-\bbE\cE(p)\|^4\Big].
\end{align}
First,
\[\bbE\|\cE(p)-\bbE\cE(p)\|^2
\le \bbE\|\cE(p)\|^2 
= \trace \bbE\big[\cE(p)\cE(p)^\top\big],
\]
and by \eqref{bbE E E} and \eqref{tau bounded}, and the Cauchy--Schwarz inequality at the end, 
\begin{align}
&\trace \bbE \big[\cE(p)\cE(p)^\top\big] \\
&=h^{d+1} \int_0^T\!\!\! \int_{-2/\gamma}^{2/\gamma}\!\!\! p(s)p(s+\tau h)\bigg[\int_{\ball(0,1)}\!\!\!\!\!\! \nabla K(z)^\top \nabla K\bigg(\frac{\X(s+\tau h)-\X(s)}{h}+z\bigg) f(\X(s)-hz)dz\bigg] d\tau ds \\
&=O(h^{d+1}) \int_0^T \int_{-2/\gamma}^{2/\gamma}\!\!\! |p(s)p(s+\tau h)| d\tau ds \\
&=O(h^{d+1}) \int_{-2/\gamma}^{2/\gamma} \left(\int_0^T p(s)^2 ds\right)^{1/2} \left(\int_0^T p(s+\tau h)^2 ds\right)^{1/2} d\tau \\
&=O(h^{d+1}) \int_0^T p(s)^2 ds.
\end{align}
Second, as we already saw in \eqref{Center4thM},
\[\bbE\|\cE(p)-\bbE\cE(p)\|^4 
= O(1)\, \bbE\|\cE(p)\|^4,\]
and by \eqref{bbE E 4} and the fact that the integral is as described just below that display,  
\begin{align}
&\bbE \|\cE(p)\|^4 \\
&=h^{d+3} \int_{-2/\gamma}^{2/\gamma} \int_{-2/\gamma}^{2/\gamma} \int_{-2/\gamma}^{2/\gamma} \int_0^T \int_{\ball(0,1)} \nabla K(z)^\top \nabla K\bigg(z+\frac{\X(s+\tau_1h)-\X(s)}{h}\bigg)\\
&\hspace{0.5in}\times\nabla K\bigg(z+\frac{\X(s+\tau_2h)-\X(s)}{h}\bigg)^{\!\!\top}\nabla K\bigg(z+\frac{\X(s+\tau_3h)-\X(s)}{h}\bigg) \\
&\hspace{1in}\times f(\X(s)-hz) p(s)p(s+\tau_1h)p(s+\tau_2h)p(s+\tau_3h)dzdsd\tau_1d\tau_2d\tau_3 \\
&=O(h^{d+3}) \int_{-2/\gamma}^{2/\gamma} \int_{-2/\gamma}^{2/\gamma} \int_{-2/\gamma}^{2/\gamma} \int_0^T \big|p(s)p(s+\tau_1h)p(s+\tau_2h)p(s+\tau_3h)\big| dsd\tau_1d\tau_2d\tau_3 \\
&=O(h^{d+3}) \int_{-2/\gamma}^{2/\gamma} \int_{-2/\gamma}^{2/\gamma} \int_{-2/\gamma}^{2/\gamma} \left(\int_0^T p(s)^4 ds\right)^{1/4} \left(\int_0^T p(s+\tau_1 h)^4 ds\right)^{1/4} \\
&\hspace{1in} \times \left(\int_0^T p(s+\tau_2 h)^4 ds\right)^{1/4} \left(\int_0^T p(s+\tau_3 h)^4 ds\right)^{1/4} ds d\tau_1d\tau_2d\tau_3 \\
&=O(h^{d+3}) \int_0^T p(s)^4 ds.
\end{align}
We have thus found that 
\begin{align}\label{FourthMo}
\bbE\big\|\wt\cJ(p)-\bbE\wt\cJ(p)\big\|^4
= \frac{1}{n^4 h^{4d+4}}\Big[n^2 O(h^{2d+2}) \|p\|_2^4 + n O(h^{d+3}) \|p\|_4^4\Big].
\end{align}

Now, for $t_1, t_2 \in [0,T]$, given that
\[\|p_{t_1} - p_{t_2}\|_2^2 = \|p_{t_1} - p_{t_2}\|_4^4 = |t_1-t_2|,\] 
we have 
\begin{align}\label{FourthMoDiff}
\bbE\|\hat\varsigma(t_1)-\hat\varsigma(t_2)\|^4
&= n^2h^{2d+2}\bbE\|\wt\cJ(p_{t_1}-p_{t_2}) - \bbE \wt\cJ(p_{t_1}-p_{t_2})\|^4\\
&= \frac{O(1)}{n^2h^{2d+2}} \Big[n^2h^{2d+2} \|p_{t_1} - p_{t_2}\|_2^4 + nh^{d+3} \|p_{t_1} - p_{t_2}\|_4^4\Big]\\
&= O(1) \bigg[|t_1-t_2|^2+\frac{1}{nh^{d-1}}|t_1-t_2|\bigg].
\end{align}
The second term on the RHS complicates things. (Compare with Example 2.2.12 in~\cite{van1996weak}.) We do have 
\[\bbE\|\hat\varsigma(t_1)-\hat\varsigma(t_2)\|^4 
=O(1) |t_1-t_2|^2, \quad \text{for all $t_1, t_2$ such that $|t_1-t_2|\geq 1 / (nh^{d-1})$.}\]
Introduce the grid $\cA=\{k/(nh^{d-1}): k=1,\dots, \lfloor nh^{d-1} T\rfloor\}$.
Following the standard Kolmogorov-type chaining argument, we obtain that, for all $\epsilon>0$
\begin{align}\label{EquiCont1}
\lim_{\delta\rightarrow0}\limsup_{n\rightarrow\infty}\P\bigg\{\sup_{t_1,t_2\in \cA,|t_1-t_2|\leq\delta}\|\hat\varsigma(t_1)-\hat\varsigma(t_2)\|\geq\epsilon\bigg\}=0.
\end{align}
We handle pairs of $t_1, t_2$ such that $|t_1-t_2| \le 1 / (nh^{d-1})$ by discretizing the space $[0,T]$. For that, introduce a rounding map $\pi: [0,T] \mapsto \cA$ satisfying $|t-\pi(t)|\leq1/(nh^{d-1})$ for any $t\in [0,T]$.
We have, for any $p \in \L$,
\begin{align}\label{wtcJ}
\big\|\wt\cJ(p) - \bbE \wt\cJ(p)\big\| 
&\le \int_0^T |p(s)| \big\|\nabla \fh(\X(s)) - \bbE[\nabla \fh(\X(s))]\big\| ds \\
&\le \|p\|_1\ \hat\eta_1^{\rm sd},
\end{align}
which via \eqref{etavar} implies that, for any $t_1, t_2 \in [0,T]$, almost surely,
\begin{align*}
\|\hat\varsigma(t_1)-\hat\varsigma(t_2)\|
&= \sqrt{n h^{d+1}} \big\|\wt\cJ(p_{t_1}-p_{t_2}) - \bbE \wt\cJ(p_{t_1}-p_{t_2})\big\| \\
&= \sqrt{n h^{d+1}} \|p_{t_1}-p_{t_2}\|_1\, O\big(\log(n)/(nh^{d+2})\big)^{1/2} \\
&= |t_1-t_2|\, O\big(\log(n)/h\big)^{1/2},
\end{align*}
which, in particular, gives
\begin{align}
\sup_{t\in[0,T]} \|\hat\varsigma(t)-\hat\varsigma(\pi(t))\|
= \frac1{nh^{d-1}}\, O\left(\frac{\log n}{h}\right)^{1/2}
= O\left(\frac{\sqrt{\log n}}{n h^{d-1/2}}\right) = o(1), \quad \text{a.s.},\label{EquiCont2}
\end{align}
by our assumptions on $h$.
With \eqref{EquiCont1}, \eqref{EquiCont2}, and the fact that
\begin{align*}
\|\hat\varsigma(t_1)-\hat\varsigma(t_2)\| 
&\leq\|\hat\varsigma(t_1)-\hat\varsigma(\pi(t_1))\|+\|\hat\varsigma(\pi(t_1))-\hat\varsigma(\pi(t_2))\|+\|\hat\varsigma(t_2)-\hat\varsigma(\pi(t_2))\|,
\end{align*}
we obtain 
\begin{align*}
\lim_{\delta\rightarrow0}\limsup_{n\rightarrow\infty}\P\bigg\{\sup_{t_1,t_2\in [0,T],|t_1-t_2|\leq\delta}\|\hat\varsigma(t_1)-\hat\varsigma(t_2)\|\geq\epsilon\bigg\}=0,
\end{align*}
establishing the asymptotic equicontinuity of $\hat\varsigma$.

\section{Plugin confidence regions}
\label{sec:plugin}

\subsection{Pivotal construction}
\label{sec:plugin pivotal}

Assume that the conditions underlying \thmref{weak} hold.
The obvious approach to obtaining a confidence region for $\X$ (as an element of $\cC[0,T]$) is to use $\Xh$ as pivot via the supnorm. This requires the derivation, in fact, estimation of the distribution of $\|\Xh - \X\|_T$. Under the same conditions as \thmref{weak}, the Continuous Mapping Theorem gives
\begin{align} \label{weak supnorm}
\sqrt{n h^{d+1}} \|\Xh - \X\|_T \longto Q := \|\U\|_T, \quad n \to \infty, \text{ in distribution}.
\end{align}
In particular, for any $\alpha \in (0,1)$, if $q_{1-\alpha}$ denotes the $(1-\alpha)$-quantile of $Q$,
\begin{align}
\P\Big(\sqrt{n h^{d+1}} \|\Xh - \X\|_T \le q_{1-\alpha}\Big) \longto 1-\alpha.
\end{align}
In other words, if
\[\widetilde\cR_{1-\alpha} = \big\{\Y \in \cC[0,T] : \|\Xh - \Y\|_T \le q_{1-\alpha}\big\},\]
then 
\begin{align}
\P\big(\X \in \widetilde\cR_{1-\alpha}\big) \longto 1-\alpha.
\end{align}
The issue, of course, is that we do not know the distribution of $Q$ because the process $\U$ is defined based on the density $f$. 

\begin{remark}
Although not directly useful in our context, we note that 
\[\{Q < q\} = \{\|\U(t)\| < q,\, \forall t \le T\} = \{\tau_q > T\},\]
if $\tau_q$ is the first exit time of $\U$ from $\ball(0,q)$, i.e., $\tau_q = \inf\{t : \|\U(t)\| \ge q\}$, and that if $\psi(s,y) = \P(\tau_q > T \mid \U(s) = y)$, then $\psi$ satisfies the following {\em Kolmogorov backward equation}
\begin{align}\label{backward equation}
-\psi'(s, y) = y^\top \nabla^2 f(\X(s)) \nabla \psi(s, y) + \frac{1}{2} \trace \left[ B(s)^2 \, \nabla^2 \psi(s, y) \right],
\end{align}
for $0 \le s < T$, with terminal condition $\psi(T, y) = \1\{y \in \ball(0,q)\}$.
\end{remark}

The plugin approach to estimating the distribution of $Q$ in \eqref{weak supnorm} consists in `estimating' the limiting process $\U$ defined by \eqref{SDEMain}. The obvious way to do that is by plugin, replacing $f$ with $\fh$ and $\X$ by $\Xh$ in that SDE, thus `estimating' $\U$ by the solution, $\Uh$, of
\begin{align}\label{SDEMain hat}
d\Uh(t)
= \nabla^2 \fh(\Xh(t))\Uh(t)dt + \bigg[\fh(\Xh(t)) \int_{\bbR}\int_{\bbR^d} \nabla K(z) \nabla K\big(\tau \nabla \fh(\Xh(t)) + z\big)^\top dz d\tau\bigg]^{1/2}dW(t),
\end{align}
with initial condition $\Uh(0)=0$.
Here too, we can express $\Uh$ more directly as follows
\begin{align}\label{U hat}
\Uh(t) = \int_0^t \wh M(t) \wh M(s)^{-1} \wh B(s) dW(s),
\end{align}
where
\begin{align}\label{A hat}
\wh A(t) = \nabla^2 \fh(\Xh(t)), \qquad
\wh M'(t) = \nabla^2 \fh(\Xh(t)) \wh M(t), \quad \text{with $\wh M(0) = I$},
\end{align}
and 
\begin{align}\label{B hat}
\wh B(t) = \bigg[\fh(\Xh(t)) \int_{\bbR}\int_{\bbR^d} \nabla K(z) \nabla K\big(\tau \nabla \fh(\Xh(t)) + z\big)^\top dz d\tau\bigg]^{1/2}.
\end{align}

Because $\hat\eta_2 = o(1)$ under our conditions on $h$, $\nabla^2 \fh \to \nabla^2 f$ uniformly (a.s.), and because $\Xh \to \X$ uniformly on $[0,T]$ (remember \eqref{consistent T}), we have almost surely
\[\sup_{t \in [0,T]} \|\nabla^2 \fh(\Xh(t)) - \nabla^2 f(\X(t))\| \to 0.\]
Then, by standard stability arguments making use of the fact that $\nabla^2 f$ is Lipshitz (by \aspref{f}), we get that 
\begin{align}\label{M conv}
\|\wh M - M\|_T \longto 0, \quad \text{almost surely},
\end{align}
which then implies that
\begin{align}\label{M conv inv}
\|\wh M^{-1} - M^{-1}\|_T \longto 0, \quad \text{almost surely},
\end{align}
by the fact that $M$ is continuous with $M(t)$ invertible for all $t \ge 0$ \cite[Lem 3.11]{teschl2012ordinary}.

Because $\hat\eta_1 = o(1)$ under our conditions on $h$, $\nabla \fh \to \nabla f$ uniformly (a.s.), and because $\Xh \to \X$ uniformly on $[0,T]$ (remember \eqref{consistent T}), we have almost surely
\[\sup_{t \in [0,T]} \|\nabla \fh(\Xh(t)) - \nabla f(\X(t))\| \to 0.\] 
Recalling, as we saw in the proof of \thmref{weak}, that the double integral in \eqref{B} is in fact over $[-2/\gamma,2/\gamma] \times \ball(0,1)$, with $\gamma$ defined in \eqref{gamma}, one can argue in the same way that the double integral in \eqref{B hat} is in fact over $[-2/\hat\gamma,2/\hat\gamma] \times \ball(0,1)$, for some $\hat\gamma$ that satisfies \eqref{gamma}, but with $\Xh$ in place of $\X$, and that $\hat\gamma$ is a.s.~bounded from below away from~0. 
Therefore, by dominated convergence (and the continuity of $\nabla K$, which is part of \aspref{K}), 
\begin{align}\label{B conv}
\|\wh B - B\|_T \longto 0, \quad \text{almost surely}.
\end{align}

With \eqref{M conv}-\eqref{M conv inv} and \eqref{B conv}, we are able to conclude that
\begin{align}\label{U conv}
\Uh \longto \U, \quad \text{in distribution}.
\end{align}
Then, by the continuity mapping theorem,
\begin{align}\label{Q conv}
\wh Q = \|\Uh\|_T \longto \|\U\|_T = Q, \quad \text{in distribution},
\end{align}
and in particular, in $\hat q_{1-\alpha}$ denotes the $(1-\alpha)$-quantile of $\wh Q$, 
\begin{align}
\hat q_{1-\alpha} \longto q_{1-\alpha}, \quad \text{in probability}.
\end{align}
In affirming this, we are using the fact that $Q$ has a continuous distribution function which is strictly increasing over its support, which we establish in \lemref{Q quant}.
Continuing, by the delta method,
\begin{align}
\P\Big(\sqrt{n h^{d+1}} \|\Xh - \X\|_T \le \hat q_{1-\alpha}\Big) \longto 1-\alpha.
\end{align}
Therefore, if we define the confidence region
\begin{align}\label{conf region plugin}
\wh\cR_{1-\alpha}^{\rm plugin} = \Big\{\Y \in \cC[0,T] : \|\Xh - \Y\|_T \le \hat q_{1-\alpha}/\sqrt{n h^{d+1}}\Big\},
\end{align}
it satisfies
\begin{align}\label{conf region plugin level}
\P\big(\X \in \wh\cR_{1-\alpha}^{\rm plugin}\big) \longto 1-\alpha.
\end{align}
We encapsulate the end product of our derivations into the following statement.
\begin{proposition}
Under the conditions \thmref{weak}, $\wh\cR_{1-\alpha}^{\rm plugin}$ defined in \eqref{conf region plugin} is a confidence region for $\X$ with asymptotic level $1-\alpha$.
\end{proposition}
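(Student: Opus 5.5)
The plan is to assemble the ingredients derived just above into a Slutsky-type argument, conditioning on the sample when handling $\wh Q$. Write $T_n = \sqrt{nh^{d+1}}\|\Xh - \X\|_T$. By \thmref{weak} and the Continuous Mapping Theorem (the supnorm is continuous on $\cC[0,T]$), $T_n \to Q = \|\U\|_T$ in distribution, as in \eqref{weak supnorm}. The coverage event is exactly $\{\X \in \wh\cR_{1-\alpha}^{\rm plugin}\} = \{T_n \le \hat q_{1-\alpha}\}$. So it suffices to show two things: that $\hat q_{1-\alpha} \to q_{1-\alpha}$ in probability, and that $F_Q$ is continuous at $q_{1-\alpha}$. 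Slutsky then gives $T_n - \hat q_{1-\alpha} \to Q - q_{1-\alpha}$ in distribution, and hence $\P(T_n \le \hat q_{1-\alpha}) \to \P(Q \le q_{1-\alpha}) = 1-\alpha$.

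For the quantile consistency I will work conditionally on the data. Fix an outcome in the probability-one event on which \eqref{M conv}, \eqref{M conv inv} and \eqref{B conv} hold. Conditionally, $\Uh$ in \eqref{U hat} is a centered Gaussian process driven by an independent Brownian motion $W$. I will realize $\Uh$ and $\U$ on the same $W$. Then
\[
\Uh(t) - \U(t) = \int_0^t \big[\wh M(t)\wh M(s)^{-1}\wh B(s) - M(t)M(s)^{-1}B(s)\big] dW(s).
\]
Write the bracket as $\wh M(t) G(s) - M(t) H(s)$, with $G = \wh M^{-1}\wh B$ and $H = M^{-1}B$. This splits the difference into two terms: $(\wh M(t) - M(t)) \int_0^t G\,dW$ and $M(t)\int_0^t (G - H)\,dW$.

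I will bound both terms uniformly in $t$. For the first, $\|\wh M - M\|_T \to 0$, and $\sup_t \|\int_0^t G\,dW\|$ is tight because $\|G\|_T$ is bounded. For the second, Doob's $L^2$ maximal inequality gives $\E\sup_t\|\int_0^t (G-H)\,dW\|^2 \le 4\int_0^T \|G-H\|^2\,ds \to 0$. Together these give $\|\Uh - \U\|_T \to 0$ in probability conditionally. Hence $\wh Q \to Q$ in conditional distribution almost surely, which makes \eqref{Q conv} precise.

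The main obstacle is the passage from convergence of distribution functions to convergence of quantiles. It needs $F_Q$ to be continuous and strictly increasing at $q_{1-\alpha}$, and this is the content of \lemref{Q quant}, which I take as given. Granting it, for each $\epsilon > 0$ we have $F_Q(q_{1-\alpha} - \epsilon) < 1-\alpha < F_Q(q_{1-\alpha} + \epsilon)$. Conditional convergence of $\wh Q$ then forces $|\hat q_{1-\alpha} - q_{1-\alpha}| \le \epsilon$ eventually, almost surely, which is stronger than convergence in probability. Since $\hat q_{1-\alpha}$ is measurable with respect to the sample, the Slutsky step of the first paragraph applies with $T_n$ and $\hat q_{1-\alpha}$ defined on the same space. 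Continuity of $F_Q$ at $q_{1-\alpha}$ gives $\P(Q \le q_{1-\alpha}) = 1-\alpha$ exactly, which yields \eqref{conf region plugin level} and proves the proposition.
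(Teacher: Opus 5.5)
Your proposal is correct and follows essentially the same route as the paper: weak convergence of $\sqrt{nh^{d+1}}\|\Xh-\X\|_T$ to $Q$ via \thmref{weak} and the Continuous Mapping Theorem, consistency of $\hat q_{1-\alpha}$ through $\Uh \to \U$ combined with \lemref{Q quant}, and a final Slutsky step (which the paper loosely calls the delta method). Your coupling of $\Uh$ and $\U$ on a common Brownian motion, controlled with Doob's maximal inequality, makes rigorous the paper's one-line claim \eqref{U conv}, read conditionally on the sample; it also gives almost sure rather than in-probability convergence of the quantile.
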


\subsection{Studentized pivotal constructions}
\label{sec:plugin student}

A Studentized construction would be attractive since $\|\Xh(t) - \X(t)\|$ tends to increase as $t$ increases. However, it is not nearly as straightforward as the simple pivotal construction just described in \secref{plugin pivotal}.

The stochastic process $\U$ given in \eqref{U} has mean zero and covariance matrix 
\begin{align}\label{Sigma}
\Sigma(t) = \Cov \U(t) = M(t) \bigg[\int_0^t M(s)^{-1} B(s)^2 M(s)^{-\top} ds \bigg] M(t)^\top.
\end{align}
The path towards Studentization that one would first try starts with considering the standardization $\Sigma(t)^{-1/2}(\Xh(t)-\X(t))$ and then the Studentization $\wh\Sigma(t)^{-1/2}(\Xh(t)-\X(t))$, estimating $\Sigma$ by plugin
\begin{align}\label{Sigma hat}
\wh\Sigma(t) = \Cov\big(\Uh(t) \mid \fh\big) = \wh M(t) \bigg[\int_0^t \wh M(s)^{-1} \wh B(s)^2 \wh M(s)^{-\top} ds \bigg] \wh M(t)^\top,
\end{align}
following \eqref{A hat} and \eqref{B hat}. 
This plugin estimate is indeed consistent, as by \eqref{M conv}, \eqref{M conv inv}, and \eqref{B conv}, and by the Continuous Mapping Theorem, in probability,
\begin{align}\label{Sigma conv}
\|\wh\Sigma-\Sigma\|_T \to 0.
\end{align}
However, already the standardization above comes with issues. The fact that $\Sigma(0) = 0$ is not the only one, as looking at the integrand, $M(t)$ is non-singular, but $B(t)$ is singular (\lemref{B psd}). 
Nonetheless, as we argue in \secref{Sigma singular} in the case of a spherically symmetric kernel function $K$, generically, $\Sigma(t)$ is nonsingular for any $t > 0$. But even when this is the case, the standardization remains problematic because of what happens in the neighborhood of $t = 0$, where the leading term in $\Sigma(t)$ is singular in that $\Sigma(t) \sim t B(0)^2$ as $t\to 0$. For more details, see \secref{Sigma t=0}. 
This leaves little hope for a simple plugin Studentization to work. 

Below, we propose a number of alternatives.  

\subsubsection{Regularized Studentization}
A vanilla regularization of the plugin covariance matrix estimator is viable and results in a Studentization of the form $(\wh\Sigma(t) + \sigma_0 I)^{-1/2}(\Xh(t) - \X(t))$ for some (user-specified) $\sigma_0 > 0$. Following similar arguments, by the Continuous Mapping Theorem, under the condition of \thmref{weak},
\begin{align} \label{weak supnorm student}
\sqrt{n h^{d+1}} (\wh\Sigma + \sigma_0 I)^{-1/2} (\Xh - \X) \longweak (\Sigma + \sigma_0 I)^{-1/2} U;
\end{align}
and, at the same time, by another application of the Continuous Mapping Theorem, using \eqref{U conv} and \eqref{Sigma conv}, 
\begin{align}
(\wh\Sigma + \sigma_0 I)^{-1/2} \Uh \longweak (\Sigma + \sigma_0 I)^{-1/2} \U.
\end{align}
Therefore, as $n \to \infty$,
\begin{align}
\sqrt{n h^{d+1}} \big\|(\wh\Sigma + \sigma_0 I)^{-1/2} (\Xh - \X)\big\|_T \stackrel{\text{approx}}{\sim} \big\|(\wh\Sigma + \sigma_0 I)^{-1/2} \Uh\big\|_T,
\end{align}
which can be used, in the same way we did before, to derive an asymptotically exact $(1-\alpha)$-confidence region for $\X$ on $[0,T]$.  

\subsubsection{Spherically weighted process}
\label{sec:spherically weighted}
As we stated earlier, generically, $\Sigma(t)$ is non-singular for all $t>0$. When this is the case, $\sigma(t) := \sqrt{\trace \Sigma(t)} > 0$ for all $t > 0$, although it remains the case that $\sigma(0) = 0$. Ignoring this for a moment, a different departure point is the standardization $\sigma(t)^{-1}(\Xh(t)-\X(t))$, resulting in  and then the Studentization 
\begin{align}
\label{spherical student}
\text{$\hat\sigma(t)^{-1}(\Xh(t)-\X(t))$, \quad based on the plugin estimator $\hat\sigma(t)^2 := \trace \wh\Sigma(t)$}.
\end{align}
This is instead motivated by the standardization of $\|\Xh(t)-\X(t)\|$, and it is indeed the case, based on \eqref{Sigma conv}, that
\[\|\hat\sigma - \sigma\|_T \to 0,\]
and in particular, for any $t > 0$, 
\[\sqrt{nh^{d+1}} \big\|\hat\sigma(t)^{-1}(\Xh(t)-\X(t))\big\| \weak \|\sigma(t)^{-1} U(t)\|, \qquad \text{with } \E\big[\|\sigma(t)^{-1} U(t)\|^2\big] = 1.\]
However, although there is no longer any issue with rank deficiency,   there is still an issue at $t = 0$ that results in $\sup_{t \in [0,T]} \|\sigma(t)^{-1} U(t)\| = 0$ a.s.. This is a well-known behavior of solutions to SDEs, which here boils down to the approximation $\sigma(t)^2 \sim \trace[t B(0)^2]$, so that $\sigma(t) \asymp \sqrt{t}$, implying $U(t) = B(0) W(t) + o_P(\sqrt{t})$, and it is a classical fact that, almost surely,
\begin{align}
\label{Wt unbounded}
\sup_{t \in (0,T]} \frac{\|W(t)\|}{\sqrt{t}} = \infty.
\end{align}
Therefore, the plugin Studentization \eqref{spherical student} cannot work as is.

However, it is also a classical fact that, almost surely,
\begin{align}
\label{Wt bounded}
\limsup_{t \to 0^+} \frac{\|W(t)\|}{\sqrt{t \log\log(1/t)}} = \sqrt{2},
\end{align}
implying that, for any deterministic weight function $\rho : [0,T] \to (0, \infty)$ that is continuous and such that $\rho(t) \gg \sqrt{t \log\log(1/t)}$ as $t \to 0$, $U/\rho$ is a continuous process on $[0,T]$ (with value $0$ at the origin), which in particular satisfies, almost surely,
\[\sup_{t \in [0,T]} \frac{\|U(t)\|}{\rho(t)} \asymp 1.\]
The idea then is to choose such a weight function and consider the following weighted process
\begin{align}
\label{spherical weighted}
\rho(t)^{-1}(\Xh(t)-\X(t)).
\end{align}
Since this is a continuous process on $[0,T]$ (with value $0$ at the origin) and $U/\rho$ is a continuous process on $[0,T]$, by the Continuous Mapping Theorem, as continuous processes on $[0,T]$, 
\begin{align}
\sqrt{n h^{d+1}} \rho^{-1} (\Xh-\X) \longweak \rho^{-1} U,
\end{align}
resulting in 
\begin{align}
\sqrt{n h^{d+1}} \big\|\rho^{-1} (\Xh-\X)\big\|_T \longweak \big\|\rho^{-1} U\big\|_T.
\end{align}
It is also the case that 
\begin{align}
\big\|\rho^{-1} \Uh \big\|_T \longweak \big\|\rho^{-1} U\big\|_T,
\end{align}
so that, as $n \to \infty$, 
\begin{align}
\sqrt{n h^{d+1}} \big\|\rho^{-1} (\Xh-\X)\big\|_T \stackrel{\text{approx}}{\sim} \big\|\rho^{-1} \Uh\big\|_T.
\end{align}
The resulting confidence region takes the form
\begin{align}\label{conf region spherical weighted}
\wh\cR_{1-\alpha} = \Big\{\Y \in \cC[0,T] : \|\Xh(t) - \Y(t)\| \le \rho(t) \hat q_{1-\alpha}/\sqrt{n h^{d+1}},\, \forall t \in [0,T]\Big\},
\end{align}
where $\hat q_\alpha$ denotes the $\alpha$-quantile of $\big\|\rho^{-1} \Uh\big\|_T$. The discussion leading to this definition establishes that this region has size $1-\alpha$ in the large sample limit.

\begin{remark}
Although random, it is possible and tempting to choose $\rho$ of the form $\rho(t) = \hat\sigma(t) r(t)$ with $r: [0,T] \to (0, \infty)$ a continuous and deterministic function such that $r(t) \gg \sqrt{\log\log(1/t)}$ as $t \to 0$. Even then, there is no `canonical' choice for $r$. In general, a choice of $\rho$ results in a particular thickness profile for the confidence band.
\end{remark}

\begin{theorem}
\label{thm:weighted_convergence}
Assume the conditions of \thmref{weak} hold. 
Let $\rho(t)$ be a continuously differentiable, positive, and bounded function on $(0, T]$, satisfying $\rho(t) \gg \sqrt{t \log \log (1/t)}$ and $|t \rho'(t) / \rho(t)| = O(1)$ as $t \downarrow 0$.  
Then, the sequence of stochastic process $\sqrt{nh^{d+1}}(\Xh-\X)/\rho$ converges weakly in the space $\cC[0,T]$ to the Gaussian process $\U/\rho$, where $\U$ is the same process as defined in \thmref{weak}.
\end{theorem}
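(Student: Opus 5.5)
The plan is to follow the architecture of the proof of \thmref{weak}, while handling the singularity of $1/\rho$ at $t=0$ by a split into a fixed interval $[\delta,T]$ and a small interval $[0,\delta]$. On $[\delta,T]$ the map $u \mapsto u/\rho$ is continuous from $\cC[\delta,T]$ to itself, because $\rho$ is continuous and bounded below there. \thmref{weak} and the Continuous Mapping Theorem therefore give weak convergence of $\sqrt{nh^{d+1}}(\Xh-\X)/\rho$ to $\U/\rho$ on $[\delta,T]$, for every fixed $\delta>0$. The finite dimensional marginals at times $t>0$ follow at once, and at $t=0$ both processes vanish (for $\U/\rho$, by \eqref{Wt bounded} applied through $\U(t)=B(0)W(t)+o_P(\sqrt t)$). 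It then suffices, by a standard approximation argument (e.g.\ \cite[Th~1.9.5]{van1996weak} or Billingsley's Theorem 3.2), to show two things. First, $\sup_{t\le\delta}\|\U(t)\|/\rho(t)\to 0$ a.s.\ as $\delta\to 0$; this is the LIL remark above, since $\U(t)=\int_0^t M(t)M(s)^{-1}B(s)dW(s)$ is a time-changed Brownian motion to first order. Second, the small-time condition
\[\lim_{\delta\to0}\limsup_{n\to\infty}\P\Big(\sup_{0<t\le\delta}\sqrt{nh^{d+1}}\,\|\Xh(t)-\X(t)\|/\rho(t)>\epsilon\Big)=0.\]

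For the small-time condition I would use \prpref{decompo}, which writes $\Xh-\X=\Zt+\Dt$. I would apply the bound \eqref{decompo remainder} on $[0,t]$ instead of $[0,T]$ and combine it with \lemref{XConsist}, $\|\Xh-\X\|_t\le \hat\eta_1 t e^{\kappa_2 t}$. This gives $\|\Dt\|_t=O(t^2)(\hat\eta_1\hat\eta_2+\hat\eta_1^2)$. Since $t^2/\rho(t)\le t\cdot t/\sqrt{t}\to0$, and $\sqrt{nh^{d+1}}(\hat\eta_1\hat\eta_2+\hat\eta_1^2)=o(1)$ a.s.\ under the bandwidth assumptions (this is the same bookkeeping already used for \thmref{weak}), the remainder is negligible uniformly. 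The bias part is handled the same way: $\sqrt{nh^{d+1}}\,\|\E\wt\cJ(p_t)-\cJ(p_t)\|\le o(1)\,t$ by \eqref{J bias}, because $\|p_t\|_1=t$, and $t/\rho(t)\to0$. Propagating through $\mathscr U$ preserves these bounds up to the factor $e^{\kappa_2T}$: Gr\"onwall on $[0,t]$ gives $\|\mathscr U v\|_t\le e^{\kappa_2 t}\|v\|_t$, so sup-norms over $[0,t]$ are controlled by sup-norms of the input over $[0,t]$. It thus remains to control $\sup_{t\le\delta}\|\hat\varsigma\|_t/\rho(t)$, with $\hat\varsigma$ as in \eqref{ZetaPro}.

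The main obstacle is this weighted maximal inequality for $\hat\varsigma$ near $0$. I would use dyadic blocks $I_k=(2^{-k-1}\delta,2^{-k}\delta]$. The hypothesis $|t\rho'(t)/\rho(t)|=O(1)$ makes $\rho$ comparable to $\rho(2^{-k}\delta)$ on each $I_k$, up to fixed constants. It is enough to bound $\P(\sup_{t\le 2^{-k}\delta}\|\hat\varsigma(t)\|>\epsilon\rho(2^{-k}\delta))$ and sum over $k$. On the chaining part I would apply the fourth moment bound \eqref{FourthMoDiff} on $[0,s]$ with $s=2^{-k}\delta\ge 1/(nh^{d-1})$. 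The Kolmogorov chaining inequality then gives $\E\sup_{t\le s}\|\hat\varsigma(t)\|^4\lesssim s^2$. The resulting $\sum_k s_k^2/\rho(s_k)^4$ diverges only like $\sum_k 1/(\log\log(1/s_k))^2$, so raw fourth moments are too weak. I would strengthen them to exponential tails, using a Bernstein inequality for the iid sum $\sum_i(\cE_i(p)-\E\cE_i(p))$, whose summands are bounded by $O(h)\|p\|_\infty$ with variance $O(h^{d+1})\|p\|_2^2$. This yields sub-Gaussian increments with variance proxy $|t_1-t_2|$, up to the scale $1/(nh^{d-1})$. A chaining bound of the form $\P(\sup_{t\le s}\|\hat\varsigma\|>x\sqrt s)\lesssim e^{-cx^2}$ follows. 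With $x_k=\epsilon\rho(s_k)/\sqrt{s_k}\gg\sqrt{\log\log(1/s_k)}=\sqrt{\log k}$, the sum $\sum_k e^{-cx_k^2}$ converges and tends to $0$ as $\delta\to0$.

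Blocks with $s_k<1/(nh^{d-1})$ would be handled through the almost sure Lipschitz bound from the proof of \thmref{weak}: $\|\hat\varsigma(t)\|\le t\,O(\sqrt{\log n/h})$. There I need $t\sqrt{\log n/h}/\rho(t)\le\sqrt t\sqrt{\log n/h}\to0$ for $t\le 1/(nh^{d-1})$, which holds since $\log n/(nh^{d})\to0$. Some care is needed to ensure that the Bernstein range condition is compatible with these thresholds; this is the step I expect to require the most checking.
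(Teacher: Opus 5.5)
Your overall architecture matches the paper's: Billingsley's approximation theorem with truncation at $\delta$, the decomposition of \prpref{decompo}, a bias bound of order $o(1)\,t$, and a reduction through $\mathscr U$ to the centered process near $0$. However, the remainder step does not work as written.

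Bounding $\|\Xh-\X\|_t$ by the crude $\hat\eta_1 t e^{\kappa_2 t}$ from \lemref{XConsist} loses a factor $\sqrt{\log n/h}$ relative to the true size $1/\sqrt{nh^{d+1}}$ of $\Xh-\X$. What you then need is $\sqrt{nh^{d+1}}\,\hat\eta_1\hat\eta_2\to0$. The rates of \lemref{etarate} give only $O(\log n/\sqrt{nh^{d+5}})$ for its stochastic part, and the hypotheses of \thmref{weak}, which impose only $nh^{d+4}/\log n\to\infty$, do not send this to zero. When $(m_0-1)\wedge(k_0+1)=2$, e.g.\ for any nonnegative kernel, they force $nh^{d+5}\to0$, so this bound diverges.

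This is also not the bookkeeping behind \thmref{weak}. There, $\Dt$ is negligible because \eqref{decompo remainder} makes it small \emph{relative to} $\|\Xh-\X\|_T$ itself. The repair is short. Applying \eqref{decompo remainder} on $[0,t]$ gives $\|\Dt\|_t\le t\,\Delta_n\|\Xh-\X\|_T$ with $\Delta_n=e^{\kappa_2T}(\hat\eta_2+\kappa_3\|\Xh-\X\|_T)\to0$ a.s. \thmref{weak} gives $\sqrt{nh^{d+1}}\|\Xh-\X\|_T=O_{\P}(1)$. Hence $\sup_{0<t\le T}\sqrt{nh^{d+1}}\|\Dt(t)\|/\rho(t)\le \sup_{0<t\le T}(t/\rho(t))\,\Delta_n\,O_{\P}(1)=o_{\P}(1)$. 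The paper does essentially this through the self-bounding inequality $(1-\wt\Delta_n)\sqrt{nh^{d+1}}\|(\Xh-\X)/\rho\|_\delta\le\|\Ut/\rho\|_\delta$.

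With that repaired, the rest is a sound argument that differs from the paper's at the key small-time step.

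\emph{The paper's route.} It keeps the kernel $\Phi(t,s)$ inside the function class $\{\beta_t\}$. It uses the a.s.\ Lipschitz bound only below a cutoff $t_n\ll h/\log n$, as you do below $1/(nh^{d-1})$. It couples the weighted supremum on $[t_n,\delta]$ to that of a Gaussian process $U_h$ via \lemref{Cor2.2}, then appeals to $U_h\weak\U$ and the LIL for $\U$.

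\emph{Your route.} You push everything through $\mathscr U$ onto the simpler process $\hat\varsigma$ and prove a uniform-in-$n$ LIL-type bound directly. The ingredients are Bernstein increments, chaining on dyadic blocks, and summability of $\exp(-c\epsilon^2\rho(s_k)^2/s_k)$, which is exactly where $\rho\gg\sqrt{t\log\log(1/t)}$ enters.

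\emph{What each buys.} Your route is more elementary. It also sidesteps the delicate last step of the paper's route: transferring the LIL from $\U$ back to $U_h$ needs control near $t=0$ that is uniform in $h$. When $\rho(t)\to0$, the weighted-supremum functional is not continuous for the sup norm, only lower semicontinuous, which gives the Portmanteau inequality in the wrong direction.

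\emph{The range condition you flag is harmless.} For $p=p_{t_1}-p_{t_2}$ one has $\|\cE(p)\|\le C\min(|t_1-t_2|,h)$ by \eqref{gamma}. So the linear part of Bernstein's bound only matters at levels of order $\sqrt{nh^{d+1}}\to\infty$, whereas your thresholds $\epsilon\rho(s_k)$ stay bounded.
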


The (lengthy) proof of this result is in \appref{weighted_convergence_proof}.

\section{Bootstrap confidence regions}
\label{sec:bootstrap}

We now consider an alternative approach to estimating the distribution of $Q$ based on a smoothed bootstrap. Such an approach is natural, but there is a catch: when using a higher-order kernel, it is possible that $\fh$ takes negative values, in which case it cannot play the role of sampling density in the bootstrap world. 
Instead, we correct it so that it is a proper density. There are multiple ways of doing that \cite{hall1993correcting}, and while any reasonable correction should work, we work with the one proposed in \cite{glad2003correction}, given by 
\begin{align}\label{fh boot}
\fhc(x) = \max\big\{0, \fh(x) - \hat c\big\},
\end{align}
where $\hat c \ge 0$ is the unique constant such that $\fhc$ integrates to one. 

However, we are immediately confronted with the difficulty that $\fhc$ may not even be differentiable, so that it is not even clear that it can be the base for a well-defined gradient ascent flow. And if we are able to overcome this difficulty, en route towards showing consistency, we would then have to deal with the fact that it does not satisfy \aspref{f}. Indeed, we would hope to prove that the bootstrap is consistent via what \citet[Sec 3.1.4]{shao1995jackknife} call the `imitation method' --- arguably the most direct way --- which would call for an application of \thmref{weak}.
Thankfully, it is not hard to show that \thmref{weak} applies under the weaker assumption of a density which is continuous on $\bbR^d$,  $m_0 \ge 3$ times continuously differentiable on $\{f > 0\}$, and, as before, converges to zero at infinity. The idea is to realize that, because the upper level sets are compact and a gradient ascent flow remains within an upper level set, that everything can be localized. In that case, after considering a fixed point $x_0$ with $t_0 = f(x_0) > 0$, the supremums in \eqref{kappa} and \eqref{eta} would be over the compact set 
\begin{align}
\label{S0}
\cS_0 = \{f \ge t_0/2\}.
\end{align}
In fact, everything happens in that set once $\{\fh \ge \hat t_0 = \fh(x_0)\} \subset \cS_0$, which happens eventually as $n\to\infty$ by the fact that $\hat\eta_0 \to 0$ under our assumptions. 
This line of reasoning is used in \cite{arias2016estimation}.

What makes the correction \eqref{fh boot} particularly compelling here is that, eventually as $n\to\infty$, $\fhc = \fh - \hat c$ on $\cS_0$. This comes from the obvious fact that $\hat c \to 0$, due to having $\hat\eta_0 \to 0$, and $\fhc(x) = \fh(x) - \hat c$ for any $x \in \{\fh \ge \hat t_0\}$. Thus, with probability one, when $n$ is large enough, not only is the gradient ascent flow of $\fhc$ originating at $x_0$ well-defined, it eventually coincides with that of $\fh$.  

We are now in a position to specify our proposal: It is to estimate the distribution of $\|\Xh - \X\|_T$ by the distribution of $\|\Xh^* - \Xh\|_T$ conditional on the sample $X_1, \dots, X_n$, where
\begin{align}\label{odemodel hat c}
\frac{d\Xh^*(t)}{dt} = \nabla \fh^*(\Xh^*(t)), \quad t\geq 0;\quad \Xh^*(0) = x_0,
\end{align}
and
\begin{align}\label{f hat c}
\fh^*(x) = \frac{1}{nh^d} \sum_{i=1}^n K\left( \frac{x-X^*_i}{h}\right),
\end{align}
with $X^*_1, \dots, X^*_n$ iid from $\fhc$.
Letting $\hat p_{1-\alpha}$ be the $(1-\alpha)$-quantile of $\|\Xh^* - \Xh\|_T$, define the confidence region
\begin{align}\label{conf region boot}
\wh\cR_{1-\alpha}^{\rm boot} = \Big\{\Y \in \cC[0,T] : \|\Xh - \Y\|_T \le \hat p_{1-\alpha}\Big\}.
\end{align}

\begin{remark}
We work with $\|\Xh^* - \Xh\|_T$ instead of $\sqrt{nh^{d+1}}\|\Xh^* - \Xh\|_T$ to emphasize the fact that one would not need to know the convergence rate in order to implement this approach.
\end{remark}

\begin{remark}
In practice, the distribution of $\|\Xh^* - \Xh\|_T$ would need to be approximated by Monte Carlo simulation, based on generating many samples of same size $n$ iid from $\fhc$. As is well-known, when the kernel is nonnegative so that $\fhc = \fh$, we generate $X^* = X_I + h Z$, where $I$ is uniform in $\{1, \dots, n\}$ and, independently, $Z$ is a random variable with density $h^{-d} K(\cdot/h)$. Generating such a $Z$ is particularly simple when the kernel $K$ is separable. In general, sampling from $\fhc$ is not straightforward. We could resort to rejection sampling with proposal density a density estimate obtained from a nonnegative kernel (with a proper choice of bandwidth) multiplied by~2, for example, to ensure domination, at least eventually. 
\end{remark}

Another necessary ingredient to make the imitation method work is to realize that \thmref{weak} applies (for the same given $x_0$) to a sequence of densities $f_n$ indexed by the sample size $n$ as long as their smoothness parameters
\begin{align}\label{kappa seq}
\kappa_{n,m} = \sup_{x\in\cS_0} \|\nabla^m f_n(x)\|, \quad m = 0, \dots, m_0,
\end{align}
remain bounded, and they approximate $f$ to 2nd order in that
\begin{align}\label{eta seq}
\eta_{n,m} = \sup_{x\in\cS_0} \|\nabla^m f_n(x) - \nabla^m f(x)\| \to 0, \quad \text{for all $m = 0, \dots, m_0$}.
\end{align}
This is indeed the case, formalized as the following.

\begin{theorem}\label{thm:weak seq}
Consider a density $f$ satisfying \aspref{f} and, after fixing $x_0$ such that $f(x_0) > 0$, consider the gradient ascent flow defined in \eqref{odemodel}. Let $f_n$ denote a sequence of densities on $\bbR^d$ which are $m_0$ continuous differentiable on $\cS_0$ such that, with definition \eqref{kappa seq}, $\sup_n \kappa_{n,m} < \infty$ for all $m \le m_0$, and such that \eqref{eta seq} holds. 
Choose a kernel satisfying \aspref{K} with $m_1 \ge m_0$, and let $\fh_n$ denote the kernel density estimator for $f_n$ with bandwidth $h$ based on an iid sample of size $n$.
Let $\X_n$ and $\Xh_n$ denote the gradient ascent flows of $f_n$ and $\fh_n$, respectively, originating at the same point $x_0$. 
Under the same conditions on the bandwidth $h$, the sequence of stochastic process $\sqrt{nh^{d+1}}(\Xh_n-\X_n)$ converges weakly in the space $\cC[0,T]$ to the same Gaussian process as in \thmref{weak}.
\end{theorem}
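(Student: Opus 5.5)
The plan is to rerun the proof of \thmref{weak} with $f$ replaced by $f_n$, checking at each step that every constant depends on $f_n$ only through $\bar\kappa := \sup_n \max_m \kappa_{n,m}$ and through a uniform lower bound on $f_n(x_0)$. The only genuinely new input is the continuity of the limiting objects in $f_n \to f$.

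\textbf{Step 1: localization.} Since $\eta_{n,0}\to 0$, eventually $f_n(x_0)\ge 3t_0/4$. Each $\X_n$ stays in $\{f_n\ge f_n(x_0)\}$, and this set lies inside $\cS_0$ once $\eta_{n,0}<t_0/4$; the same holds for $\Xh_n$ once $\hat\eta_{n,0}$ is small. So the whole argument happens on the compact set $\cS_0$, where the sup-norms in \eqref{kappa seq} control everything.

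\textbf{Step 2: uniform bounds.} By \lemref{etarate}, whose constant $C$ depends only on $K$ and $\bar\kappa$, the quantities $\hat\eta_{n,m}$ (the errors of $\fh_n$ relative to $f_n$) obey \eqref{etarate} and \eqref{etavar} uniformly in $n$. The bias bound \eqref{EVHRate} likewise holds with a constant depending only on $\bar\kappa$, since it follows from a Taylor expansion of $f_n$. Then \lemref{XConsist} and \prpref{decompo} give
\[
\sqrt{nh^{d+1}}\,\|\Dt_n\|_T \to 0 \quad \text{a.s.}
\]
The rate conditions give $\sqrt{nh^{d+1}}\,(\hat\eta_{n,2}\hat\eta_{n,1}+\hat\eta_{n,1}^2)\to 0$: the variance part is of order $\log n/\sqrt{nh^{d+5}}$, and the bias part is negligible. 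This reduces the problem to $\Vt_n(t)=\sqrt{nh^{d+1}}\int_0^t(\nabla\fh_n-\nabla f_n)(\X_n(s))\,ds$.

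\textbf{Step 3: matching the limit.} The linear map $\mathscr U_n$ now uses $\nabla^2 f_n(\X_n(s))$ in place of $\nabla^2 f(\X(s))$. A Gr\"onwall argument as in \lemref{XConsist} gives $\|\X_n-\X\|_T\le \eta_{n,1}e^{\kappa_2T}/\kappa_2\to 0$, hence $\nabla^2 f_n(\X_n)\to\nabla^2 f(\X)$ uniformly on $[0,T]$. Consequently $\mathscr U_n v_n\to \mathscr U v$ whenever $v_n\to v$ uniformly. By the extended continuous mapping theorem, it suffices to show $\Vt_n \weak \V$.

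\textbf{Step 4: fidis and tightness.}
\begin{itemize}
\item \emph{Fidis.} The triangular-array computations are as before: Lyapunov's condition holds with the same fourth-moment bound $O(h^{d+3})$.
\item \emph{Covariance.} In the limit \eqref{COvpart2}, the integrand now involves $f_n(\X_n(s)-hz)$ and $(\X_n(s+\tau h)-\X_n(s))/h$. These converge to $f(\X(s))$ and $\tau\nabla f(\X(s))$, respectively, using $\X_n'=\nabla f_n(\X_n)$, uniform continuity, and $\eta_{n,1}\to0$. So dominated convergence again yields $\int p q B^2$ with the same $B$ as in \eqref{B}.
\item \emph{Tightness.} The chaining argument and the bound \eqref{FourthMoDiff} involve only constants controlled by $\bar\kappa$ and $\gamma_n$.
\end{itemize}

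\textbf{Main obstacle.} I expect the main obstacle to be the uniform lower bound $\gamma_n\ge\gamma/2$ in \eqref{gamma}. This bound is what confines the $\tau$-integrals to a fixed compact interval and makes dominated convergence and the moment bounds uniform. I would derive it from $\|\X_n-\X\|_T\to0$ together with $\|\X_n'-\X'\|_T\to 0$. For $|t-s|$ bounded below, it follows directly from the bound for $\X$. For $|t-s|$ small, I would write
\[
\frac{\X_n(t)-\X_n(s)}{t-s}=\frac{1}{t-s}\int_s^t\nabla f_n(\X_n(u))\,du
\]
and use that $\|\nabla f_n(\X_n(\cdot))\|$ is bounded below on $[0,T]$: it is uniformly close to $\|\nabla f(\X(\cdot))\|\ge\gamma>0$. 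It is also nearly constant in direction over short intervals, because its derivative is bounded by $\kappa_{n,2}\kappa_{n,1}$. A compactness split into short and long intervals then gives the uniform bound $\gamma_n\ge\gamma/2$ for all large $n$.
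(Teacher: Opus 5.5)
Your overall architecture is the paper's: uniform convergence $\X_n\to\X$, linearization via \prpref{decompo}, comparison of $\mathscr U_n$ with $\mathscr U$, then fidis and equicontinuity of $\Vt_n$. Your extended continuous mapping theorem is an equivalent substitute for the paper's explicit bound $\|\mathscr U_n v-\mathscr U v\|_T\le T\delta_n e^{(\kappa_{n,2}+\kappa_2)T}\|v\|_T$ plus tightness and Slutsky. Your treatment of the covariance limit and of the uniform bound $\gamma_n\ge\gamma/2$ is more explicit than the paper's ``analogous'', and it is correct. The problem is in Step~2.

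\textbf{The gap in Step~2.} There you bound $\|\Xh_n-\X_n\|_T$ through \lemref{XConsist}, i.e.\ by a multiple of $\hat\eta_{n,1}\asymp\sqrt{\log n/(nh^{d+2})}$. This exceeds the true order $(nh^{d+1})^{-1/2}$ by a factor $\sqrt{\log n/h}$. Inserting it into \prpref{decompo} gives, as you computed, a variance contribution $\sqrt{nh^{d+1}}\,\hat\eta_{n,2}\hat\eta_{n,1}\asymp\log n/\sqrt{nh^{d+5}}$. But this is not $o(1)$ under the hypotheses: it needs $nh^{d+5}/(\log n)^2\to\infty$. The theorem only assumes $nh^{d+4}/\log n\to\infty$ and $nh^{d+1+2[(m_0-1)\wedge(k_0+1)]}\to0$.
\begin{itemize}
\item The choice $h=((\log n)^2/n)^{1/(d+4)}$ is admissible, yet gives $\log n/\sqrt{nh^{d+5}}=h^{-1/2}\to\infty$.
\item When $(m_0-1)\wedge(k_0+1)=2$ (any nonnegative kernel), the upper hypothesis is exactly $nh^{d+5}\to0$, so your bound diverges for every admissible $h$.
\end{itemize}
So the claimed almost sure negligibility $\sqrt{nh^{d+1}}\|\Dt_n\|_T\to0$ does not follow.

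\textbf{The repair.} This is what the paper does: use \prpref{decompo} in relative form. Set $\epsilon_n:=Te^{\bar\kappa T}(\hat\eta_{n,2}+\bar\kappa\|\Xh_n-\X_n\|_T)$. Then $\epsilon_n\to0$ a.s., which needs only $\hat\eta_{n,2}\to0$ (i.e.\ $nh^{d+4}/\log n\to\infty$) and consistency. You get $\|\Dt_n\|_T\le\epsilon_n\|\Xh_n-\X_n\|_T\le\epsilon_n(\|\Zt_n\|_T+\|\Dt_n\|_T)$, hence $\|\Dt_n\|_T\le2\epsilon_n\|\Zt_n\|_T$ eventually. Your Steps~3--4 do not use the negligibility of $\Dt_n$, and they show that $\sqrt{nh^{d+1}}\Zt_n=\mathscr U_n\Vt_n$ converges weakly, hence is tight. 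Therefore $\sqrt{nh^{d+1}}\|\Dt_n\|_T=o_{\P}(1)$, which is all Slutsky requires.

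\textbf{A smaller imprecision in Step~1.} $\eta_{n,0}$ controls $f_n$ only on $\cS_0$, so $\{f_n\ge f_n(x_0)\}$ need not lie inside $\cS_0$. What is true, by a continuation argument, is that the trajectory cannot reach $\partial\cS_0$: there $f_n\le t_0/2+\eta_{n,0}<f_n(x_0)$, while $f_n$ increases along the flow.
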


\begin{proof}[Proof of Theorem \ref{thm:weak seq}]
To establish the weak convergence of $\sqrt{nh^{d+1}}(\Xh_n - \X_n)$ to $\U$ using the so-called imitation method which consists in following the proof of \thmref{weak} and replacing the density $f$ (assumed fixed in that proof) with the sequence $f_n$. 

\textbf{Step 1: Uniform Trajectory Convergence} \\
The gradient ascent flow $\X_n(t)$ is driven by the vector field $\nabla f_n$, just as the empirical flow $\Xh(t)$ is driven by $\nabla \fh$. Because $f$ strictly increases along the gradient flow, the true trajectory $\X(t)$ is confined to $\{f \ge t_0\}\subsetneq \cS_0$. Thus, there exists a positive margin $\delta > 0$ between the compact trajectory $\X([0,T])$ and the boundary of the domain $\cS_0$. 

Because the vector fields converge uniformly ($\eta_{n,1} \to 0$), a standard ODE continuation argument ensures that for sufficiently large $n$, the deviation $\|\X_n(t) - \X(t)\|$ remains bounded by $\delta$ for all $t \in [0, T]$. This guarantees that the straight line segment connecting $\X_n(t)$ and $\X(t)$ lies entirely within $\cS_0$. Consequently, we can apply the Mean Value Theorem with the local Lipschitz constant $\sup_{x \in \cS_0} \|\nabla^2 f(x)\| \le \kappa_2$ to bound the spatial difference in the vector fields: $\|\nabla f(\X_n(t)) - \nabla f(\X(t))\| \le \kappa_2 \|\X_n(t) - \X(t)\|$.

Following the same ODE stability arguments used to establish Lemma \ref{lem:XConsist}, we can thus bound the trajectory difference by replacing the estimation error $\hat\eta_1$ with the uniform sequence error $\eta_{n,1}$, which yields
\begin{equation}
    \sup_{t \in [0,T]} \|\X_n(t) - \X(t)\| \le \frac{\eta_{n,1}}{\kappa_2}\big[e^{\kappa_2 T}-1\big] \to 0,
\end{equation}
where the limit follows from \eqref{eta seq}.

\textbf{Step 2: Linearization} \\
Following Proposition \ref{prp:decompo}, we can write
\begin{align}
    \Xh_n(t)-\X_n(t) = \Zt_n(t)+\Dt_n(t), \quad t\in[0,T],
\end{align}
where $\Zt_n(t)$ is uniquely defined by the differential equation
\begin{align}
    \frac{d\Zt_n(t)}{dt}=\nabla \fh_n(\X_n(t))-\nabla f_n(\X_n(t))+\nabla^2 f_n (\X_n(t))\Zt_n(t), \quad \Zt_n(0)=0.
\end{align}
Because of \eqref{kappa seq} and \eqref{eta seq}, we can obtain $\|\Dt_n\|_T = o_{\mathbb{P}}(\|\Xh_n-\X_n\|_T)$ following Proposition \ref{prp:decompo}. In view of Slutsky's Theorem, it suffices to establish the weak convergence of the linearized process $\sqrt{nh^{d+1}}\Zt_n$.

\textbf{Step 3: Lipschitz Mapping} \\
Define the sequence of integral mappings $\mathscr{U}_n: \cC_0[0,T] \mapsto \cC[0,T]$ corresponding to $f_n$ that sends $v\in \cC_0[0,T]$ to the solution of the integral equation
\begin{align}
u_n(t) = v(t) + \int_0^t \nabla^2 f_n(\X_n(s)) u_n(s) ds, \quad u_n(0)=0.
\end{align}
Let $\Vt_n(t) = \sqrt{nh^{d+1}}\int_0^t[\nabla \fh_n(\X_n(s))-\nabla f_n(\X_n(s))]ds$. By definition, $\sqrt{nh^{d+1}}\Zt_n = \mathscr{U}_n \Vt_n$. 
Let $u = \mathscr{U}v$, where $\mathscr{U}$ is given in \eqref{DiffEquDef}. We have
\begin{align}
\|u_n(t) - u(t)\| & = \Big\|\int_0^t \nabla^2 f_n(\X_n(s)) \big[u_n(s) - u(s)\big] ds + \int_0^t \Big[\nabla^2 f_n(\X_n(s)) - \nabla^2 f(\X(s))\Big] u(s) ds\Big\| \\
& \le \int_0^t \kappa_{n,2} \|u_n(s) - u(s)\| ds + t \delta_n \|u\|_T,
\end{align}
where $\delta_n = \|\nabla^2 f_n(\X_n) - \nabla^2 f(\X)\|_T$. By \eqref{eta seq} and the Lipschitz continuity of the Hessian, $\delta_n \le \eta_{n,2} + \kappa_3 \|\X_n - \X\|_T \to 0$. 
Applying Gr\"{o}nwall's inequality, we obtain:
\begin{align}
\|u_n - u\|_T \le T \delta_n \|u\|_T e^{\kappa_{n,2} T}.
\end{align}
Recall from the properties of $\mathscr{U}$ that $\|u\|_T \le \|v\|_T e^{\kappa_2 T}$. Substituting this into the bound reveals
\begin{align}
\|\mathscr{U}_n v - \mathscr{U} v\|_T \le \Big(T \delta_n e^{(\kappa_{n,2} + \kappa_2) T}\Big) \|v\|_T.
\end{align}
Because $\sup_n \kappa_{n,2} < \infty$ and $\delta_n \to 0$, the multiplier in the parentheses vanishes asymptotically. 
Consequently, we can write $\sqrt{nh^{d+1}}\Zt_n = \mathscr{U} \Vt_n + (\mathscr{U}_n \Vt_n - \mathscr{U} \Vt_n)$. As we establish below, $\Vt_n$ converges weakly and is therefore stochastically bounded (tight). Thus, the difference term $(\mathscr{U}_n \Vt_n - \mathscr{U} \Vt_n)$ is $o_{\mathbb{P}}(1)$. By the Continuous Mapping Theorem, $\mathscr{U} \Vt_n \weak \mathscr{U} \V = \U$, and Slutsky's Theorem guarantees that adding the $o_{\mathbb{P}}(1)$ difference preserves the same weak limit.

\textbf{Step 4: Finite Dimensional Marginals} \\
We evaluate the transformations $\wt\cJ_n(p_t)$ and $\cJ_n(p_t)$ defined analogously to \eqref{whJp} but evaluated along $\X_n(s)$. Specifically, we decompose $\wt\cJ_n(p) = \frac{1}{nh^{d+1}}\sum_{i=1}^n\cE_{n,i}(p)$, where 
\begin{align}
\cE_{n,i}(p) := \int_{\bbR} p(s)\nabla K\bigg(\frac{\X_n(s)-X_i}{h}\bigg)ds, \quad X_i \sim f_n,
\end{align}
and let $\cE_n(p)$ denote an independent copy of these terms. Note that $\Vt_n(t)=\sqrt{nh^{d+1}}[\wt\cJ_n(p_t)-\cJ_n(p_t)].$ In the following, we let $\bbE_n$ and $\Cov_n$ denote the expectation and covariance taken with respect to the density $f_n$.

For the mean, analogous to \eqref{EVHRate}, we have
\begin{equation}
\sup_{x \in \bbR^d} \big\|\bbE_n[\nabla \fh_n (x)] - \nabla f_n(x)\big\| = O(h^{(m_0-1) \wedge (k_0+1)}) \kappa_{n,m_0}.
\end{equation}
Thus, under the assumption $\sup_n \kappa_{n,m_0} < \infty$ and $n h^{d+1+2[(m_0-1) \wedge (k_0+1)]} \to 0$, for any $t \in [0,T]$,
\begin{align}
\bbE_n[\Vt_n(t)] = \sqrt{nh^{d+1}}\Big(\bbE_n[\wt\cJ_n(p_t)] -\cJ_n(p_t)\Big) \to 0 = \bbE[\V(t)], \quad \text{as } n \to \infty.
\end{align}

Similarly for the covariance and the fourth moment, because the densities $f_n$ are uniformly bounded over the domain of integration ($\sup_n \kappa_{n,0} < \infty$), we can verify $\Cov_n(\Vt_n(t_1), \Vt_n(t_2)) \to \Cov(\V(t_1), \V(t_2))$ for any $t_1, t_2 \in [0,T]$ (analogous to \eqref{cov asymp}), and $\bbE_n\|\cE_n(p)\|^4 = O(h^{d+3})\|p\|_\infty^4$ (analogous to \eqref{Raw4thM}). This uniform control yields the asymptotic normality of $(\Vt_n(t_1),\dots,\Vt_n(t_m))$ for any $m \ge 1$ and any $t_1, \dots, t_m \in [0,T]$ via the Cram\'{e}r--Wold device.

\textbf{Step 5: Asymptotic Equicontinuity of $\Vt_n$} \\
Let $\hat\varsigma_n(t) := \sqrt{nh^{d+1}}(\wt\cJ_n(p_t)-\bbE_n\wt\cJ_n(p_t))$. Notice that the fourth moment bound evaluated in \eqref{FourthMoDiff} depends only on integrals of the kernel gradient and the density supremum. Because $\sup_n \kappa_{n,0} < \infty$, we maintain uniformly
\begin{align}
\bbE_n\|\hat\varsigma_n(t_1)-\hat\varsigma_n(t_2)\|^4 = O(1) \bigg[|t_1-t_2|^2+\frac{1}{nh^{d-1}}|t_1-t_2|\bigg].
\end{align}
This uniform bound preserves the Kolmogorov chaining argument over the grid $\cA$, establishing \eqref{EquiCont1} for $\hat\varsigma_n$.

To handle the discretization error off the grid, in view of \eqref{wtcJ}, the bound requires controlling the stochastic fluctuation represented by
\begin{align}
\hat\eta_{n,1}^{\rm sd} := \sup_{x \in \cS_0} \|\nabla \fh_n(x) - \bbE_n[\nabla \fh_n(x)]\|.
\end{align}
Standard empirical process bounds for this fluctuation depend on the underlying density only through its supremum to control the variance, which can be bounded independent of $n$, because we have the uniform bound $\sup_n \sup_{x \in \cS_0} f_n(x) \le \sup_n \kappa_{n,0} < \infty$. Consequently, the standard asymptotic rate applies directly to the sequence $f_n$, yielding $\hat\eta_{n,1}^{\rm sd} = O(\sqrt{\log n / (n h^{d+2})})$ almost surely. Thus, the bounding step similar to \eqref{EquiCont2} holds:
\begin{align}
\sup_{t\in[0,T]} \|\hat\varsigma_n(t)-\hat\varsigma_n(\pi(t))\| = O\left(\frac{\sqrt{\log n}}{n h^{d-1/2}}\right) = o(1), \quad \text{a.s.}
\end{align}

Because the finite-dimensional marginals of $\Vt_n$ converge to those of $\V$, and the asymptotic equicontinuity of $\Vt_n$ holds, we conclude that $\Vt_n \weak \V$ in $\cC[0,T]$. Combined with the Continuous Mapping Theorem result in Step 3 and Slutsky's Theorem, this establishes $\sqrt{nh^{d+1}}(\Xh_n-\X_n) \weak \U$, completing the proof of Theorem \ref{thm:weak seq}.
\end{proof}

The sequence given by $\fhc = \fhc_n$ satisfies the conditions of \thmref{weak seq} almost surely, by the fact that $\fhc = \fh$ on $\cS_0$ eventually, and for  $m = 0, \dots, m_0$, $\hat\eta_m$ (corresponding to $\eta_{n,m}$) is such that $\hat\eta_m \to 0$, and $\wh\kappa_m$ (corresponding to $\kappa_{n,m}$) is such that $\wh\kappa_m \le \kappa_m + \hat\eta_m$, all by \eqref{etarate}. 
We are therefore able to apply \cite[Th 18.3.1]{lehmann2022testing} and get, via the {\em Continuous Mapping Theorem}, that
\begin{align}
\sqrt{nh^{d+1}}\|\Xh^* - \Xh\|_T \longto \|\U\|_T = Q, \quad \text{in distribution}.
\end{align}
Reasoning as we did in \secref{plugin pivotal}, we obtain the following corollary.
 
\begin{proposition}
$\wh\cR_{1-\alpha}^{\rm boot}$ defined in \eqref{conf region boot} is a confidence region for $\X$ with asymptotic level $1-\alpha$.
\end{proposition}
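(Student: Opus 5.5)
The plan is to reduce the claim to two facts: a limit law for the pivot $\sqrt{nh^{d+1}}\|\Xh-\X\|_T$, and a conditional limit law for its bootstrap counterpart, which we then compare through quantiles. First, by \eqref{weak supnorm}, which follows from \thmref{weak} and the Continuous Mapping Theorem, $\sqrt{nh^{d+1}}\|\Xh-\X\|_T\to Q=\|\U\|_T$ in distribution. Second, we apply \thmref{weak seq} conditionally on the sample, with $f_n=\fhc$. This gives that the conditional law of $\sqrt{nh^{d+1}}\|\Xh^*-\Xh\|_T$ given $X_1,\dots,X_n$ converges weakly to the law of $Q$, almost surely.

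Before invoking \thmref{weak seq}, we check its hypotheses along almost every sample path. We take $m_1\ge m_0$ as that theorem requires. Eventually $\fhc=\fh-\hat c$ on $\cS_0$, so the derivatives of order $m\ge1$ coincide with those of $\fh$. Also $\sup_{\cS_0}|\fhc-f|\le\hat\eta_0+\hat c$, and $\hat c\to0$ because $\hat\eta_0\to0$ together with the tail behavior of $f$. By \lemref{etarate} we therefore get $\eta_{n,m}\le\hat\eta_m+\hat c\,\1\{m=0\}\to0$ and $\kappa_{n,m}\le\kappa_m+\eta_{n,m}$, which are bounded. Moreover, eventually the flow of $\fhc$ started at $x_0$ equals $\Xh$. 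Hence on an event of probability one the sequence $\fhc$ satisfies \eqref{kappa seq} and \eqref{eta seq}, and the conditional weak convergence holds almost surely.

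Next we pass from distributions to quantiles. By \lemref{Q quant}, $Q$ has a continuous distribution function that is strictly increasing on its support. Let $\hat p_{1-\alpha}$ be the conditional $(1-\alpha)$-quantile of $\|\Xh^*-\Xh\|_T$. Since the conditional law of the rescaled bootstrap statistic converges, $\sqrt{nh^{d+1}}\,\hat p_{1-\alpha}\to q_{1-\alpha}$ almost surely (in particular in probability). Then
\[
\P\big(\X\in\wh\cR^{\rm boot}_{1-\alpha}\big)=\P\Big(\sqrt{nh^{d+1}}\|\Xh-\X\|_T\le\sqrt{nh^{d+1}}\,\hat p_{1-\alpha}\Big).
\]
By Slutsky's lemma, the random variable $\sqrt{nh^{d+1}}\|\Xh-\X\|_T-\sqrt{nh^{d+1}}\,\hat p_{1-\alpha}$ converges in distribution to $Q-q_{1-\alpha}$. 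Since $0$ is a continuity point of that limit, the coverage probability tends to $\P(Q\le q_{1-\alpha})=1-\alpha$.

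The main obstacle is the conditional application of \thmref{weak seq}. Its conditions must hold along almost every realized sequence $(\fhc_n)$, and it must produce weak convergence of the conditional laws as in \cite[Th 18.3.1]{lehmann2022testing}, not merely an in-probability statement. To get this, we fix a sample path in the probability-one event where $\hat\eta_m\to0$ for all $m\le m_0$ and $\hat c\to0$, and treat $\fhc_n$ as a deterministic sequence. On that path the localization to $\cS_0$ keeps the bootstrap flows inside the set where $\fhc=\fh-\hat c$ is smooth. The remaining points are routine: the scale factor $\sqrt{nh^{d+1}}$ cancels between pivot and bootstrap quantile, and quantile convergence follows from the strict monotonicity of the distribution function of $Q$.
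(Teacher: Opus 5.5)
You follow the paper's own route: apply \thmref{weak seq} to $f_n=\fhc$ along almost every sample path (the imitation argument), then pass to quantiles via \lemref{Q quant}. The step where you verify the hypotheses of \thmref{weak seq} fails, however, and the paper's own verification in the paragraph after that theorem fails for the same reason. You assert from \lemref{etarate} that $\hat\eta_m\to0$, and that $\kappa_{n,m}\le\kappa_m+\hat\eta_m$ stays bounded, for every $m\le m_0$. Put $r:=(m_0-1)\wedge(k_0+1)\le m_0-1$. The undersmoothing condition $nh^{d+1+2r}\to0$ forces $nh^{d+2m}=nh^{d+1+2r}h^{2(m-r)-1}\to0$ for every $m\ge r+1$. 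So the stochastic term $\sqrt{\log n/(nh^{d+2m})}$ in \eqref{etarate} diverges. This is not an artifact of the bound, since $\operatorname{Var}(\partial^a\fh(x))\asymp f(x)/(nh^{d+2|a|})$. Hence $\sup_{\cS_0}\|\nabla^{m_0}\fh\|\to\infty$ in probability, and the event on which you fix your sample path has probability zero. In particular, $\sup_n\kappa_{n,m_0}<\infty$ is unavailable. That is exactly what Step 4 of the proof of \thmref{weak seq} uses to make the bootstrap-world bias $\bbE_n[\nabla\fh_n]-\nabla f_n$ negligible.

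This is not a bookkeeping issue; the conclusion itself breaks. Let $\E^*$ be expectation given the sample, $K_h:=h^{-d}K(\cdot/h)$, and $\bar K:=K*K-K$; $\bar K$ integrates to zero and kills the moments of order $1,\dots,k_0$. Eventually, for $x$ near $\Xh([0,T])$ (so that $\ball(x,h)\subset\cS_0$, where $\fhc=\fh-\hat c$), we have $\E^*[\nabla\fh^*(x)]-\nabla\fhc(x)=\nabla(K_h*\fh-\fh)(x)=(nh^{d+1})^{-1}\sum_{i=1}^n\nabla\bar K((x-X_i)/h)$. Hence $\mu_n(t):=\sqrt{nh^{d+1}}\int_0^t(\E^*[\nabla\fh^*]-\nabla\fhc)(\Xh(s))\,ds$ equals, up to $o_{\P}(1)$ uniformly on $[0,T]$, the process $\Vt$ with $K$ replaced by $\bar K$ and $\nabla f$ by $0$. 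Its mean vanishes under undersmoothing. Its variance does not: the computation leading to \eqref{cov asymp} gives the limiting covariance $\int_0^{t_1\wedge t_2}f(\X(s))\int_{\bbR}\int_{\bbR^d}\nabla\bar K(z)\nabla\bar K(\tau\nabla f(\X(s))+z)^\top dz\,d\tau\,ds$. This is nonzero. As in the proof of \lemref{B psd}, the inner matrix vanishes only if the line integrals of $\bar K$ in the direction $\nabla f(\X(s))$ vanish identically. Equivalently, $\hat K^2-\hat K$ (with $\hat K$ the Fourier transform of $K$) would vanish on the hyperplane orthogonal to $\nabla f(\X(s))$. That is impossible, since $\hat K$ is continuous with $\hat K(0)=1$ and $\hat K\to0$ at infinity. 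So, given the data, $\sqrt{nh^{d+1}}(\Xh^*-\Xh)$ behaves like $\U+\mathscr{U}\mu_n$, with a random shift of order one. Its conditional law does not converge to that of $\U$, and $\sqrt{nh^{d+1}}\,\hat p_{1-\alpha}\to q_{1-\alpha}$ fails.

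An asymptotically exact region requires changing the bootstrap. Some options:
\begin{itemize}
\item resample from a corrected pilot estimate with bandwidth $g\gg h$ and $ng^{d+2m_0}/\log n\to\infty$, centering at the pilot's flow; its derivatives up to order $m_0$ stay bounded on $\cS_0$, which is what the imitation argument actually needs;
\item center at the flow of $K_h*\fhc$ instead of $\Xh$;
\item resample from the data, for which $\E^*[\fh^*]=\fh$.
\end{itemize}
Once the conditional limit is correct, your quantile and Slutsky steps go through.
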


\begin{remark}
Bootstrap versions of the Studentized or weighted constructions described in \secref{plugin student}, and even in \secref{matrix weighted}, appear viable, but we did not look into the details. 
\end{remark}

\section{Numerical experiments}
\label{sec:numerics}

To evaluate the empirical performance of the proposed confidence regions, we conduct a simulation study. We compare the finite-sample coverage probabilities of the smoothed bootstrap and the SDE plug-in methods across two weighting schemes: raw and spherical weighting.

\subsection{Model setup and tuning parameters}

We simulate data from a two-dimensional Gaussian mixture model with two equally weighted components. The centers of the components are $\mu_1 = (-1.5, 0)^\top$ and $\mu_2 = (1.5, 0)^\top$, with a shared isotropic covariance matrix $\Sigma = 0.8 I_2$. 

The true gradient line of interest is initiated at $x_0 = (-0.3, 1)^\top$ and tracked over $t \in [0, T]$, where $T \in \{2, 5, 10, 25\}$. The ODE is approximated using Euler's method with a step size of $0.01$. A visualization of the density landscape, and the true trajectory, along with a superposition of $100$ empirical trajectories, is provided in Figure~\ref{fig:superposition}.

To assess asymptotic convergence, we evaluate the coverage across a grid of sample sizes: $n \in \{2000, 5000, 10000\}$. For each $n$, the bandwidth is set to $h = 0.6\, n^{-0.15}$ using the Gaussian kernel. This value is chosen to satisfy the theoretical assumptions regarding the relationship between $n$ and $h$ necessary for the asymptotic limits to hold in \thmref{weak}.

The results are aggregated over 5,000 independent Monte Carlo trials for each sample size. Within each trial, critical values are derived using $B=500$ replications of the respective smoothed bootstrap or SDE plug-in processes. The nominal target coverage probability is set to $1-\alpha = 0.90$. 

\begin{figure}[htbp]
    \centering
    \includegraphics[width=0.75\textwidth]{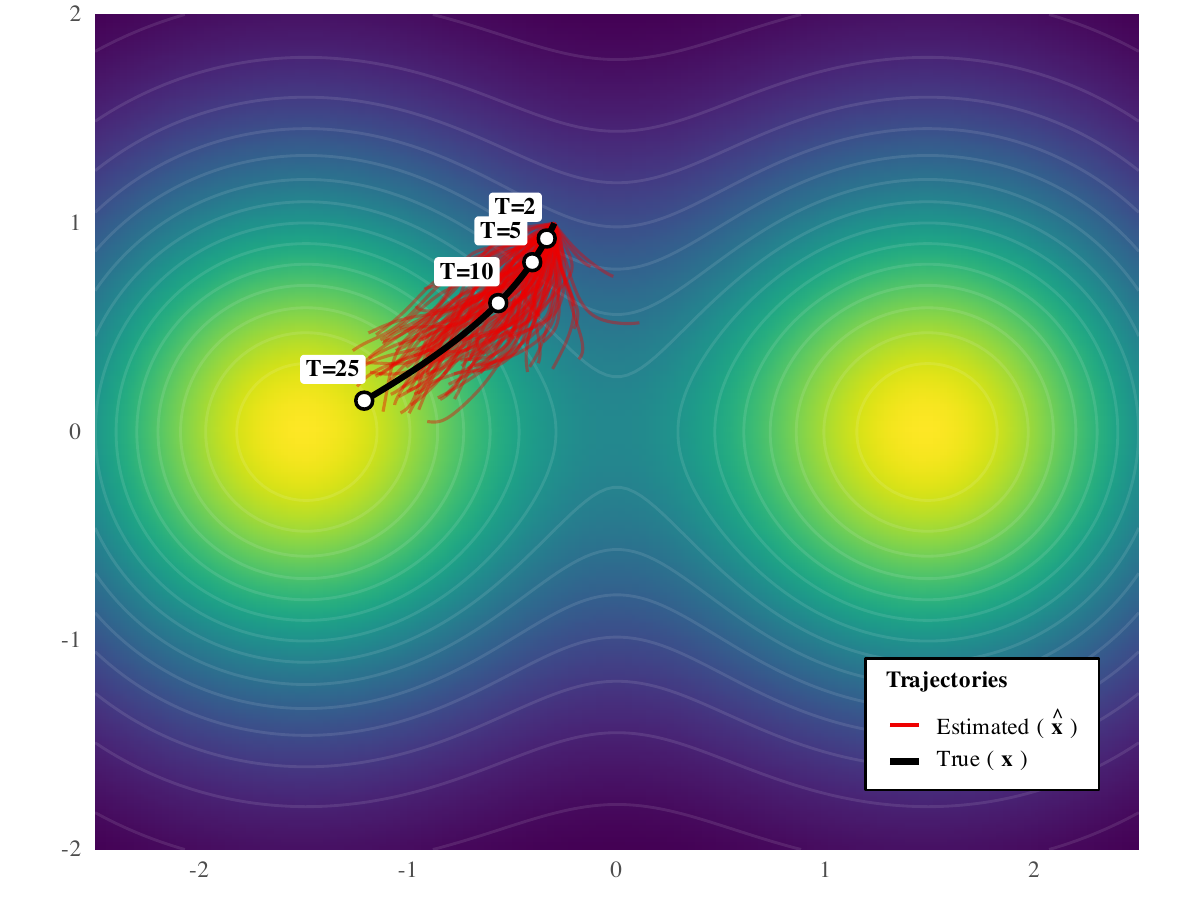}
    \caption{Superposition of $100$ estimated paths $\Xh$ (red) using sample size $n=10000$ over the true path $\X$ (black) on the true density landscape of a Gaussian mixture with two components. The trajectories originate at $x_0=(-0.3,1)$. }
    \label{fig:superposition}
\end{figure}

\subsection{Weighting schemes and quantile estimation methods}

To construct the confidence regions, we evaluate the supremum of the error process $\sqrt{nh^{d+1}}(\Xh(t) - \X(t))$ over $t \in (0, T]$ under two distinct weighting schemes:

\begin{enumerate}
    \item \textbf{Raw:} The raw error process is evaluated without any localized scaling. The critical value bounds the global supremum of the unweighted Euclidean error norm:
    \begin{equation}
        \|\sqrt{nh^{d+1}}(\Xh - \X)\|_T.
    \end{equation}
    
    \item \textbf{Spherical weighting:} The error process is scaled by a scalar sequence $\rho(t)$ meeting the requirements in \thmref{weighted_convergence}\footnote{While \thmref{weighted_convergence} assumes $\rho \in C^1(0, T]$, the differentiability is primarily utilized via the condition $|t \rho'(t) / \rho(t)| = O(1)$ as $t\to0$. Continuity suffices on $[\delta, T]$ for some $0<\delta<T$.}:
    \begin{equation}
        \left\| \sqrt{nh^{d+1}} \rho^{-1}(\Xh - \X) \right\|_T, \quad \text{where} \quad \rho(t) := \sqrt{t} \left[ \log \log \left( \frac{1}{\min(t, 0.3)} \right) \right].
    \end{equation}
\end{enumerate}

For each scheme, confidence bands are constructed using critical values derived from both the smoothed bootstrap and the SDE plug-in processes:

\begin{enumerate}
    \item \textbf{SDE plug-in:} we use the quantiles of $\big\|\Uh\big\|_T$ and $\big\|\rho^{-1} \Uh\big\|_T$ where $\Uh$ is the plug-in estimator of $U$. 
    
    \item \textbf{Smoothed bootstrap:} we use the quantiles of $\sqrt{nh^{d+1}}\|\Xh^* - \Xh\|_T$ and $\sqrt{nh^{d+1}}\|\rho^{-1} (\Xh^* - \Xh)\|_T$.
\end{enumerate}

\subsection{Pointwise asymptotics}

Before examining the simultaneous coverage probabilities, we verify the pointwise asymptotics using a sample size of $n=10000$. Figure~\ref{fig:pointwise_covariance} illustrates the evolution of the error and the corresponding covariance estimators at $T \in \{2, 5, 10, 25\}$. The empirical error distribution $\sqrt{nh^{d+1}}(\Xh - \X)$ is compared against its linearized counterpart $\sqrt{nh^{d+1}}\Zt$ as defined in \prpref{decompo}. To demonstrate the estimation variance across different sample paths, we evaluate two covariance matrix estimators corresponding to the above confidence region methods for 3 out of 100 Monte Carlo trials:
\begin{enumerate}
    \item $\widehat{\Sigma}_{\rm sde}$: The empirical covariance matrix of the $B=500$ simulated trajectories of the SDE plug-in process.
    \item $\widehat{\Sigma}_{\rm boot}$: The empirical covariance matrix of the $B=500$ simulated trajectories of the smoothed bootstrap process.
\end{enumerate}

As $T$ gets larger, the scaled error $\sqrt{nh^{d+1}}(\Xh(T) - \X(T))$ deviates more from its linearized version $\sqrt{nh^{d+1}}\Zt(T)$ in this finite sample experiment. Furthermore, the solid ellipses in Figure~\ref{fig:pointwise_covariance} demonstrate the considerable variation of these two covariance estimators across different random samples when $T$ gets larger.

\begin{figure}[htbp]
    \centering
    \includegraphics[width=\textwidth]{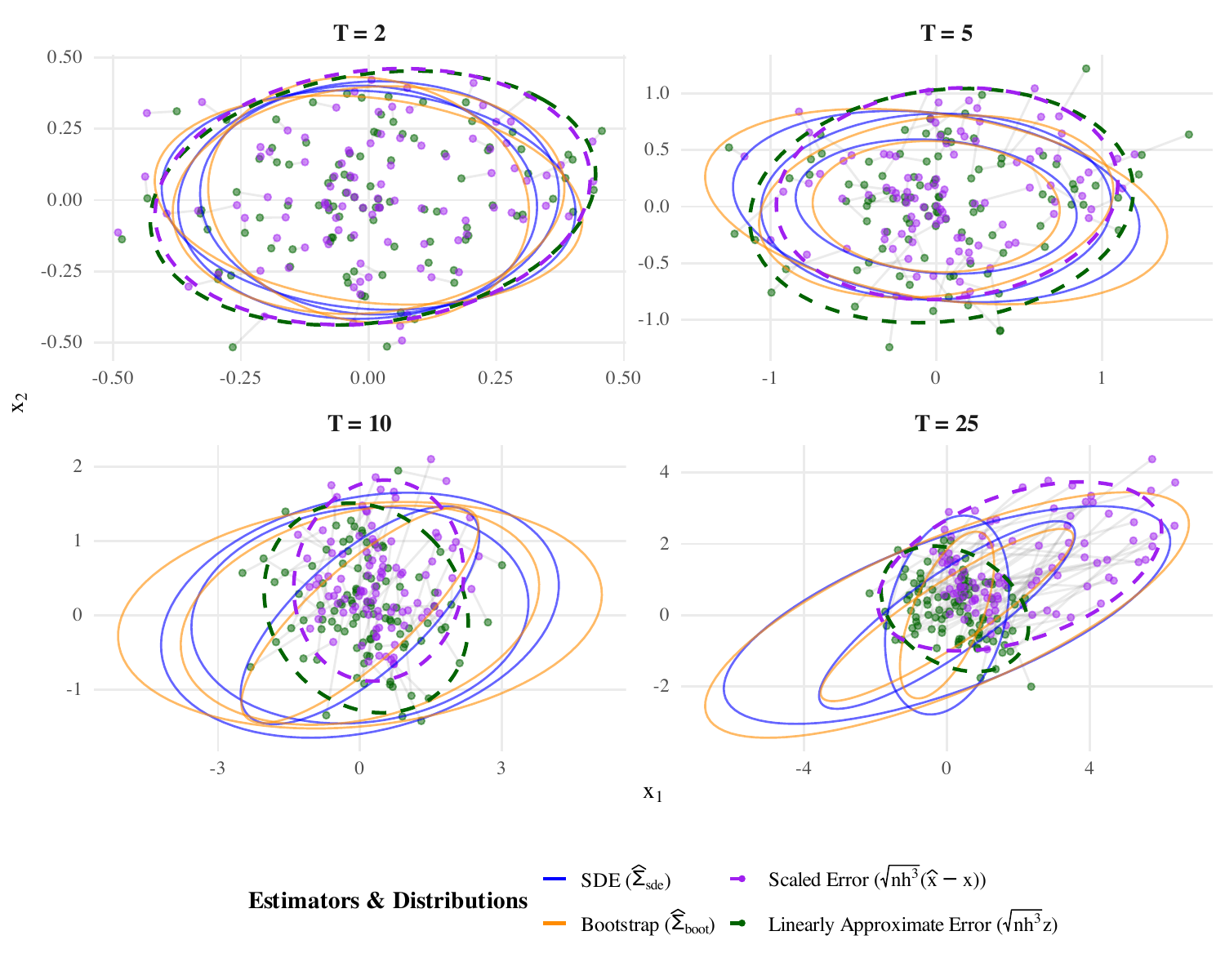}
    \caption{The evolution of the tracking error and covariance estimators across four time horizons $T \in \{2, 5, 10, 25\}$ using $n=10000$. The purple and green points are the scaled errors ($\sqrt{nh^3}(\Xh(T) - \X(T))$) and their linear approximates ($\sqrt{nh^3}\Zt(T)$, see \prpref{decompo}) for 100 random samples, where a pair of purple and green points are connected by a gray line if they are based on the same sample. The dashed ellipses of the same colors (purple and green) represent the $90\%$ highest density regions of Gaussian densities computed from the covariances of the scaled errors and their linear approximates, respectively. The solid ellipses display the $90\%$ highest density regions of Gaussian densities derived from the estimated covariance matrices computed via two distinct methods for 3 randomly selected samples: the SDE estimator $\widehat{\Sigma}_{\rm sde}$ (blue), and the smoothed bootstrap estimator $\widehat{\Sigma}_{\rm boot}$ (orange). This illustrates the estimation variance across random samples.} 
    \label{fig:pointwise_covariance}
\end{figure}

\subsection{Evaluation metrics}
The four confidence regions constructed are all in the form of $\cR_{1-\alpha}= \Big\{\Y \in \cC[0,T] : \Y(t) \in \cB_n(t), \forall t\in [0,T] \Big\}$, where $\cB_n$ depends on the specific construction methods. We report two distinct coverage probabilities evaluated at the target horizons $T \in \{2, 5, 10, 25\}$:

\begin{enumerate}
    \item \textbf{Time-aligned coverage:} By treating $\cR_{1-\alpha}$ as subsets of $\cC[0,T]$ and $\X$ as an element of $\cC[0,T]$, we evaluate the coverage of $\X$ by $\cR_{1-\alpha}$, that is, whether $\X(t) \in \cB_n(t)$, $\forall t\in [0,T]$.  Because our asymptotic theory is established in the sense of this coverage, the target coverage probability for this metric is $1-\alpha = 0.90$.
    \item \textbf{Geometric coverage:} By treating both $\X$ and $\cR_{1-\alpha}$ both as a subset of $\bbR^d$, we evaluate the coverage of $\{\X(t): t \in [0, T]\}$ by $\bigcup_{t\in [0, T]} \cB_n(t)$.  Since this coverage is implied by the time-aligned coverage, the target coverage probability for this metric is asymptotically bounded below by the nominal target, yielding $\ge 0.90$. See \secref{bands}.
\end{enumerate}

\subsection{Results}

The aggregated empirical coverage probabilities across different weighting schemes, quantile estimation methods, and sample sizes are summarized in Table~\ref{tab:time_aligned} (time-aligned coverage) and Table~\ref{tab:geo_tube} (geometric coverage), and visualized in Figure~\ref{fig:coverage_plots}. In Figure~\ref{fig:confidence_tubes} we also provide visualization of the shapes of the confidence regions generated by the two weighting schemes for a single representative trial at $n=10000$. 

As demonstrated in Figure~\ref{fig:pointwise_covariance}, as $T$ gets larger, the linearized theoretical error $\sqrt{nh^{d+1}}\Zt(T)$ becomes a progressively worse approximation of the actual finite-sample error $\sqrt{nh^{d+1}}(\Xh(T) - \X(T))$. Because our asymptotic confidence bands are developed based on the approximation of the latter using the former, achieving the target coverage probability at large $T$ necessitates a larger sample size $n$ (and correspondingly smaller $h$). This trend is clearly visible in Table~\ref{tab:time_aligned}: as $n$ increases from 2000 to 10000, the coverage probabilities for both the smoothed bootstrap and the SDE plug-in methods consistently approach the nominal $0.90$ level. Across the evaluated sample sizes, the SDE plug-in methods display over-coverage at early time horizons, while the smoothed bootstrap offers consistently tighter boundaries. 

\begin{figure}[htbp]
    \centering
    \includegraphics[width=\textwidth]{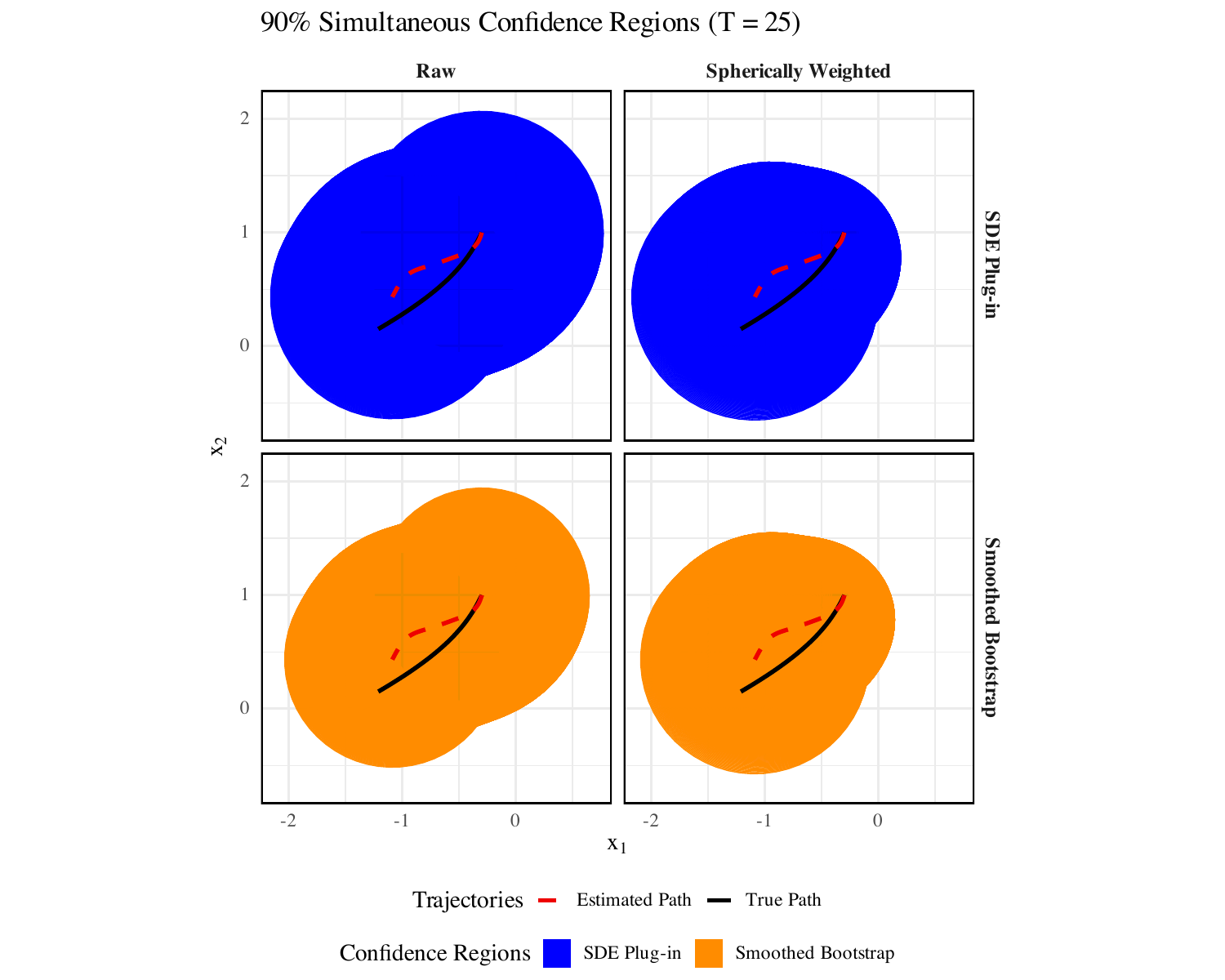}
    \caption{
    Asymptotic $90\%$ confidence regions for $\X(t), t \in [0, 25]$ using a random sample of size $n=10000$. The panels display the confidence regions constructed under the raw (left), and spherically weighted (right) schemes for both the SDE plug-in and smoothed bootstrap methods.}
    \label{fig:confidence_tubes}
\end{figure}

\begin{table}[htbp]
    \centering
    \caption{Time-aligned coverage probabilities (target $= 0.90$). }
    \label{tab:time_aligned}
    \begin{tabular}{ll cc cc}
        \hline\hline
        & & \multicolumn{2}{c}{\textbf{Smoothed Bootstrap}} & \multicolumn{2}{c}{\textbf{SDE Plug-in}} \\
        \cline{3-4} \cline{5-6}
        $n$ & $T$ & Raw & Spherical & Raw & Spherical \\
        \hline
        2000 & 2 & 0.898 & 0.898 & 0.936 & 0.952 \\
        & 5 & 0.846 & 0.855 & 0.922 & 0.948 \\
        & 10 & 0.789 & 0.812 & 0.894 & 0.924 \\
        & 25 & 0.698 & 0.758 & 0.782 & 0.834 \\
        \hline
        5000 & 2 & 0.897 & 0.897 & 0.938 & 0.956 \\
        & 5 & 0.860 & 0.869 & 0.926 & 0.950 \\
        & 10 & 0.805 & 0.830 & 0.892 & 0.925 \\
        & 25 & 0.761 & 0.807 & 0.807 & 0.851 \\
        \hline
        10000 & 2 & 0.896 & 0.896 & 0.937 & 0.956 \\
        & 5 & 0.864 & 0.872 & 0.927 & 0.951 \\
        & 10 & 0.824 & 0.842 & 0.896 & 0.925 \\
        & 25 & 0.822 & 0.860 & 0.849 & 0.879 \\
        \hline\hline
    \end{tabular}
\end{table}

\begin{table}[htbp]
    \centering
    \caption{Geometric coverage probabilities (target $\ge 0.90$).}
    \label{tab:geo_tube}
    \begin{tabular}{ll cc cc}
        \hline\hline
        & & \multicolumn{2}{c}{\textbf{Smoothed Bootstrap}} & \multicolumn{2}{c}{\textbf{SDE Plug-in}} \\
        \cline{3-4} \cline{5-6}
        $n$ & $T$ & Raw & Spherical & Raw & Spherical \\
        \hline
        2000 & 2 & 0.987 & 0.981 & 0.999 & 1.000 \\
        & 5 & 0.934 & 0.902 & 0.991 & 0.964 \\
        & 10 & 0.833 & 0.836 & 0.926 & 0.931 \\
        & 25 & 0.704 & 0.761 & 0.789 & 0.835 \\
        \hline
        5000 & 2 & 0.966 & 0.945 & 0.988 & 0.996 \\
        & 5 & 0.924 & 0.907 & 0.969 & 0.961 \\
        & 10 & 0.833 & 0.852 & 0.910 & 0.929 \\
        & 25 & 0.763 & 0.808 & 0.810 & 0.851 \\
        \hline
        10000 & 2 & 0.956 & 0.938 & 0.976 & 0.978 \\
        & 5 & 0.926 & 0.913 & 0.961 & 0.964 \\
        & 10 & 0.848 & 0.863 & 0.909 & 0.929 \\
        & 25 & 0.825 & 0.861 & 0.850 & 0.880 \\
        \hline\hline
    \end{tabular}
\end{table}

\begin{figure}[htbp]
    \centering
    \includegraphics[width=\textwidth]{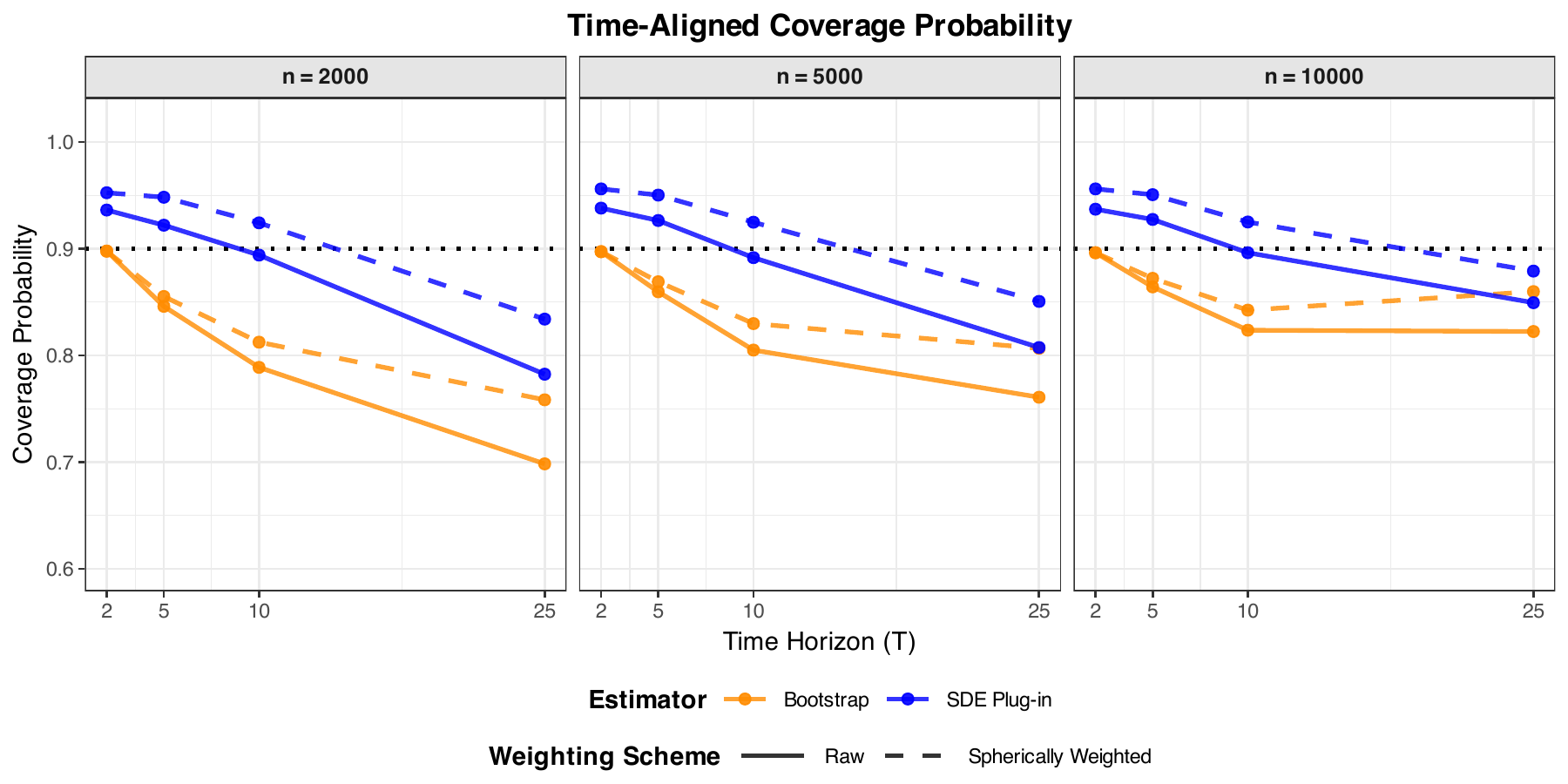}
    \vspace{0.5cm}
    
    \includegraphics[width=\textwidth]{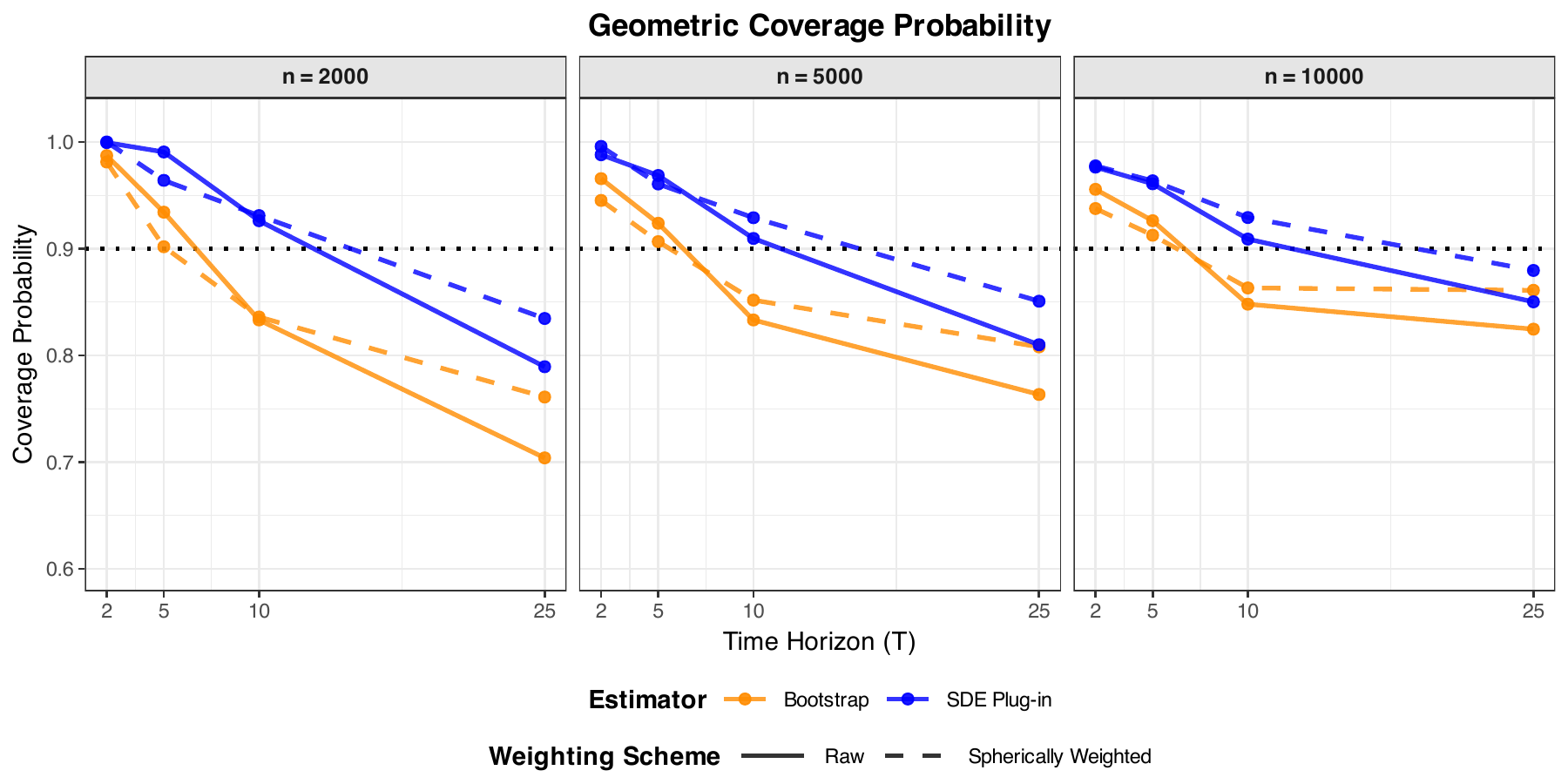}
    \caption{Coverage probabilities at time horizons $T \in \{2, 5, 10, 25\}$. \textbf{Top:} Time-aligned coverage probabilities. \textbf{Bottom:} Geometric coverage probabilities.}
    \label{fig:coverage_plots}
\end{figure}

\section{Discussion}
\label{sec:discussion}

Our work could be expanded in a number of ways and also leaves open a number of questions that arise naturally. We discuss some of these points below, in no particular order.  

\subsection{Different parameterizations of the gradient ascent flow}
Although the gradient ascent flow \eqref{odemodel} is arguably the most natural in the context of this paper, it is worth mentioning that this is not the flow originally discussed by \citet{fukunaga1975}. Indeed, the mean shift that they suggest leads to considering 
\begin{align}\label{odemodel mean shift}
\X_{\rm ms}'(t) = \frac1{f(\X_{\rm ms}(t))} \nabla f(\X_{\rm ms}(t)); \quad \X_{\rm ms}(0) = x_0.
\end{align}
This gradient ascent flow traces the same gradient ascent line, but at a different speed. Under our assumptions, this flow is also defined for all $t \ge 0$. 
In \cite{arias2023moving}, we consider another variant
\begin{align}\label{odemodel level}
\X_{\rm level}'(t) = \frac1{\|\nabla f(\X_{\rm level}(t))\|^2} \nabla f(\X_{\rm level}(t)); \quad \X_{\rm level}(0) = x_0,
\end{align}
which is also a reparameterization of the same gradient ascent flow, this time by the value of $f$, i.e., the level, in that $f(\X_{\rm level}(t)) = t + f(x_0)$. The flow is only defined for $0 \le t < f(x_*) - f(x_0)$ with $x_*$ being the critical point whose basin of attraction contains $x_0$. (In fact, it can be extended by continuity to $0 \le t \le f(x_*) - f(x_0)$.)

In general, consider the flow
\begin{align}\label{odemodel phi}
\Y'(s) = \phi(f, \Y(s)) \nabla f(\Y(s)); \quad \Y(0) = x_0.
\end{align}
The mean shift flow \eqref{odemodel mean shift} corresponds to $\phi(f, x) = 1/f(x)$, while the level flow \eqref{odemodel level} corresponds to $\phi(f, x) = 1/\|\nabla f(x)\|^2$. For the following discussion, it is enough if $\phi(f, \cdot)$ takes positive values and is Lipschitz in a neighborhood of $\X([0,T])$, where $\X$ is the original flow \eqref{odemodel}. In that case, the flow \eqref{odemodel phi} is well-defined for some time. In fact, it is a reparameterization of the original flow in that 
\begin{align}
\Y(s) = \X(\tau(s)), \quad \text{with} \quad \tau'(s) = \phi(f, \X(\tau(s))), \quad \tau(0) = 0,
\end{align}
defined for all $0 \le s \le S = \tau^{-1}(T)$.
Assume that $\phi(\fh, x) \to \phi(f, x)$ uniformly over the neighborhood just mentioned, and consider the plugin estimator flow
\begin{align}\label{odemodel phi hat}
\Yt'(s) = \phi(\fh, \Yt(s)) \nabla \fh(\Yt(s)); \quad \Yt(0) = x_0,
\end{align}
which can also be expressed as
\begin{align}
\Yt(s) = \Xh(\hat\tau(s)), \quad \text{with} \quad \hat\tau'(s) = \phi(\fh, \Xh(\hat\tau(s))), \quad \hat\tau(0) = 0,
\end{align}
eventually (as $n \to \infty$) defined for all $0 \le s \le S$.
In that case, under the same assumptions as in \thmref{weak}, using the conclusions of the theorem combined with the Continuous Mapping Theorem, in the space $\cC[0, S]$, 
\begin{align}
\sqrt{n h^{d+1}} (\Yt - \Y) \weak U \circ \tau,
\end{align}
where $U$ is the process defined in the same theorem. Recalling that $U$ is the solution to 
\[dU(t) = A(t) U(t) dt + B(t) dW(t), \quad U(0) = 0,\]
where $A$ is defined in \eqref{A} and $B$ is defined in \eqref{B}, 
$V = U \circ \tau$ is the solution to 
\[dV(s) = \phi(f, \tau(s)) A(\tau(s)) V(s) ds + \sqrt{\phi(f, \tau(s))} B(\tau(s)) dW(s), \quad V(0) = 0.\]

\subsection{Weak convergence on $[0,\infty)$?} 
\label{sec:infinite time}

Under \aspref{f}, in fact as long as $\kappa_1$ in \eqref{kappa} is finite, the gradient ascent flow \eqref{odemodel} is well-defined on $[0,\infty)$, and it is natural to want to derive the asymptotic behavior of $\Xh$ over the entire infinite time horizon $[0,\infty)$. It is known that $x_* = \lim_{t \to\infty} \X(t) = \X(\infty)$ exists and is a critical point of $f$. 

When $x_*$ is a non-degenerate mode, meaning that $H_* = \nabla^2 f(x_*)$ is negative definite, it is known that $\Xh$ is uniformly consistent for $\X$ over the infinite time horizon, 
\begin{align}\label{consistent}
\|\Xh-\X\|_\infty \to 0\quad \text{a.s.}.
\end{align}
This was shown in previous work \cite{arias2016estimation, arias2025clustering} under some assumptions similar to ours here. \aspref{f} and \aspref{K} are certainly sufficient. The issue that arises when deriving the limiting behavior of $\Xh-\X$ is that convergence at $t = \infty$ is substantially slower. While in \thmref{weak} we have shown that the convergence rate is $\sqrt{n h^{d+1}}$ in that $\sqrt{n h^{d+1}}(\Xh-\X) \weak U$ where $U$ is the Gaussian process solution to \eqref{SDEMain} starting at $U(0) = 0$, under the same conditions, at $t = \infty$ we have $\sqrt{n h^{d+2}}(\hat x_* - x_*) \weak Z$, where $Z$ is normal with zero mean and covariance matrix $f(x_*) H_*^{-1} C_0 H_*^{-1}$ with $C_0 := \int \nabla K(x) \nabla K(x)^\top dx$ \cite[Th 2.2(i)]{mokkadem2003law}.

The slower convergence rate at $t =\infty$ has consequences for the convergence at $t \to \infty$ sufficiently fast. Indeed, by linearizing $\nabla f$ near $x_*$, it can be shown \cite{barreira2012ordinary} that $\|\X(t) - x_*\| \le c e^{- t/c}$ for a constant $c > 0$. Similarly, $\|\Xh(t) - \hat x_*\| \le \hat c e^{- t/\hat c}$ for a constant $\hat c > 0$, and it can be further shown that $\hat c$ is bounded away from 0 under our working conditions. By making $c$ smaller if needed, we may assume that $\|\Xh(t) - \hat x_*\| \le c e^{- t/c}$ a.s.. Therefore, $\|\X(t) - x_*\| \le c/n$ and $\|\Xh(t) - \hat x_*\| \le c/n$ for all $t \ge c \log n$ a.s.. As a consequence, if we let $\Y(t) = \X(t+c\log n)$ and $\Y(t) = \Xh(t+c\log n)$, defined for $t \ge 0$, we have that $\sqrt{n h^{d+2}}(\Yt - \Y)$ converges in distribution to the Gaussian process on $[0,\infty)$ constant at $Z$ --- a degenerate limit.

It is an open question to derive a nontrivial asymptotic limit distribution for $\|\Xh - \X\|_\infty$ after proper scaling. If it is entirely dominated by what happens at $t = \infty$, the proper scaling might be in $\sqrt{n h^{d+2}}$. In any case, the corresponding confidence region is bound to be inaccurate (very wide) at finite times, and the consideration of a weighting scheme as in \secref{spherically weighted}, for example, would appear to be particularly important. We leave the construction of such a confidence region covering the entire infinite time horizon for future work.

\subsection{Bias correction and under-smoothing}

It is well known that bias must be handled carefully when using the bootstrap to construct nonparametric confidence bands or regions. Two common strategies are under-smoothing and explicit bias correction~\cite{hall1992effect}. The method studied in \secref{plugin} follows the under-smoothing approach, where a bandwidth smaller than the optimal one is chosen so that the bias decreases faster than the stochastic error. In contrast, the bias-correction approach allows the bias to be of the same order of (or even of higher order than) the stochastic term. The bias is then estimated and subtracted before applying the bootstrap. Below, we outline how the bias-correction method can be applied to construct confidence bands for gradient lines, providing an alternative to the under-smoothing strategy.

Suppose that $m_0 -3 \ge k_0+1$ and $\int_{\bbR^d} K(x) x^a dx \neq 0$ for some $a$ such that  $|a| = k_0+1$. Then it follows from the calculation in \eqref{EVHRate} that
\begin{align}
h^{-(k_0+1)}\big\{\bbE[\nabla \fh (\X(s))] - \nabla f(\X(s))\big\} \to \cB(\X(s)), \quad \text{as } h \to 0,
\end{align}
where
\begin{align}
\cB(x) := \int_{\ball(0,1)} \nabla K(z)\Big[\sum_{|a| = k_0+2} \tfrac1{a!} (-1)^{|a|} \partial^a f(x) z^{a}  \Big]dz.
\end{align}
Following analogous arguments in the proof of \thmref{weak}, we can derive that 
\[h^{-(k_0+1)}\E[\Vt(t)] \to \int_0^t \cB(\X(s)) ds, \quad \text{uniformly in $t\in[0,T]$,}\] 
and the sequence of function $h^{-(k_0+1)}\E[\Xh-\X]$ converges in the space $\cC[0,T]$ to $W$ defined by the following ODE
\begin{align}
dW(t) = \nabla^2 f(\X(t)) W(t)dt + \cB(\X(t)) dt,
\end{align} 
with $W(0) = 0$. In other words, $h^{(k_0+1)} W(t)$ is the first-order approximation of the bias of the stochastic process $\Xh-\X$. Here we use the (weaker) assumption $nh^{d+1+2(k_0+3)}\to0$ to replace $n h^{d+1+2[(m_0-1) \wedge (k_0+1)]} \to 0$ in \thmref{weak}. Then a similar proof for that theorem can be used to show that the sequence of stochastic process $\sqrt{nh^{d+1}}[\Xh-\X - h^{(k_0+1)} W]$ converges weakly in the space $\cC[0,T]$ to the Gaussian process $\U$. Let $\wh W$ be a plugin estimator of $W$ satisfying 
\begin{align}
d\wh W(t) = \nabla^2 \fh(\wh\X(t)) \wh W(t)dt + \cB(\wh\X(t)) dt,
\end{align} 
with $\wh W(0) = 0$. Here $\wh W$ can be based on a different bandwidth, say $h_1$, such that $\|W-\wh W\|_T = O_{a.s.}(h^2)$, which is possible using the arguments in \prpref{decompo}. Let $\hat p_{1-\alpha}$ be the $(1-\alpha)$-quantile of $\|\Xh^* - \Xh - h^{(k_0+1)}\wh W\|_T$. Then the following is a confidence region for $\X$ with asymptotic level $1-\alpha$ using the bias-correction approach:
\begin{align}
\wh\cR_{1-\alpha}^{\rm bc} = \Big\{\Y \in \cC[0,T] : \|\Xh - \Y - h^{(k_0+1)}\wh W \|_T \le \hat p_{1-\alpha}\Big\}.
\end{align}

\subsection{Convergence rate}
By \eqref{weak supnorm}, the convergence rate of the kernel density plugin estimator \eqref{odemodel hat} when using bandwidth $h$ is exactly $\sqrt{n h^{d+1}}$ as long as $h = h_n$ satisfies the conditions of \thmref{weak}, that is, $nh^{d+4}/\log n\to \infty$ and $n h^{d+1+2[(m_0-1) \wedge (k_0+1)]} \to 0$. Since this is in the hands of the analyst, suppose the kernel function $K$ kills $k_0 \ge m_0-2$ moments. In that case, the maximum order of magnitude for $h$ is $n h^{d+m_0-1} \to 0$, say $h = o(1) n^{-1/(d+m_0-1)}$ with $o(1)$ representing a term going to zero arbitrarily slowly, yielding a convergence rate of $o(1) n^a$ with exponent $a = \frac12 (m_0-2)/(d+m_0-1)$. (As expected, there is a standard curse of dimensionality, which simply comes from estimating the density based solely on smoothness assumptions.) 

Two questions arise: 
\begin{enumerate}
\item {\em Is this rate optimal for the present approach?} The limit distribution in \eqref{weak supnorm} is non-degenerate, so that the rate is indeed $\sqrt{n h^{d+1}}$ as long as $h$ satisfies the stated conditions. However, it might be possible to relax the upper bound on $h$, that is, $h \ll n^{-1/(d+m_0-1)}$. 
\item {\em Is this rate minimax optimal?} The question of minimax optimality for the estimation of \eqref{odemodel} is not completely trivial to formulate as the class of densities to consider needs to be carefully defined. Without going into any details here, we simply state that, although kernel density estimation is, under our working assumptions, minimax optimal  for estimating the density and its derivatives, it is not at all clear that the plugin approach in \eqref{odemodel hat} is minimax optimal. 
\end{enumerate}

We note that a rate of convergence is derived in \cite{arias2016estimation} for the entire infinite time horizon, although it is clearly suboptimal.

\subsection{Impact of discretization}
In practice, the estimated flow \eqref{odemodel hat} needs to be approximated using a numerical scheme. In \aspref{f}, $f$ is taken to be $m_0$ continuously differentiable with bounded derivatives of order $\le m_0$. If we choose a kernel function satifying \aspref{K} with $m_1 \ge m_0$, $\fh$ satisfies the same properties almost surely. In that case, $\Xh$ in \eqref{odemodel hat} is also $m_0$ times continuously differentiable with bounded derivatives of order $\le m_0$, and using the Adams–Bashforth method of order $m_0-1$ (see, e.g., \cite[Sec 6.2.1]{gautschi2012numerical}), results in a global error of order $O(\nu^{m_0-1})$ if $\nu$ is the step size: If $(x_{k,\nu} : k = 0, \dots, \lfloor T/\nu \rfloor)$ denotes the sequence that the Adams–Bashforth method returns, then $\|x_{k,\nu} - \Xh(k \nu)\| \le C_1 \wh\kappa_{m_0} \nu^{m_0-1}$ for all $k = 0, \dots, \lfloor T/\nu \rfloor$ for some constant $C_1$ that depends only on $m_0$ and $d$ (see, e.g., \cite[Th 6.1.3]{gautschi2012numerical}). We can, and do, take $x_{k,n} = x_0$, since the origin point is known.

Exercising the axiom of choice, let $\Xt$ be a $m_0$ continuously differentiable function on $[0,T]$ satisfying $\max_k \|\Xt(k\nu) - x_{k,\nu}\| \le 2 C_1 \kappa_{m_0} \nu^{m_0-1}$ such that its $m_0$ differential has smallest supnorm. In particular, eventually, $\|\nabla^{m_0} \Xt\|_T \le \|\nabla^{m_0} \Xh\|_T$, since $\Xh$ is a competitor as soon as $\wh\kappa_{m_0} \le 2 \kappa_{m_0}$, which happens eventually. (For a more constructive way of obtaining a function with similar properties, see \cite{fefferman2009fitting2}.) 
Note that there is a constant $C_2$ depending only on $m_0$ ad $d$ such that $\|\nabla^{m_0} \Xh\|_T \le C_2 \max_{m \le m_0} \wh\kappa_m^{m_0-1}$, and since under our assumptions $\wh\kappa_m \to \kappa_m$ for all $m \le m_0$, $\|\nabla^{m_0} \Xh\|_T \le C_3$ for some constant $C_3$ that also depends on $\max_{m \le m_0} \kappa_m$. 
Using \lemref{interpolation} below, we find that $\|\Xt - \Xh\|_T \le C_4 \nu^{m_0-1}$, where $C_4$ depends on $m_0$, $d$, and $\max_{m \le m_0} \kappa_m$. Therefore, if the step size $\nu = \nu_n$ is chosen small enough that $\sqrt{n h^{d+1}} \nu^{m_0-1} \to 0$, we have $\sqrt{n h^{d+1}} (\Xt - \X) \weak U$, that is, $\Xt$ behaves exactly like $\Xh$ asymptotically.

\begin{lemma}\label{lem:interpolation}
Consider $g : [0,1] \to \bbR$ which is $m$ times continuous differentiable and such that $|g(k/r)| \le \eta$ for all $k = 0, \dots, r$. Then there is a constant $C$ depending only on $m$ such that $\|g\|_\infty \le C (\eta + \|g^{m)}\|_\infty r^{-m})$. 
\end{lemma}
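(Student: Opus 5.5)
Let $p$ be the Lagrange interpolation polynomial of degree at most $m-1$ built on $m$ consecutive grid nodes, and write $g = p + (g - p)$ on each short window of such nodes. The bound then splits into two local estimates: the interpolant is controlled by $\eta$ through the Lebesgue constant of equispaced nodes, and the remainder $g-p$ is controlled by the classical interpolation error formula. Covering $[0,1]$ by finitely many such windows gives the supremum bound. Throughout we assume $r \ge m-1$; otherwise we use all $r+1$ nodes on $[0,1]$ as the window, and the same argument applies with degree $r \le m-1$ and error formula of order $r+1$.

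\emph{Setting up the windows.} Fix $t \in [0,1]$. Choose an index $j$ with $0 \le j \le r-m+1$ such that the $m$ nodes $x_i = (j+i)/r$, $i = 0,\dots,m-1$, lie in a window $I = [j/r, (j+m-1)/r]$ that contains $t$. Such a $j$ exists because consecutive windows of this form overlap and together cover $[0,1]$. The length of $I$ is $(m-1)/r$, which is the only property of $I$ we need.

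\emph{Bounding the interpolant.} Let $p$ be the degree $\le m-1$ Lagrange interpolant of $g$ at the nodes $x_0,\dots,x_{m-1}$. Then
\[
|p(t)| \le \Lambda_m \max_i |g(x_i)| \le \Lambda_m \eta .
\]
Here $\Lambda_m = \max_{s \in [0, m-1]} \sum_{i} \prod_{k \ne i} |s - k|/|i-k|$ is the Lebesgue constant of $m$ equispaced nodes, rescaled to the interval $[0,m-1]$. Since the rescaling is affine, $\Lambda_m$ is independent of $r$ and $j$ and depends only on $m$.

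\emph{Bounding the remainder.} The interpolation error formula gives some $\xi \in I$ with
\[
g(t) - p(t) = \frac{g^{(m)}(\xi)}{m!} \prod_{i=0}^{m-1} (t - x_i).
\]
Each factor satisfies $|t - x_i| \le (m-1)/r$, since both $t$ and $x_i$ lie in $I$. Hence
\[
|g(t) - p(t)| \le \frac{(m-1)^m}{m!}\, \|g^{(m)}\|_\infty\, r^{-m}.
\]
The formula needs only $g \in C^m$, which is exactly what the lemma assumes.

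\emph{Conclusion.} Combining the two estimates,
\[
|g(t)| \le \Lambda_m \eta + \frac{(m-1)^m}{m!}\, \|g^{(m)}\|_\infty\, r^{-m},
\]
and taking the supremum over $t \in [0,1]$ gives the lemma with $C = \max\{\Lambda_m, (m-1)^m/m!\}$. For the vector-valued use made above (applying the lemma to $\Xt - \Xh$), apply it to each coordinate.

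\emph{Main obstacle.} The only real difficulty is at the boundary: a window of $m$ consecutive nodes containing $t$ must lie inside $[0,1]$ without losing uniformity in the constant. Allowing the interpolant to be evaluated anywhere in its node window, including near its ends, handles this. The Lebesgue constant over the whole window $[0,m-1]$, not just the central part, still depends only on $m$, so no constant degrades near $0$ or $1$.
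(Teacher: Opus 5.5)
Your core argument is correct and follows the paper's route: localize to grid nodes near $t$, bound the interpolant by a Lebesgue constant times $\eta$, and bound $g-p$ by the Lagrange remainder. The one difference favors you. You interpolate at $m$ nodes with degree $\le m-1$, so the classical remainder formula really does involve $g^{(m)}$ under the $C^m$ hypothesis. The paper uses the $m+1$ nearest nodes and degree $\le m$, and for that choice the remainder formula, read literally, involves $g^{(m+1)}$. Getting the paper's bound $C_1 r^{-m}\|g^{(m)}\|_I$ then needs an extra step: the interpolant reproduces the degree-$(m-1)$ Taylor polynomial $T$ of $g$, so $\|g-p\|_I\le(1+\Lambda_{m+1})\|g-T\|_I$.

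Two steps of your proposal do fail. First, the fallback for $r<m-1$. Interpolating at all $r+1$ nodes gives an error term in $\|g^{(r+1)}\|_\infty$, which $\eta$ and $\|g^{(m)}\|_\infty$ do not control when $r+1<m$. No argument can repair this, because the lemma is false in that regime. Take $g(t)=A\prod_{k=0}^{r}(t-k/r)$, a polynomial of degree $r+1\le m-1$: then $g^{(m)}\equiv 0$ and $g(k/r)=0$ for every $k$, yet $\|g\|_\infty$ grows linearly in $A$ (e.g., $r=1$, $m=3$, $g(t)=At(t-1)$). Drop the fallback and add the hypothesis $r\ge m-1$. The paper's proof tacitly needs $r\ge m$ to have $m+1$ nodes at all, and the restriction is harmless in the application, since $r$ corresponds to $T/\nu\to\infty$.

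Second, your windows degenerate when $m=1$. Then $I=[j/r,j/r]$ is a single point, so the windows do not cover $[0,1]$, and the factor bound $|t-x_i|\le (m-1)/r=0$ is false. Your final inequality would read $|g(t)|\le\eta$, which $g(t)=\sin(\pi r t)$ refutes. Use windows of length $\max(m-1,1)/r$; for $m=1$, the nearest node gives $|g(t)|\le\eta+\|g'\|_\infty/(2r)$. The rest of your argument then goes through unchanged.
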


\begin{proof}
Let $t_k = k/r$ and $y_k = g(k/r)$.
For $t \in [0,1]$, let $p$ be the Lagrange polynomial of degree $\le m$ interpolating $(t_{k_j}, y_{k_j})$ where the $t_{k_j}$ are the $m+1$ closest grid points to $t$ (ties broken arbitrarily). Let $I = [\min_j t_{k_j}, \max_j t_{k_j}]$. 
By the Lagrange remainder formula and the fact that the spacing is $1/r$,
\[
\|g - p\|_I \le C_1 r^{-m} \|g^{(m)}\|_{I},
\]
for $C_1$ depending only on $m$.
Now, $\|p\|_I \le C_2 \eta$ where $C_2$ is the Lebesgue constant for polynomial interpolation at $m+1$ equispaced points.
Thus, we have 
\begin{align}
|g(t)| 
&\le \|g\|_I \le \|p\|_I + \|g-p\|_I \\
&\le C_2 \eta + C_1 r^{-m} \|g^{(m)}\|_{I}  \\
&\le C_2 \eta + C_1 r^{-m} \|g^{(m)}\|_\infty,
\end{align}
completing the proof since $t$ is arbitrary.
\end{proof}

\subsection{Confidence bands}
\label{sec:bands}
When seeing $\X$ on $[0,T]$ as a continuous function on that interval with values in $\bbR^d$, a {\em confidence region} for $\X$ is most directly interpreted to mean a subset of $\cC[0,T]$. This is how we used that term in \secref{plugin} and \secref{bootstrap}.
However, the term {\em confidence band} is often used in similar contexts. This comes from visualizing trace on the underlying Euclidean space $\bbR^d$ of the subset of functions constituting the confidence region. For example, for a confidence region of the form
\begin{align}\label{conf region}
\wh\cR_{1-\alpha} = \Big\{\Y \in \cC[0,T] : \|\Xh - \Y\|_T \le \hat r_{1-\alpha}\Big\},
\end{align}
its trace is the following domain of $\bbR^d$
\begin{align}\label{conf band plugin}
\wh\cB_{1-\alpha} 
&= \Big\{y \in \bbR^d : \inf_{t \in [0,T]} \|\Xh(t) - y\| \le \hat r_{1-\alpha}\Big\} \\
&= \bigcup_{t \in [0,T]} \overline\ball\big(\Xh(t), \hat r_{1-\alpha}\big),
\end{align}
which can be described as a `band' around the estimated gradient ascent line $\Xh([0,T])$ --- sometimes also called a tubular neighborhood, here of radius $\hat r_{1-\alpha}$. 

If the region \eqref{conf region} has exact level $1-\alpha$ asymptotically for the gradient ascent flow $\X$, it implies that the band \eqref{conf band plugin} has level $1-\alpha$ asymptotically for the gradient line $\X([0,T])$, or in formula,
\begin{align}\label{conf band plugin level}
\liminf_{n \to \infty} \P\big(\X([0,T]) \in \wh\cB_{1-\alpha}\big) \ge 1-\alpha.
\end{align}
In particular, the confidence band may be conservative in the large-sample limit. 

\begin{remark}
As a point of curiosity, it appears possible to build a confidence band with exactly the desired level asymptotically. 
For that, define the band around $\Xh([0,T])$ of thickness $r$ as 
\begin{align}
\wh\cB(r) = \Big\{y \in \bbR^d : \inf_{t \in [0,T]} \|\Xh(t) - y\| < r\Big\} = \bigcup_{t \in [0,T]} \ball(\Xh(t), r).
\end{align}
For $r > 0$, define the time of the first exit from the band
\[\hat\tau_r = \inf\big\{t \ge 0: \U(t) \notin \wh\cB(r)\big\}.\] 
Define 
\[\tilde\psi_r(s,y) = \P_\U\big(\hat\tau_r > T \mid \U(s) = y\big),\]
where the probability is only with respect to $\U$, with everything else, in particular $\wh\cB(r)$, being held fixed.
Then, as in \eqref{backward equation}, $\tilde\psi_r$ satisfies the following backward equation
\begin{align}
-\tilde\psi_r'(s, y) = y^\top \nabla^2 f(\X(s)) \nabla\tilde\psi_r(s, y) + \frac{1}{2} \trace \left[ B(s)^2 \, \nabla^2\tilde\psi_r(s, y) \right],
\end{align}
for $0 \le s < T$, with terminal conditions $\tilde\psi_r(s, y) = 0$ for $y \in \partial\wh\cB(r)$ and $\tilde\psi_r(T, y) = 1\{y \in \ball(0,q)\}$. 
The idea would be to choose $r$ such that $\tilde\psi_r(0, 0) = 1-\alpha$.
Although $\tilde\psi_r$ is not directly available as it depends on the distribution of $\U$, we could again estimate it by plugin, specifically, by $\hat\psi_r$, solution to
\begin{align}
-\hat\psi_r'(s, y) = y^\top \nabla^2 \fh(\Xh(s)) \nabla\hat\psi_r(s, y) + \frac{1}{2} \trace \left[ B(s)^2 \, \nabla^2\hat\psi_r(s, y) \right].
\end{align}
(A bootstrap approach seems possible as well.)
One would then let $\hat r$ be such that $\hat\psi_r(0, 0) = 1-\alpha$, and return the confidence band $\wh\cB({\hat r})$ --- or its closure. 
\end{remark}

\subsection{Matrix weighted process}
\label{sec:matrix weighted}
We discuss here in less detail a weighting scheme more elaborate than that of \secref{spherically weighted} that should be intuitively understood as being closer to Studentizing $\Xh(t) - \X(t)$. This takes place at the level of \eqref{DecomPart1}. Indeed, denote 
\begin{align}
G_\ddag(t) 
&:= \frac{1}{h^{d+1}} \bbE\big[\cE(p_t)\cE(p_t)^\top\big] \\
&= \frac{1}{h^{d+1}}\int_0^t \int_0^t \bbE\bigg[\nabla K\bigg(\frac{\X(s)-X}{h}\bigg) \nabla K\bigg(\frac{\X(u)-X}{h}\bigg)^{\!\!\top} \bigg] duds,
\end{align}
and let $B_\ddag(t)$ be symmetric satisfying  
\begin{align}
B_\ddag(t)^2 = G_\ddag'(t) = H_\ddag(t) + H_\ddag(t)^\top,
\end{align}
with 
\begin{align}
H_\ddag(t) :=
\frac{1}{h^{d+1}}\int_0^t \bbE\bigg[\nabla K\bigg(\frac{\X(t)-X}{h}\bigg) \nabla K\bigg(\frac{\X(s)-X}{h}\bigg)^{\!\!\top}\bigg] ds. 
\end{align}
Finally, let
\begin{align}
\Sigma_\ddag(t) :=  M(t) \bigg[\int_0^t M(s)^{-1} B_\ddag(s)^2 M(s)^{-\top} ds \bigg] M(t)^\top.
\end{align}
Following derivations analogous to those below \eqref{bbE E E}, for any fixed $t\in(0,T]$, we can show $B_\ddag^2(t)\to B^2(t)$ as $h\to0$, and hence $\Sigma_\ddag(t)^{-1/2} \to \Sigma(t)^{-1/2}$, which then implies 
\begin{align}
\sqrt{nh^{d+1}}\Sigma_\ddag^{-1/2}(\Xh-\X) \longto \Sigma^{-1/2} U,
\end{align}
weakly in $\cC [\eps,T]$ for any $\eps > 0$.
In addition, when $t\to0$, it can also be shown using similar arguments that
\begin{align}
\label{C0}
B_\ddag(t)^2 
\sim \frac{2t}{h} f(x_0) C_0, \quad \text{where } 
C_0 := \int_{\bbR^d} \nabla K(z) \nabla K(z)^\top dz,
\end{align}
which then implies that
$\Sigma_\ddag(t) 
\sim (t^2/h) C_0.$
Hence, applying a Taylor development, we have
\begin{align}
\sqrt{nh^{d+1}} \Sigma_\ddag(t)^{-1/2} (\Xh(t)-\X(t))
&\sim \sqrt{nh^{d+1}} \big((t^2/h) f(x_0) C_0\big)^{-1/2} \big(x_0 + t \nabla \fh(x_0) - x_0 -t\nabla f(x_0)\big) \\
&= \sqrt{nh^{d+2}} f(x_0)^{-1/2} C_0^{-1/2} (\nabla \fh(x_0) - \nabla f(x_0)) \\
& \weak \cN(0, I),
\end{align}
as is well-known. In other words, $\sqrt{nh^{d+1}}\Sigma_\ddag(t)^{-1/2}(\Xh(t)-\X(t))$ is Studentized for both fixed $t\in(0,T]$ and $t\to0$. However, $\Sigma^{-1/2}(t)U(t)$ is not the limiting process because $\Sigma^{-1/2}(t)U(t)$ is ill-defined when $t\to0$ by observing that $U(t) = B(0) W(t) + o_P(\sqrt{t})$ and $\Sigma(t) \sim t B(0)^2$ as $t\to0$, with $B(0)$ singular. We introduce an $h$-dependent Gaussian process $\U_\ddag$ satisfying $\U_\ddag(0)=0$ and  
\begin{align}
d\U_\ddag(t)
= \nabla^2 f(\X(t))\U_\ddag(t)dt + B_\ddag(t) dW(t).
\end{align}
It can be verified that $\U_\ddag(t)$ has mean zero and covariance matrix $\Sigma_\ddag(t)$, and $\sqrt{nh^{d+1}}\Sigma_\ddag(t)^{-1/2}(\Xh(t)-\X(t))$ and $\Sigma_\ddag(t)^{-1/2}\U_\ddag(t)$ have the same asymptotic distribution, both when $t$ is fixed and $t\to0$. Note that $\U_\ddag \weak U$ because $B_\ddag^2(t)\to B^2(t)$ as $h\to0$. 

To account for the Law of the Iterated Logarithm (LIL) behavior of the Gaussian processes (see \eqref{Wt unbounded} and \eqref{Wt bounded}), we introduce the deterministic weight function $\rho_0 : [0,T] \to (0, \infty)$ which is continuous and such that $\rho_0(t) \gg \sqrt{\log\log(1/t)}$ as $t \to 0$. By following analogous arguments in the proof of Theorem \ref{thm:weighted_convergence}, we believe one can show that $\sqrt{nh^{d+1}}\rho_0^{-1}\Sigma_\ddag(t)^{-1/2}(\Xh(t)-\X(t))$ and $\rho_0^{-1}\Sigma_\ddag(t)^{-1/2}\U_\ddag(t)$ converge weakly in the space $\cC[0,T]$ to the same limit, and by the Continuous Mapping Theorem, $\sqrt{nh^{d+1}}\|\rho_0^{-1}\Sigma_\ddag(t)^{-1/2}(\Xh(t)-\X(t))\|_T$ and $\|\rho_0^{-1}\Sigma_\ddag(t)^{-1/2}\U_\ddag(t)\|_T$ have the same weak convergence limit.

Let $\widehat\Sigma_\ddag(t)$ be the plugin estimator of $\Sigma_\ddag(t)$. The matrix weighted version of a confidence band is 
\begin{align}
\wh\cR_{1-\alpha}^{\rm plugin} = \Big\{\Y \in \cC[0,T] : \|\rho_0^{-1}\widehat \Sigma_\ddag^{-1/2}(\Xh - \Y)\|_{T} \le \hat q_{1-\alpha}/\sqrt{n h^{d+1}}, \; \Y(0)= x_0\Big\},
\end{align}
where $\hat q_{1-\alpha}$ now denotes the $(1-\alpha)$-quantile of $\|\rho_0^{-1}\wh\Sigma_\ddag^{-1/2}\Uh_\ddag\|_{T}$ with $\Uh_\ddag$ satisfying $\Uh_\ddag(0)=0$ and
\begin{align}
d\Uh_\ddag(t)
= \nabla^2 \fh(\Xh(t))\Uh_\ddag(t)dt + \wh B_\ddag(t) dW(t).
\end{align}
If the smoothed bootstrap is used, then $\hat q_{1-\alpha}$ can be replaced by $\hat p_{1-\alpha}$ which is the $(1-\alpha)$-quantile of $\|\rho_0^{-1}(\wh\Sigma_\ddag^*)^{-1/2}(\Xh^* - \Xh)\|_{T}$.

\subsection*{Acknowledgements}
We are grateful to Ruth Williams for helpful discussions and pointers to the literature that led quite directly to the proof of \lemref{Q quant}.

\bibliographystyle{chicago}
\bibliography{../ref}

\appendix
\section{More on the construction of confidence regions}
\label{sec:app_conf}

\subsection{When is $\Sigma(t)$ singular?}
\label{sec:Sigma singular}

We argue that, generically, $\Sigma(t)$ is non-singular for all $t > 0$.

\begin{lemma}\label{lem:B psd}
For each $t \ge 0$, $B(t)$ is singular. In fact, if the kernel function $K$ is spherically symmetric, $B(t) = c f(\X(t))^{1/2} \|\X'(t)\| P(t)$ where $P(t) = I - \X'(t) \X'(t)^\top/\|\X'(t)\|^2$, and $c$ is a constant that only depends on $K$.
\end{lemma}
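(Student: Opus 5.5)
The plan is to work directly with the matrix $G(v) := \int_{\bbR}\int_{\bbR^d} \nabla K(z)\nabla K(\tau v + z)^\top dz\,d\tau$ for a fixed nonzero vector $v$ (here $v = \nabla f(\X(t)) = \X'(t)$), so that $B(t)^2 = f(\X(t))\, G(\X'(t))$.

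\textbf{Singularity.} We show $G(v)v = 0$, i.e., the direction of motion is in the kernel. For fixed $z$, observe that
\[
\nabla K(\tau v + z)^\top v = \frac{d}{d\tau} K(\tau v + z),
\]
so
\[
\int_{\bbR} \nabla K(\tau v+z)^\top v \, d\tau = \lim_{\tau\to\infty}K(\tau v+z) - \lim_{\tau\to-\infty}K(\tau v+z) = 0,
\]
since $K$ has compact support (\aspref{K}). Multiplying by $\nabla K(z)$ and integrating over $z$ (Fubini is fine, as the integrand is bounded and supported on a bounded set, by the argument around \eqref{tau bounded}) gives $G(v)v = 0$. Taking the symmetric square root, $B(t)v = 0$ as well, whence $B(t)$ is singular. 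Here we use that $G(v)$ is symmetric positive semidefinite. Symmetry follows from the substitution $z \mapsto z - \tau v$, $\tau\mapsto -\tau$. Positive semidefiniteness follows because $G(v)$ is the limit in \eqref{COvpart2} of covariance matrices, divided by $f(\X(t))$; alternatively, write $G(v) = \int_{v^\perp} g(w)g(w)^\top dw/\|v\|$ with $g(w) = \int_{\bbR}\nabla K(w+\sigma v/\|v\|)\,d\sigma$, by decomposing $z = w + \sigma u$ with $u = v/\|v\|$. This last representation is the cleanest, so I would prove it first and deduce everything from it.

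\textbf{Spherical case.} Use the representation $G(v) = \|v\|^{-1}\int_{u^\perp} g(w)g(w)^\top dw$, obtained by the change of variables $z = w + \sigma u$, $\tau v + z = w + (\sigma+\tau\|v\|)u$, and integrating over $\tau$ (Jacobian $1/\|v\|$). Note that $u^\top g(w) = \int \partial_\sigma K(w+\sigma u)\,d\sigma = 0$, consistent with the first part. If $K(x) = \textsc{k}(\|x\|^2)$, then $g(w) = \int 2\textsc{k}'(\|w\|^2+\sigma^2)(w+\sigma u)\,d\sigma = w\,\phi(\|w\|^2)$ by oddness in $\sigma$. Hence $\int_{u^\perp} g g^\top dw = c_0 P$ by rotational invariance within $u^\perp$ (a $(d-1)$-dimensional isotropic second moment), where $c_0 = \frac{1}{d-1}\int_{\bbR^{d-1}} \|w\|^2\phi(\|w\|^2)^2 dw$ depends only on $K$. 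Thus $B(t)^2 = f(\X(t))\,c_0\,P(t)/\|\X'(t)\|$, and since $P$ is a projection, $B(t) = \sqrt{c_0}\, f(\X(t))^{1/2}\|\X'(t)\|^{-1/2} P(t)$.

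\textbf{Obstacle.} The main obstacle I anticipate is that this yields the power $\|\X'(t)\|^{-1/2}$, whereas the statement says $\|\X'(t)\|$. I would carefully recheck the Jacobian of the substitution, since the scaling $\tau\mapsto \tau/\|v\|$ unambiguously produces a factor $1/\|v\|$ in $B^2$. If it persists, the correct form is the one derived here, and the statement's exponent should be read accordingly; the structural claim $B(t)\propto P(t)$ is unaffected. Also, $\nabla f(\X(t)) \neq 0$ on $[0,T]$ is guaranteed by \eqref{gamma}.
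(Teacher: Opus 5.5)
Your argument is correct and follows the same route as the paper's proof. Both split $z$ along $v=\nabla f(\X(t))$ and $v^\perp$, write the matrix as an integral of rank-one terms $g(w)g(w)^\top$, and use oddness in the radial case. The rank-one form gives symmetry and positive semidefiniteness, and the fundamental theorem of calculus puts $v$ in the null space.

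Your exponent is the right one; it is the statement that is in error. The paper's proof makes two slips:
\begin{itemize}
\item it drops the Jacobian $\|v\|$ of the substitution $z=sv+Nw$;
\item it writes $L_v(w)=\lambda(\|w\|)\|v\|Nw$, whereas rescaling $s\|v\|\mapsto s$ gives $\lambda(\|w\|)\|v\|^{-1}Nw$.
\end{itemize}
Done correctly, $H(v)=\|v\|\int L_v(w)L_v(w)^\top dw=c^2\|v\|^{-1}NN^\top$, not $c^2\|v\|^2NN^\top$.

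You do not need to recheck your Jacobian. For $a>0$, the substitution $\tau\mapsto\tau/a$ gives $H(av)=a^{-1}H(v)$. This rules out the factor $\|\X'(t)\|$ and confirms $B(t)=c\,f(\X(t))^{1/2}\|\X'(t)\|^{-1/2}P(t)$, with your $\sqrt{c_0}$ equal to the paper's $c$.

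The singularity claim and the proportionality $B(t)\propto P(t)$ are unaffected, and these are all that \secref{Sigma singular} uses. The explicit constants in \secref{Sigma t=0} inherit the correction, e.g.\ $\Sigma_1$ should be $f_0\|v_0\|^{-1}P_0$.
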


\begin{proof}
For a direct view, fix $x \in \bbR^d$, let $v=\nabla f(x)$ and let $N$ be a $d\times (d-1)$ orthogonal matrix such that $N^\top v = 0$. 
Using the change of variables $z = s v + N w$ for $s\in\bbR$ and $w\in \bbR^{d-1}$, we have 
\begin{align}
H(v)
&= \int_{\bbR}\int_{\bbR^d} \nabla K(z) \nabla K\big(t v + z\big)^\top dz dt \\
&= \int_{\bbR^{d-1}} \int_{\bbR}\int_{\bbR} \nabla K(s v + N w) \nabla K\big(t v + s v + N w\big)^\top dt ds dw, \\
& = \int_{\bbR^{d-1}} L_v(w) L_v(w)^\top dw, \label{B psd}
\end{align}
where $L_v(w) = \int_{\bbR} \nabla K(s v + N w) ds$.
Then, for all $w$,
\begin{align}
L_v(w)^\top v
&= \int_{\bbR} \nabla K(s v + N w)^\top v\, ds 
= K(s v + N w) \big|_{s=-\infty}^{s=\infty} = 0.
\end{align}
We have thus shown that $H(v)$ is psd and singular with $H(v)^\top v = 0$. Given that, for any $t \ge 0$, $B(t) = f(\X(t))^{1/2} H(\X'(t))^{1/2}$, we have established that $B(t)$ is psd and singular.

When $K$ is spherically symmetric, say of the form $K(x) = \textsc{k}(\|x\|^2)$, then
\begin{align}
L_v(w) 
&= \int_\bbR 2 \textsc{k}'(s^2 \|v\|^2 + w^2)(sv + N w) ds \\
&= \lambda(\|w\|) \|v\| N w, \qquad \lambda(\|w\|) = 2 \int_\bbR \textsc{k}'(s^2 + w^2) ds.
\end{align}
Therefore,
\begin{align}
H(v) 
&= \int_{\bbR^{d-1}} \lambda(\|w\|) \|v\| N w \big(\lambda(\|w\|) \|v\| N w\big)^\top dw \\
&= \|v\|^2 N \bigg(\int_{\bbR^{d-1}} \lambda(\|w\|)^2 w w^\top dw\bigg) N^\top \\
&= c^2 \|v\|^2 N N^\top, \qquad c^2 = \frac1{d-1} \int_{\bbR^{d-1}} \lambda(\|w\|)^2 \|w\|^2 dw.
\end{align}
Note that $N N^\top$ is the orthoprojector onto $v^\perp$, and can be expressed as $N N^\top = I - v v^\top/\|v\|^2$.
Since $B(t) = f(\X(t))^{1/2} H(\X'(t))^{1/2}$, we can conclude.
\end{proof}

Assume that $K$ is spherically symmetric so that the second part of \lemref{B psd} applies. We investigate under what conditions $\Sigma(t)$ is nonsingular. 
Therefore, fix $t > 0$, and for $a \in \bbR^d$ define $b = M(t)^\top a$, and derive
\begin{align}
a^\top \Sigma(t) a
&= \int_0^t (M(t)^\top a)^\top M(s)^{-1} B(s)^\top B(s) M(s)^{-\top}  (M(t)^\top a) ds \\
&= 0 \iff B(s) M(s)^{-\top} b = 0,\quad \forall s \in [0,t].
\end{align}
Applying \lemref{B psd}, we have $B(s) = c f(\X(s))^{1/2} \|\X'(s)\| P(s)$, so that
\begin{align}
B(s) M(s)^{-\top} b = 0
\iff P(s) M(s)^{-\top} b = 0
\iff b \propto M(s)^\top \X'(s) = M(s)^\top M(s) v_0,
\end{align}
where $v_0 = \X'(0) = \nabla f(x_0)$, since $\X'(s) = M(s) v_0$. 
We thus arrive at the fact that $\Sigma(t)$ is singular if and only if $M(s)^\top M(s) v_0$ is constant for $s \in [0,t]$. 
We quickly note that, since $\X'(s) = M(s) v_0$,
\begin{align}
M(s)^\top M(s) v_0 \text{ constant for $s \in [0,t]$}
\implies \|\X'(s)\| \text{ constant for $s \in [0,t]$},
\end{align}
which is already non-generic. Going deeper, noting that $\frac{d}{ds} M(s)^\top M(s) v_0 = 2 M(s)^\top A(s) M(s) v_0$, and using the fact that $M(s)$ is nonsingular, we get
\begin{align}
M(s)^\top M(s) v_0 \text{ constant for $s \in [0,t]$}
&\iff A(s) M(s) v_0 = 0 \text{ for $s \in [0,t]$} \\
&\iff A(s) \X'(s) = 0 \text{ for $s \in [0,t]$} \\
&\iff \X''(s) = 0 \text{ for $s \in [0,t]$} \\
&\iff \X'(s) = v_0 \text{ for $s \in [0,t]$} \\
&\iff A(s) v_0 = 0 \text{ for $s \in [0,t]$},
\end{align}
since $\X'(s) = M(s) v_0$ and $\X''(s) = A(s) \X'(s)$. 
Continuing, this is equivalent to $\X(s) = x_0 + s v_0$ with $\nabla^2 f(x_0 + s v_0) v_0 = 0$ for all $s \in [0,t]$, and the second part is equivalent to $f(x_0 + s v_0) = f(x_0) + s \|v_0\|^2$ for all $s \in [0,t]$. This is an even less generic situation, but it is possible, e.g., if $f(x) = v_0^\top x$ in a neighborhood of $x_0$.  

\subsection{Behavior of $\Sigma(t)$ as $t \to 0$}
\label{sec:Sigma t=0}
We investigate the behavior of $\Sigma(t)$, defined in \eqref{Sigma}, as $t \to 0$.

In \secref{Sigma singular}, we have argued that, generically, $\Sigma(t)$ is non-singular for all $t > 0$. However, even if this is the case, some serious issues remain. Indeed, as $t\to 0$, the leading term in $\Sigma(t)$ is singular in that $\Sigma(t) \sim t B(0)^2$. And because, as $t\to 0$, 
\begin{align}\label{at 0}
\Xh(t)-\X(t) \sim t (\Xh'(0)-\X'(0)) = t (\nabla \fh(x_0) - \nabla f(x_0)),
\end{align} 
with 
\begin{align}\label{nabla at 0}
\sqrt{n h^{d+2}}(\nabla \fh(x_0) - \nabla f(x_0)) \weak \cN(0, c_1 f(x_0) I),
\end{align} 
where $c_1$ depends only on $K$ \cite{mokkadem2003law}, indicating that $\Xh(t)-\X(t)$ has a spherically symmetric distribution as $n \to \infty$ and $t\to 0$, there is the potential for $\Sigma(t)^{-1/2}(\Xh(t)-\X(t))$ to misbehave when $t\to0$.

To go further, assume $K$ is spherically symmetric. Then, using \lemref{B psd} and going through some laborious calculations, one finds that
\begin{align}
\Sigma(t) 
= c^2 t \Sigma_1 + \tfrac12 c^2 t^2 \Sigma_2 + O(t^3)
\end{align}
where $c$ is the constant of \lemref{B psd} and   
\begin{multline}
\Sigma_1 := f_0 \|v_0\|^2 P_0, \qquad 
\Sigma_2 := \|v_0\|^4 P_0 + 2 f_0 (v_0^\top H_0 v_0) I  
+ 2 f_0 \|v_0\|^2 H_0 - 2 f_0 (H_0 v_0 v_0^\top + v_0 v_0^\top H_0),
\end{multline}
with 
\begin{align}
f_0 := f(x_0), && v_0 := \nabla f(x_0), && H_0 := \nabla^2 f(x_0), && P_0 := I - v_0 v_0^\top/\|v_0\|^2.
\end{align}
It can be checked that $\Sigma_1 v_0 = \Sigma_2 v_0 = 0$, so that $\Sigma(t) v_0 = O(t^3)$ as $t\to 0$. Using an eigendecomposition of $\Sigma(t)$, we derive
\[
\|\Sigma(t)^{-1/2} v_0\| \ge d^{-1/4} \|v_0\|^{3/2} \|\Sigma(t) v_0\|^{-1/2},
\]
so that $t \Sigma(t)^{-1/2} v_0$ diverges as $t\to 0$. As we saw in \eqref{at 0}-\eqref{nabla at 0}, $\Xh(t)-\X(t) \sim t (\hat v_0-v_0)$ as $t\to 0$ with $\hat v_0-v_0$ having an approximately spherically symmetric distribution as $n \to \infty$, and so it does not bode well for $\Sigma(t)^{-1/2}(\Xh(t)-\X(t))$ as $t \to 0$ and $n \to \infty$. 
(And there is the compounding issue that $\sqrt{n h^{d+1}} \|\hat v_0-v_0\| \to \infty$ in probability as $n \to \infty$.)

\section{Proof of \thmref{weighted_convergence}}
\label{sec:weighted_convergence_proof}
Because the boundary weight $1/\rho(t)$ introduces a severe analytical singularity at the origin $t=0$, establishing stochastic equicontinuity directly on the entire space $\cC[0, T]$ is technically intractable. We resolve this by invoking the Approximation Theorem for weak convergence \cite[Th~3.2]{billingsley1999convergence}. This theorem establishes that a sequence of random elements $X_n$ in a metric space $(\cS,d)$ converges weakly to a true limit $X$ if it can be approximated by a sequence $X_{n, \delta}$ satisfying three conditions: (i) the approximator converges weakly to a truncated limit $X_\delta$ as $n \to \infty$ for any fixed $\delta > 0$, (ii) the truncated limit $X_\delta$ converges to the true limit $X$ as $\delta \downarrow 0$, and (iii) the approximation error is asymptotically negligible, meaning that for any threshold $\eta > 0$, the discrepancy satisfies 
\[\lim_{\delta \downarrow 0} \limsup_{n \to \infty} \P(d(X_{n, \delta}, X_n) > \eta) = 0.\]

As usual, we equip the space $\cC[0, T]$ with the uniform metric $d(\X,\Y) = \sup_{t \in [0, T]} \|\X(t) - \Y(t)\|$. We define the true tracking error sequence and its corresponding continuous Gaussian limit point-by-point as
$$X_n(t) := \frac{\sqrt{nh^{d+1}} (\Xh(t) - \X(t))}{\rho(t)} \quad \text{and} \quad X(t) := \frac{U(t)}{\rho(t)}.$$
To ensure the approximators remain in $\cC[0, T]$, we define them by holding their trajectories constant on the boundary interval $[0, \delta]$, matching the value at the threshold $\delta$, that is, 
$$X_{n, \delta}(t) := X_n(t \vee \delta) \quad \text{and} \quad X_\delta(t) := X(t \vee \delta).$$
Consequently, the proof proceeds in three distinct phases. 
First, we establish the weak convergence $X_{n, \delta} \longweak X_\delta$ on the compact subinterval $[\delta, T]$, where the weight $\rho(t)$ is safely bounded away from zero.
Second, we prove the asymptotic negligibility condition by demonstrating that the approximation error vanishes in probability using the bound
\[d(X_{n, \delta}, X_n) = \sup_{t \in [0, \delta]} \|X_n(\delta) - X_n(t)\| \le 2 \sup_{t \in (0, \delta]} \|X_n(t)\|.\]
Finally, we demonstrate that $X_\delta \longweak X$ as $\delta \downarrow 0$, formally stitching the interior and boundary regimes together to conclude.

\paragraph{Step 1: Weak convergence on the compact subinterval}
By \thmref{weak}, the unweighted standardized tracking error converges weakly to the continuous Gaussian process: $\sqrt{nh^{d+1}} (\Xh - \X) \longweak U$ in $\cC[0, T]$. For any fixed $\delta > 0$, the truncated weight function $\rho(t \vee \delta)$ is bounded away from zero. Consequently, the functional mapping $y(t) \mapsto y(t \vee \delta) / \rho(t \vee \delta)$ is globally continuous on the space $\cC[0, T]$. Therefore, applying the Continuous Mapping Theorem implies that
\begin{equation}
\frac{\sqrt{nh^{d+1}} (\Xh(t \vee \delta) - \X(t \vee \delta))}{\rho(t \vee \delta)} \longweak \frac{U(t \vee \delta)}{\rho(t \vee \delta)}, \quad \text{as } n \to \infty,
\end{equation}
as processes in $\cC[0, T]$.
This formally satisfies Condition (i) of the Approximation Theorem.

\paragraph{Step 2: Remainder bound}
From the proof of \prpref{decompo}, the remainder satisfies 
\[\|\Dt(t)\| \leq \Delta_n t \|\Xh - \X\|_t,\]
for all $t \le T$, for some $\Delta_n = o_{\P}(1)$, from which we deduce
\begin{equation}
\|\Dt(t)\|/\rho(t) \leq \Delta_n \big[t \|\rho\|_\infty/\rho(t)\big]\, \|(\Xh - \X)/\rho\|_t,
\end{equation}
and taking the supremum over $(0, \delta]$, we obtain
\begin{equation}
\|\Dt/\rho\|_\delta \leq \wt\Delta_n \|(\Xh - \X)/\rho\|_\delta,
\end{equation}
with $\wt\Delta_n = o_{\P}(1)$, using the fact that $t/\rho(t)$ is bounded on $[0,T]$.
Recall the definition of $\Ut$ in \eqref{Ut} and use the triangle inequality to deduce that
\begin{equation}
    (1 - \wt\Delta_n) \sqrt{nh^{d+1}}\|(\Xh - \X)/\rho\|_\delta \leq \|\Ut/\rho\|_\delta.
\end{equation}
Hence, to establish that
\begin{align}
\lim_{\delta \downarrow 0} \limsup_{n \to \infty} \P\Big(\sqrt{nh^{d+1}}\|(\Xh - \X)/\rho\|_\delta > \eta\Big) = 0, \quad \forall \eta > 0,
\end{align}
it suffices to show that 
\begin{align} \label{Ut need to show}
\lim_{\delta \downarrow 0} \limsup_{n \to \infty} \P\Big(\|\Ut/\rho\|_\delta > \eta\Big) = 0, \quad \forall \eta > 0.
\end{align}

\paragraph{Step 3: Integral representation and empirical process form}
From \prpref{decompo}, $\Ut$ satisfies
\begin{equation}
    \frac{d}{dt} \Ut(t) = A(t) \Ut(t) + \sqrt{nh^{d+1}} \big[\nabla\hat{f}(\X(t)) - \nabla f(\X(t))\big],
\end{equation}
with the initial condition $\Ut(0) = 0$.
One can verify that 
\begin{equation}
    \Ut(t) = \sqrt{nh^{d+1}} \int_0^t \Phi(t, s) \big[\nabla\hat{f}(\X(s)) - \nabla f(\X(s))\big] ds,
\end{equation}
where
\begin{align}
\Phi(t,s) := M(t) M(s)^{-1}, \quad \text{with $M$ defined in \eqref{A}.}
\end{align}

By the fact that $A$ and $M$ in \eqref{A} are continuous and, for $M$, non-singular, on the whole of $[0,T]$, $\Phi$ is bounded on $[0,T]^2$. Hence, combining with \eqref{EVHRate}, we get the following bound on the mean of $\Ut$
\begin{align}
\|\E\Ut(t)\|
&\le \sqrt{nh^{d+1}} \int_0^t \|\Phi(t, s)\|\, \big\|\nabla\hat{f}(\X(s)) - \nabla f(\X(s))\big\| ds \\
&\le \sqrt{nh^{d+1}}\, \|\Phi\|_{T,T}\, \sup_{x \in \bbR^d} \|\nabla\hat{f}(x) - \nabla f(x)\big\|\, t \\
&\le \sqrt{nh^{d+1}}\, \|\Phi\|_{T,T}\, C_1 h^{(m_0-1) \wedge (k_0+1)}\, t \\
&= C_2 \sqrt{nh^{d+1+ 2 [(m_0-1) \wedge (k_0+1)]}}\, t 
=: \Delta_n t,
\end{align}
where $\Delta_n = o(1)$ by our assumptions on $h$. 
In particular, since the above is uniform in $t \in [0,T]$,
\begin{equation}\label{global mean}
\|\E \Ut /\rho\|_T \le \Delta_n \sup_{t \in (0, T]} (t/\rho(t)) = o(1),
\end{equation}
by our assumptions on $\rho$.

Next we analyze the centered process 
\[\Ut(t) - \E \Ut(t) = \sqrt{nh^{d+1}} \big[\widehat{\cJ}(P_t) - \E\widehat{\cJ}(P_t) \big],\] 
where $\widehat{\cJ}$ and $\cJ$ are defined in \eqref{whJp}, and $P_t(s) = \Phi(t, s)\1_{[0,t]}(s)$. 
We do so separately on $(0, t_n]$ and $[t_n, \delta]$, for a carefully chosen sequence $t_n \to 0$.
Recall \eqref{etavar} and let
\[L_n := \sqrt{nh^{d+1}} \big\|\nabla \hat{f} - \E\nabla \hat{f}\big\| = \sqrt{n h^{d+1}} \hat\eta^{\rm sd}_1 = O_{\P}(\sqrt{\log n / h}).\]

\medskip\noindent
• \textit{Bound on $(0, t_n]$.} \quad
Using $\|\mathscr{U}v\|_t \le e^{\kappa_2 t} \|v\|_t$ for any $v\in \cC_0[0,T]$, we have 
$$\|\Ut(t) - \E \Ut(t)\| \le e^{\kappa_2 t} \|\Vt(t) - \E \Vt(t)\| \le t e^{\kappa_2 t} L_n.$$ 
Dividing by $\rho(t)$ yields 
\begin{align}\label{extreme centered}
\sup_{t \in (0, t_n]} \frac{\|\Ut(t) - \E\Ut(t)\|}{\rho(t)} \le  e^{\kappa_2 t_n} L_n \sup_{t \in (0, t_n]} \frac{t}{\rho(t)} = O(L_n \sqrt{t_n}) = O_{\P}\left(\sqrt{\frac{t_n \log n}{h}}\right).
\end{align} 
We impose $t_n \ll h / \log n$, so that this term converges to zero in probability. 

\medskip\noindent
• \textit{Bound on $[t_n, \delta]$.} \quad
For $t\in [t_n, \delta]$, we can write 
\begin{equation}
    \frac{\Ut(t) - \E \Ut(t)}{\rho(t)} = \frac{1}{\sqrt{n}} \sum_{i=1}^n \Big( \beta_t(X_i) - \E[\beta_t(X_i)] \Big) =: \bbG_n(\beta_t),
\end{equation}
where 
\[\beta_t(x) := \frac{1}{\rho(t)\sqrt{h^{d+1}}} \int_0^t \Phi(t, s) \nabla K\left(\frac{x - \X(s)}{h}\right) ds.\]
Let $\cB_n := \{\beta_t : t \in [t_n, \delta] \}$, noting that the bandwidth $h \equiv h_n$ is a deterministic sequence driven entirely by the sample size $n$. (Consequently, when evaluating the metric entropy for any fixed $n$, the bandwidth $h$ acts as a fixed constant.) The class $\cB_n$ is therefore a one-parameter family of functions indexed by $t \in [t_n, \delta]$.
Define the supremum of this empirical process 
\begin{equation}
    Z_n^\delta := \sup_{t \in [t_n, \delta]} \frac{\|\Ut(t) - \E \Ut(t)\|}{\rho(t)} = \sup_{\beta_t \in \cB_n} \|\bbG_n(\beta_t)\|.
\end{equation}

To bound this transitional empirical process, we will apply the following result, which is an immediate consequence of \cite[Cor~2.2]{chernozhukov2014gaussian}, to couple the supremum $Z_n^\delta$ to the supremum of a centered Gaussian process $Z_{n,h}^\delta$ sharing the exact same covariance structure.

\begin{lemma}
\label{lem:Cor2.2}
Suppose $\cB$ is a class of functions from $\bbR^d$ to $\bbR^d$ that is pointwise measurable, with uniform supnorm bound $b$, and $\epsilon$-covering number with respect to the supnorm bounded by $(a_1 b/\epsilon)^{a_2}$, for some constants $b > 0$, $a_1 \ge e$, and $a_2 \ge 1$. Consider a random vector $X$ on $\bbR^d$ and define\,\footnote{The moment conditions in \cite[Cor~2.2]{chernozhukov2014gaussian} are trivially satisfied in the context of the supnorm ($q=\infty$ in \cite{chernozhukov2014gaussian}) and a constant envelope.} 
$\sigma^2 := \sup_{\beta \in \cB} \E[\|\beta(X)\|^2]$. With $X_1, \dots, X_n$ iid $X$, define the empirical process and its supremum 
\[\bbG_n(\beta) := \frac1{\sqrt{n}} \sum_{i=1}^n \big(\beta(X_i) - \E[\beta(X)]\big), \qquad Z_n := \sup_{\beta \in \cB} \bbG_n(\beta).\] 
Suppose there is a Gaussian process $G_X$ on $\cB$ such that
\[\E[G_X(\beta_1) G_X(\beta_2)] = \E[\beta_1(X) \beta_2(X)], \quad \forall \beta_1, \beta_2 \in \cB.\]
Then there is a universal constant $C>0$ with the property that, for every $\gamma \in (0,1)$, there exists a random variable $\widetilde{Z}$ with distribution that of $\sup_{\beta \in \cB} G_X(\beta)$, such that
\[
\P\Bigg\{
|Z_n - \widetilde{Z}| >
\frac{b K_n}{\gamma^{1/2} n^{1/2}}
+
\frac{(b\sigma)^{1/2} K_n^{3/4}}{\gamma^{1/2} n^{1/4}}
+
\frac{(b\sigma^2 K_n^2)^{1/3}}{\gamma^{1/3} n^{1/6}}
\Bigg\}
\le
C \left( \gamma + \frac{\log n}{n} \right),
\]
where
$
K_n := C a_2 \big( \log(n) \vee \log(a_1 b/\sigma) \big).
$
\end{lemma}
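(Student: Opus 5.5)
We would derive the lemma directly from \cite[Cor~2.2]{chernozhukov2014gaussian}, which is stated for a class of \emph{real-valued} functions $\cF$ with a measurable envelope $F$, and which couples $\sup_{g\in\cF}\bbG_n(g)$ to the supremum of a Gaussian process with the same covariance. The only work is to put our vector-valued class $\cB$ in that format and to check the hypotheses. Following the footnote, we read $Z_n$ as the supremum of the norm $\|\bbG_n(\beta)\|$, which is how the lemma is used for $Z_n^\delta$.

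\textbf{Step 1 (scalarization).} Define
\[\cF := \{x\mapsto u^\top\beta(x) : \beta\in\cB,\ u\in\bbR^d,\ \|u\|=1\}.\]
Then $\sup_{\beta\in\cB}\|\bbG_n(\beta)\| = \sup_{g\in\cF}\bbG_n(g)$, by the duality $\|v\|=\sup_{\|u\|=1}u^\top v$. For the Gaussian side, set $G(u,\beta):=u^\top G_X(\beta)$. Its covariance is
\[\E[G(u_1,\beta_1)G(u_2,\beta_2)] = u_1^\top\,\E[\beta_1(X)\beta_2(X)^\top]\,u_2,\]
which is the covariance of $\cF$ under $X$. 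We interpret the covariance hypothesis matrix-wise, as the cross-covariance $\E[\beta_1(X)\beta_2(X)^\top]$, and with centering as in $\bbG_n$. Consequently $\sup_{g\in\cF}G(g)=\sup_{\beta\in\cB}\|G_X(\beta)\|$, and the coupled variable $\widetilde Z$ has the required law.

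\textbf{Step 2 (hypotheses of CCK).}
\begin{itemize}
\item \emph{Envelope.} The constant function $F\equiv b$ is an envelope for $\cF$, since $|u^\top\beta|\le\|\beta\|\le b$.
\item \emph{Variance.} $\sup_{g\in\cF}\E g(X)^2\le\sup_\beta\E\|\beta(X)\|^2=\sigma^2$.
\item \emph{Pointwise measurability.} This is inherited from $\cB$: combine the countable subclass for $\cB$ with a countable dense subset of the unit sphere.
\item \emph{Uniform entropy.} CCK require a bound on $\sup_Q N(\cF,L^2(Q),\epsilon\|F\|_{Q,2})$. A supnorm covering is an $L^2(Q)$ covering for every $Q$, and $\|F\|_{Q,2}=b$. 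Take an $\epsilon b/2$-net of $\cB$ in supnorm, which has size at most $(2a_1/\epsilon)^{a_2}$ by hypothesis (our covering bound is in absolute units, so we rescale by $b$). Take also an $\epsilon/2$-net of the sphere, which has size at most $(6/\epsilon)^d$. Their product gives
\[\sup_Q N(\cF,L^2(Q),\epsilon b)\le (a_1'/\epsilon)^{a_2+d},\]
with $a_1'\ge e$ depending only on $a_1$. The class is therefore VC-type with constants $A=a_1'$ and $v=a_2+d$.
\end{itemize}

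\textbf{Step 3 (read off the bound).} With $q=\infty$ and a constant envelope, the moment conditions in CCK are trivial, and their Cor~2.2 gives exactly the displayed three-term deviation bound with probability at most $C(\gamma+\log n/n)$. Their $K_n=cv(\log n\vee\log(Ab/\sigma))$ becomes ours after absorbing the factor $(a_2+d)/a_2\le 1+d$ and the change $a_1\mapsto a_1'$ into the constant $C$.

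\textbf{Main obstacle.} Step 2 is the delicate part. There are two points to reconcile: our supnorm covering is stated in absolute units while CCK normalize by the envelope, and the scalarization adds the sphere's $d$ dimensions. This makes $C$ depend on $d$ (which is fixed here) rather than being truly universal. Beyond that, the argument is bookkeeping.
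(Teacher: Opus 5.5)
Your proposal is correct and follows the same route as the paper, which simply invokes \cite[Cor~2.2]{chernozhukov2014gaussian} with $q=\infty$ and the constant envelope $b$. Your scalarization $\beta\mapsto u^\top\beta$ over the unit sphere, together with reading the covariance hypothesis as the centered cross-covariance $\Cov(\beta_1(X),\beta_2(X))$ and $Z_n$ as $\sup_\beta\|\bbG_n(\beta)\|$, is the bookkeeping the paper leaves implicit; it matches how the lemma is used for $Z_n^\delta$ and $U_h$, and the resulting dependence of $C$ on $d$ through $v=a_2+d$ (rather than a truly ``universal'' constant) is harmless since $d$ is fixed throughout.
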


We verify that the conditions of \lemref{Cor2.2} are met. 
First, we establish pointwise measurability. The index set $[t_n, \delta] \subset \bbR$ is a separable metric space containing a countable dense subset $\cT$. Because $\X$ and $\Phi$ are continuously differentiable, and $\rho$ is continuous and positive, the mapping $t \mapsto \beta_t(x)$ is continuous for every fixed $x$. Therefore, the countable subclass $\{\beta_t : t \in \cT\}$ satisfies the pointwise approximation requirement.

Next, we define a constant envelope for $\cB_n$.
For any fixed point $x$, the integrand in $\beta_t(x)$ is non-zero only on the set $\cS(x) := \big\{s \in [0, t] : \|x - \X(s)\| \le h \big\}$. 
We use arguments similar to those in \eqref{gamma} and \eqref{tau bounded}. 
Because $\X' \ne 0$ on $[0,T]$, by continuity, $v_{\rm min} := \inf_{0 \le s \le T} \|\X'(s)\| > 0$. 
Also, because $s \mapsto f(\X(s))$ is strictly increasing, the path $\X([0, T])$ has no self-intersections, so that it is a compact, simple, and smooth curve in $\bbR^d$, which therefore has a positive reach $\tau > 0$. This implies that, when $h < \tau/2$ (which happens eventually as $n\to\infty$), $\X([0, T]) \cap B(x, h)$ must be connected \cite[Rem 4.15(1)]{federer1959curvature}. Consequently, the total time spent in the intersection set is bounded by the maximum diameter of the ball divided by the minimum velocity, yielding $|\cS(x)| \le 2h / v_{\rm min}$.
Combining this with the uniformly boundedness of the state transition matrix $\Phi$, we establish an envelope $b_n$ over $\cB_n$ as follows
\begin{align}
\sup_{t \in [t_n, \delta]} \sup_{x \in \bbR^d} \|\beta_t(x)\| 
    &\le \sup_{t \in [t_n, \delta]} \frac{1}{\rho(t)\sqrt{h^{d+1}}} \int_{\cS(x)} \|\Phi(t, s)\| \cdot \left\| \nabla K\left(\frac{x - \X(s)}{h}\right) \right\| ds  \\
    &\le \frac{\|\Phi\|_{T,T} \|\nabla K\|_\infty (2 h/v_{\rm min})}{\rho_n \sqrt{h^{d+1}}} = : b_n. \label{envelope}
\end{align}
where $\rho_n := \inf_{t \in [t_n, \delta]} \rho(t)$.
Note that 
\begin{align}\label{b_n asymp}
b_n \asymp h^{(1-d)/2}/\rho_n.
\end{align}

Next, we bound the covering number. For that, we show that the class $\cB_n$ is uniformly Lipschitz with respect to the index variable $t \in [t_n,\delta]$, and bound the supremum Lipschitz constant. We first evaluate the derivative of $\beta_t$ with respect to $t$ as follows
\begin{align}
    \frac{\partial \beta_t(x)}{\partial t} &= \underbrace{-\frac{\rho'(t)}{\rho(t)} \beta_t(x)}_{\text{(I)}} 
    + \underbrace{\frac{1}{\rho(t)\sqrt{h^{d+1}}} \Phi(t, t) \nabla K\left(\frac{x - \X(t)}{h}\right)}_{\text{(II)}}  \\
    &\quad + \underbrace{\frac{1}{\rho(t)\sqrt{h^{d+1}}} \int_0^t \frac{\partial \Phi(t, s)}{\partial t} \nabla K\left(\frac{x - \X(s)}{h}\right) ds}_{\text{(III)}}.
\end{align}
Below we bound the supremum of each term over $x \in \bbR^d$ and $t \in [t_n, \delta]$. For (I), using the fact that $t \mapsto t \rho'(t) / \rho(t)$ is bounded by assumption, uniformly in $x \in \bbR^d$ and $t \in [t_n, \delta]$,
\begin{equation}
\text{(I)} = O(1/t_n) b_n = O(b_n/t_n).
\end{equation}
For (II), since $\Phi(t, t)$ is the identity matrix and $\nabla K$ has bounded gradient, uniformly in $x \in \bbR^d$ and $t \in [t_n, \delta]$,
\begin{equation}
\text{(II)} \le \frac{1}{\rho_n \sqrt{h^{d+1}}} \|\nabla K\|_\infty 
= O(b_n/h).
\end{equation}
For (III), as in \eqref{envelope}, but this time based on the fact that $\partial_t \Phi(t, s)$ is bounded, uniformly in $x \in \bbR^d$ and $t \in [t_n, \delta]$,
\begin{equation}
    \text{(III)} = O(b_n).
\end{equation}
Hence, 
\begin{align}\label{L_n}
L_n := \sup_{t \in [t_n, \delta]} \sup_{x \in \bbR^d} \left\| \frac{\partial \beta_t(x)}{\partial t} \right\| 
= O(b_n/t_n) + O(b_n/h) + O(b_n) 
= O(b_n/t_n),
\end{align}
by our choice of $t_n$ satisfying $t_n \ll h/\log n$.
It is now straightforward to bound the $\epsilon$-covering number of $\cB_n$ with respect to the supnorm. We cover $\cB_n$ with $L_\infty$-balls of radius $\epsilon$ by simply placing a one-dimensional grid over the time index set $[t_n, \delta]$ with spacing $\epsilon/L_n \asymp \epsilon t_n/b_n$. In more detail, define $s_j = j (\epsilon/L_n)$ for $j = 1, \dots, J_\epsilon$, where $J_\epsilon := \lfloor \delta/(\epsilon/L_n) \rfloor$. 
Then for any $t \in [t_n, \delta]$, there is $j$ in that range such that $|t-s_j| \le \epsilon/L_n$, implying that
\[
\|\beta_t - \beta_{s_j}\|_\infty
\le L_n |t-s_j| 
\le L_n (\epsilon/L_n) = \epsilon.
\]
Therefore, $\{\beta_{s_j} : j = 1, \dots, J_\epsilon\}$ is an $\epsilon$-covering of $\cB_n$, implying that the $\epsilon$-covering number of $\cB_n$ is bounded by $J_\epsilon \asymp L_n / \epsilon \asymp b_n/t_n \epsilon$ by way of \eqref{L_n}. 
This satisfies the requirement of \lemref{Cor2.2} with $a_1 \asymp 1/t_n$ and $a_2 = 1$. 

Third, we verify the uniform moment conditions. To establish a uniform variance bound, we express the functions in $\cB_n$ using the operator $\cE(\cdot)$ in \eqref{Eoper} as follows
\[\beta_t(X) = \frac{1}{\rho(t)\sqrt{h^{d+1}}} \cE(P_t).\] 
It is easy to see that \eqref{COvpart2} can be extended to matrix-valued function. Utilizing this, we can bound the maximum variance by
\begin{align}
\sigma_n^2 := \sup_{\beta \in \cB_n} \E\big[\|\beta(X)\|^2\big] &= \sup_{t \in [t_n, \delta]} \frac{1}{\rho(t)^2 h^{d+1}} \trace \bbE \big[\cE(P_t)\cE(P_t)^\top\big]  \\
    &\lesssim \sup_{t \in [t_n, \delta]} \frac{1}{\rho(t)^2 h^{d+1}} \cdot h^{d+1} \int_0^T \|P_t(s)\|^2 ds  \\
    &= \sup_{t \in [t_n, \delta]} \frac{1}{\rho(t)^2} \int_0^t \|\Phi(t, s)\|^2 ds  \\
    &\lesssim \sup_{t \in [t_n, \delta]} \frac{t}{\rho(t)^2} 
    \lesssim 1,
\end{align}
by our assumptions on $\rho$. 
Since the envelope $b_n \to\infty$, we have $\sigma_n^2 \le b_n$ eventually.
We can also bound the third absolute moment as follows
\begin{align}
    \sup_{\beta \in \cB_n} \E\big[\|\beta(X)\|^3\big] 
    \le \sup_{\beta  \in \cB_n} \Big( \|\beta\|_{\infty} \cdot \E\big[\|\beta(X)\|^2\big] \Big)
    \le b_n \sigma_n^2.
\end{align}

All this confirms that all the conditions in \lemref{Cor2.2} are met.

In \lemref{Cor2.2}, the supremum of the empirical process $Z_n^\delta$ is coupled to the supremum of a centered Gaussian process with the same covariance structure. Next we identify the coupled Gaussian supremum. Let $$g_t(X) = h^{-(d+1)/2} \int_0^t \Phi(t, u) \nabla K((X - \X(u))/h) du.$$ 
Let $U_h(t)$ be a centered Gaussian process with covariance function 
\[\Sigma_h(s, t) := \E[g_s(X) g_t(X)^\top] - \E[g_s(X)] \E[g_t(X)]^\top,\] 
which is also the covariance of the unweighted process $\Ut(t) - \E \Ut(t)$. Let 
\[Z_{n,h}^\delta = \sup_{t \in [t_n, \delta]} \frac{\|U_h(t)\|}{\rho(t)}.\]
By \lemref{Cor2.2}, there exist a constant $C > 0$ and a coupled random variable $\widetilde{Z}_{n,h}^\delta$ distributed identically to $Z_{n,h}^\delta$ such that, for any $\gamma \in (0,1)$,
\begin{equation}
    \P\big(|Z_n^\delta - \widetilde{Z}_{n,h}^\delta| > C r_{n, \gamma}\big) \le C\left(\gamma + \frac{\log n}{n}\right),
\end{equation}
where 
\begin{equation}
    r_{n, \gamma} := \frac{b_n Q_n}{\gamma^{1/2} n^{1/2}} + \frac{b_n^{1/2} Q_n^{3/4}}{\gamma^{1/2} n^{1/4}} + \frac{b_n^{1/3} Q_n^{2/3}}{\gamma^{1/3} n^{1/6}}, \quad \text{with (here) } Q_n := \log(n) \vee \log(b_n/t_n).
\end{equation}
We set $\gamma = \gamma_n := (\log n)^{-1}$ and below we will choose $t_n \to 0$ at most polynomially in $n$, and given that $h = h_n \to 0$  at most polynomially in $n$ and the order of magnitude of $b_n \to\infty$ in \eqref{b_n asymp}, $Q_n \asymp \log n$, so that
\begin{equation}
    r_n := r_{n, \gamma_n} = O(\log n)^{3/2} \left(\frac{b_n}{n^{1/2}} + \frac{b_n^{1/2}}{n^{1/4}} + \frac{b_n^{1/3}}{n^{1/6}}\right).
\end{equation}
Thus, to guarantee that $r_n \to 0$, it suffices that $(\log n)^{3/2} b_n^{1/3}/n^{1/6} \to 0.$ 
By \eqref{b_n asymp} and our assumptions on $\rho$, it is the case that $b_n \ll h^{(1-d)/2}/t_n^{1/2}$, so that it is enough that $t_n \gg h^{1-d} n^{-1} (\log n)^9$. (Given that $h = h_n \to 0$  at most polynomially in $n$ by assumption, this already implies that $t_n \to 0$ at most polynomially in $n$, as announced.)

\medskip\noindent
• \textit{Bound on $(0, \delta]$.} \quad
By the union bound, for any $\eta>0$, 
\begin{align}
\P\big(\|\Ut/\rho\|_\delta > \eta\big) 
\le \P\big(\|(\Ut - \E \Ut)/\rho\|_\delta > \eta/2\big) + \P\big(\|\E \Ut/\rho\|_\delta > \eta/2\big). 
\end{align}
In view of \eqref{global mean}, for any $\delta \le T$,
\begin{align}
    \limsup_{n \to \infty} \P\big(\|\E \Ut/\rho\|_\delta > \eta/2\big)
    \le \limsup_{n \to \infty} \P\big(\|\E \Ut/\rho\|_T > \eta/2\big)
= 0.
\end{align}
For the other term, by the union bound,
\begin{align}
\P\big(\|(\Ut - \E \Ut)/\rho\|_\delta > \eta/2\big)
\le \P\big(\|(\Ut - \E \Ut)/\rho\|_{[0,t_n]} > \eta/2\big)
+ \P\big(\|(\Ut - \E \Ut)/\rho\|_{[t_n, \delta]} > \eta/2\big).
\end{align}
As established in \eqref{extreme centered}, under the condition $t_n \ll h / \log n$, 
\begin{align}
\lim_{n \to \infty} \P\big(\|(\Ut - \E \Ut)/\rho\|_{[0,t_n]} > \eta/2\big) = 0.
\end{align}
For the other term, by the union bound,
\begin{align}
\P\big(\|(\Ut - \E \Ut)/\rho\|_{[t_n, \delta]} > \eta/2\big) 
&= \P\big(Z_n^\delta > \eta/2\big) \\
&\le \P\left(\widetilde{Z}_{n,h}^\delta > \eta/2 - r_n\right) + o(1),
\end{align}
with $r_n \to 0$ under the condition $t_n \gg h^{1-d} n^{-1} (\log n)^9$. 
Note that the two conditions on $t_n$ are compatible by the fact that $n h^d \to \infty$ polynomially in $n$ under our assumptions on $h$.
Consequently, eventually as $n \to \infty$, 
\begin{align}
\limsup_{n \to \infty} \P\big(\|(\Ut - \E \Ut)/\rho\|_{[t_n, \delta]} > \eta/2\big) 
\le \limsup_{h \to 0} \P(Z_h^\delta > \eta/3),
\end{align}
where (with some abuse on notation)
\begin{align}
Z_h^\delta := \sup_{t \in (0, \delta]} \frac{\|U_h(t)\|}{\rho(t)}.
\end{align}

For any fixed bandwidth $h > 0$, the Gaussian process $U_h(t)$ is defined by integrating the smooth kernel $K$, ensuring its sample paths are continuously differentiable with $U_h(0) = 0$ almost surely. Since the boundary weight satisfies $\rho(t) \gg \sqrt{t}$, the ratio $\|U_h(t)\|/\rho(t)$ continuously vanishes at the origin, guaranteeing that the supremum on the right-hand side is a well-defined, finite random variable.

To establish its asymptotic limit, we recall that \thmref{weak} established that $\Ut \weak U$ in $\cC[0, T]$. By standard Gaussian approximation theory, the tightness of an empirical process implies the tightness of the centered Gaussian process sharing its exact covariance function. Therefore, $(U_h)$ is tight in $\cC[0, T]$ and converges weakly to the same process, i.e., $U_h \weak U$ (as $h \to 0$).

Because the functional $u \mapsto \sup_{t \in (0, \delta]} \|u(t)\|/\rho(t)$ is continuous on the set of paths where $u(t)/\rho(t) \to 0$ as $t \downarrow 0$, and the Law of the Iterated Logarithm guarantees the limiting process $U$ resides in this set almost surely, the Continuous Mapping Theorem yields
\begin{equation}
Z_h^\delta = \sup_{t \in (0, \delta]} \frac{\|U_h(t)\|}{\rho(t)} \longweak Z^\delta := \sup_{t \in (0, \delta]} \frac{\| U(t) \|}{\rho(t)}, \quad h \to 0.
\end{equation}
Consequently, by the Portmanteau Theorem, 
\begin{align}
\limsup_{h \to 0} \P(Z_h^\delta > \eta/3)
\le \P(Z^\delta \ge \eta/3).
\end{align}
The Law of the Iterated Logarithm guarantees that $\lim_{t \downarrow 0} \|U(t)\|/\rho(t) = 0$ almost surely, implying that
\begin{align}
\P(Z^\delta \ge \eta/3) \to 0, \quad \delta \to 0.
\end{align}
This formally establishes the uniform asymptotic negligibility of the approximation error, successfully fulfilling Condition (iii) of the Approximation Theorem \cite[Th~3.2]{billingsley1999convergence}.

Finally, to satisfy Condition (ii) of the Approximation Theorem, we must verify that the truncated limit process converges weakly to the Gaussian limit process $U/\rho$ as $\delta \downarrow 0$. Let the truncated limit be defined by truncation at $\delta$, i.e., $t \mapsto U(t \vee \delta)/\rho(t \vee \delta)$. 
By the triangle inequality,
\begin{align}
\sup_{t \in [0, T]} \left| \frac{U(t \vee \delta)}{\rho(t \vee \delta)} - \frac{U(t)}{\rho(t)} \right| 
&= \sup_{t \in (0, \delta]} \left| \frac{U(\delta)}{\rho(\delta)} - \frac{U(t)}{\rho(t)} \right| \\
&\le 2 \sup_{t \in (0, \delta]} \frac{|U(t)|}{\rho(t)} = 2 Z^\delta \to 0,
\end{align}
in probability as $\delta \to 0$, as we saw above.

Having satisfied all three conditions of the Approximation Theorem \cite[Th~3.2]{billingsley1999convergence}, we are able to conclude.

\section{On the suprema of solutions to SDEs}
\label{sec:app_sup}

We study the supnorm of the solution to an SDE of the form
\begin{align}
\label{sde}
d\U(t) = A(t) \U(t) dt + B(t) dW(t), \quad t \in [0,T],
\end{align}
with initial condition $\U(0)=0$. As before, $W$ is a standard Brownian motion in $\bbR^d$, and we assume that $A$ and $B$ are continuous and bounded on $[0,T]$, with $B$ being psd.
Because the SDE is linear, $\U$ can expressed as follows
\begin{align}\label{U sde}
\U(t) = \int_0^t M(t) M(s)^{-1} B(s) dW(s),
\end{align}
where $M$ is solution to $M'(t) = A(t) M(t)$ with $M(0) = I$.
This process has covariance matrix
\begin{align}\label{Sigma sde}
\Sigma(t) := \Cov \U(t) = \int_0^t M(t) M(s)^{-1} B(s)^2 M(s)^{-\top} M(t)^\top ds.
\end{align}
Under \aspref{f}, the SDE \eqref{SDEMain} is certainly of this form. 

The suprema of Gaussian processes have been the object of intense study over decades. We rely below on Malliavin calculus \cite{nualart2006malliavin} to derive the particular result that we want.

\begin{lemma}
\label{lem:Q quant}
Consider the solution $\U$ to an SDE of the form \eqref{sde}.  
Then $Q := \|\U\|_T$ is absolutely continuous and its support is an interval. Consequently, its distribution function is continuous and strictly increasing over its support, or equivalently, its quantile function is continuous and strictly increasing over $(0,1)$.
\end{lemma}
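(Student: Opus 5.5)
The plan is to treat $Q=\|\U\|_T$ as a functional on Wiener space, derive the interval support from a connectedness argument, and derive absolute continuity from Malliavin calculus via the Bouleau--Hirsch criterion \cite{nualart2006malliavin}. Throughout I exclude the degenerate case $\Sigma\equiv 0$ on $[0,T]$, where $\U\equiv 0$ and $Q=0$. Otherwise $\P(Q=0)=0$, since $Q=0$ forces $\U(t)=0$ at some $t$ with $\Sigma(t)\neq 0$.

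\emph{Support.} By \eqref{U sde}, $\U=\Lambda(W)$ for a linear map $\Lambda$ taking Brownian paths to $\cC[0,T]$. This map is continuous on the Cameron--Martin space: integrating by parts, using that $M$ and $B$ are $C^1$ in our setting, shows the law of $\U$ is a centered Gaussian measure on $\cC[0,T]$. Its topological support $S$ is therefore a closed linear subspace, namely the closure of $\{\int_0^\cdot M(\cdot)M(s)^{-1}B(s)g(s)\,ds : g\in L^2\}$. The map $u\mapsto\|u\|_T$ is continuous, and $S$ is connected, so the support of $Q$ is the closure of the connected set $\{\|u\|_T: u\in S\}$. That closure is an interval containing $0$, since $0\in S$. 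It is unbounded because $S\neq\{0\}$ is a linear space, so the support is $[0,\infty)$.

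\emph{Absolute continuity.} Write $Q=\sup_{(t,e)\in[0,T]\times \mathbb S^{d-1}} e^\top\U(t)$, the supremum of a continuous centered Gaussian process over a compact index set. By \cite[Prop 2.1.10]{nualart2006malliavin}, $Q\in\mathbb D^{1,2}$. Moreover $DQ=D\big(e_*^\top \U(t_*)\big)$ at the a.s.\ unique maximizer $(t_*,e_*)$, where $e_*=\U(t_*)/\|\U(t_*)\|$. Concretely,
\[
D_sQ = B(s)M(s)^{-\top}M(t_*)^\top e_*\,\1\{s\le t_*\},
\qquad\text{so}\qquad
\|DQ\|_{L^2}^2 = e_*^\top\Sigma(t_*)e_* .
\]
By Bouleau--Hirsch it then suffices to show $e_*^\top\Sigma(t_*)e_*>0$ a.s. The key observation is that $\U(t)\in\range\Sigma(t)$ for all $t$ a.s. On a countable dense set of times this holds because $\U(t)$ is Gaussian with covariance $\Sigma(t)$. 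I expect to extend it to all $t$ by continuity, using $\range\Sigma(t)$ nondecreasing in $t$; to verify this, note that the integrand $M(s)^{-1}B(s)^2M(s)^{-\top}$ in \eqref{Sigma sde} is psd, so $\range\big(M(t)^{-1}\Sigma(t)M(t)^{-\top}\big)$ increases in $t$. Given this, on $\{Q>0\}$ the vector $e_*$ is a nonzero vector in $\range\Sigma(t_*)$, hence $e_*^\top\Sigma(t_*)e_*>0$.

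\emph{Main obstacle.} The hardest step is the a.s.\ uniqueness of the maximizer needed for the formula for $DQ$. In fact the sign of $\|DQ\|$ only needs some maximizer, because any measurable choice gives a valid derivative representation once we apply Nualart's argument, which selects a maximizer via a countable approximation. If uniqueness proves troublesome, I would instead follow the proof of \cite[Prop 2.1.11]{nualart2006malliavin} directly: it only requires that $DQ$ equals $D(e^\top\U(t))$ at a maximizer, together with a nondegeneracy condition on the variance at maximizers. Our range argument supplies exactly that condition, with $\Sigma(t)$ replacing the scalar variance.

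Combining the two parts, $Q$ has a density and its support is the interval $[0,\infty)$. The distribution function is therefore continuous and strictly increasing on the support, which is equivalent to the stated property of the quantile function.
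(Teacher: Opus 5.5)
Your proof is correct. Its core is the same as the paper's: Nualart's Props~2.1.10--2.1.11 for suprema, with nondegeneracy at the maximizers supplied by the a.s.\ inclusion $\U(t)\in\range\Sigma(t)$ for all $t$. The routes differ in three places.

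\emph{Support.} You push the Gaussian law of $\U$ on $\cC[0,T]$, whose support is a closed linear subspace, forward through $u\mapsto\|u\|_T$. The paper instead invokes Prop~2.1.7 (a $\mathbb{D}^{1,2}$ variable has connected support). Your argument is more elementary and sharper, since it identifies the support as $[0,\infty)$.

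\emph{Absolute continuity.} You write $Q=\sup_{(t,e)\in[0,T]\times\mathbb{S}^{d-1}}e^\top\U(t)$. The indexed process is then linear in $W$, with deterministic derivative and $\|D(e^\top\U(t))\|^2=e^\top\Sigma(t)e$. This makes the hypotheses of Prop~2.1.10 immediate and gives a density for $Q$ directly. The paper instead takes the supremum of the nonlinear process $H(t)=\|\U(t)\|^2$, with $\|DH(t)\|^2=4\,\U(t)^\top\Sigma(t)\U(t)$. That gives a density for $Q^2$, at the gain of a one-dimensional index set.

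\emph{Range inclusion.} Your dense-set-plus-monotone-range limiting argument replaces the paper's \lemref{in range} (Fubini plus continuity of the null-space projector between finitely many rank jumps), and it is shorter.

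Three points need tidying.

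\emph{Unique maximizer.} Drop the assertion of an a.s.\ unique maximizer: in the generality of \eqref{sde} it can fail. Take $d=1$, $B$ vanishing after time $1$, and $A$ making $|\U|$ rise, fall, and rise again to the same value; with positive probability the maximum is attained at two distinct times. Uniqueness is also not needed. Prop~2.1.11 only requires $\|DX\|\neq0$ at every maximizer, which is your fallback and exactly what the range inclusion delivers.

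\emph{Monotonicity of the range.} $\range\Sigma(t)$ is not monotone in general, because $M(t)$ moves it. What your computation actually shows is that $\range\Gamma(t)$ is nondecreasing, where $\Gamma(t):=M(t)^{-1}\Sigma(t)M(t)^{-\top}$. So run the limiting argument on $M(t)^{-1}\U(t)$, whose covariance is $\Gamma(t)$, approximating $t$ from the left by $t_k\uparrow t$ in the dense set. Then $M(t_k)^{-1}\U(t_k)\in\range\Gamma(t_k)\subseteq\range\Gamma(t)$, a closed subspace, and continuity finishes. This is also the correct justification for the rank monotonicity the paper uses: its claim that $\Sigma$ itself is Loewner-increasing overlooks that $\Sigma(s)$ carries $M(s)$, not $M(t)$.

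\emph{The $C^1$ remark.} The integration-by-parts remark and the claim that $B$ is $C^1$ are unnecessary, and the latter need not hold. A continuous process with centered Gaussian finite-dimensional marginals already has a Gaussian law on $\cC[0,T]$, with the Cameron--Martin space you describe.
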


\begin{proof}
It suffices to show that $Q$ has a density with respect to the Lebesgue measure and that its support is an interval. 

Based on the expression for $\U$ in \eqref{U sde}, the Malliavin derivative of $\U(t)$ is given by 
\[D_s \U(t) = M(t) M(s)^{-1} B(s)\, \1_{[0,t]}(s).\]
Therefore, by the chain rule, $H(t) := \|\U(t)\|^2$ has Malliavin derivative 
\[D_s H(t) = 2 \U(t)^\top D_s \U(t) = 2 \U(t)^\top M(t) M(s)^{-1} B(s)\, \1_{[0,t]}(s).\]
The norm of this as an element of $L^2([0,T]; \bbR^d)$ --- which is the underlying Hilbert space --- is given by 
\begin{align}
\|D H(t)\|_{L^2}^2
= \int_0^T D_s H(t)^2 ds
&= 4 \int_0^t
\U(t)^\top M(t)M(s)^{-1} B(s)B(s)^\top
M(s)^{-\top}M(t)^\top \U(t) ds \\
&= 4\, \U(t)^\top \Sigma(t) \U(t). \label{DH L2}
\end{align}

We are in a position to apply \cite[Prop 2.1.11]{nualart2006malliavin}, which gives the existence of a density for $Q^2 = \|\U\|_T^2 = \sup_{t \in [0,T]} H(t)$ --- and therefore for $Q$  --- if, with probability one, 
\begin{align}\label{DH}
\text{$\|D H(t)\|_{L^2} \ne 0$ for all $t$ such that $H(t) = Q^2$.}
\end{align}
Note that $Q > 0$ with probability one since, for example, $\U(T)$ is normal with covariance matrix $\Sigma(T) \ne 0$. Therefore, with probability one, $\U(t) \ne 0$ for all $t$ such that $H(t) = Q^2$.
In view of \eqref{DH L2}, this immediately implies \eqref{DH} if it is the case that $\Sigma(t)$ is non-singular for all $t > 0$. (Note that $\U(0) = 0$, so that, with probability one, $t = 0$ is not a maximizer of $H$.)
However, assuming this is true is not needed, as we show below in \lemref{in range}. 

It remains to show that the support of $Q$ is an interval. 
Without going into straightforward details, \cite[Prop 2.1.10]{nualart2006malliavin} applies to $H(t)$ to give that $Q^2$ has a square integrable Malliavin derivative. We then invoke \cite[Prop 2.1.7]{nualart2006malliavin}, which implies that the support of a random variable must be connected.
\end{proof}

\begin{lemma}\label{lem:in range}
Let $\U$ be a continuous stochastic process in dimension $d$ defined on the interval $(0,T)$, with well-defined and continuous covariance matrix $\Sigma$. If $\rank \Sigma$ has finitely many discontinuities, then with probability one, $\U(t) \in \range \Sigma(t)$ for all $t \in (0,T)$. 
\end{lemma}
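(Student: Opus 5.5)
The plan is to prove that, for each fixed $t$, $\U(t)\in\range\Sigma(t)$ almost surely, and then to pass from fixed times to all times simultaneously using path continuity together with the finiteness of rank jumps.

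\emph{Step 1: a fixed time.} Fix $t\in(0,T)$ and let $a$ be in the kernel of $\Sigma(t)$. Then
\[
\E\big[(a^\top(\U(t)-\E\U(t)))^2\big]=a^\top\Sigma(t)a=0,
\]
so $a^\top\U(t)=a^\top\E\U(t)$ almost surely. In our application $\U$ is centered, so I will assume that here; in general one replaces $\U$ by $\U-\E\U$, and with zero mean $a^\top\U(t)=0$ a.s. Running $a$ over a basis of $\ker\Sigma(t)$ gives $\U(t)\in(\ker\Sigma(t))^\perp=\range\Sigma(t)$ a.s., since $\Sigma(t)$ is symmetric.

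\emph{Step 2: a countable dense set of times.} Let $D\subset(0,T)$ be countable and dense, for example the rationals. By Step 1 and countable additivity, with probability one $\U(s)\in\range\Sigma(s)$ for all $s\in D$.

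\emph{Step 3: extending to all $t$.} Let $t_1<\dots<t_k$ be the finitely many discontinuity points of $t\mapsto\rank\Sigma(t)$, and let $I$ be one of the open intervals between consecutive ones. On $I$ the rank is a constant $r$. The orthogonal projector $P(t)$ onto $\range\Sigma(t)$ is then continuous on $I$, because with constant rank the nonzero eigenvalues of the continuous matrix $\Sigma(t)$ stay separated from $0$ locally. Concretely, $P(t)$ is given by a Riesz contour integral of the resolvent around the nonzero spectrum, and that contour can be held fixed near any $t_0$. Now take $t\in I$ and $s_m\in D\cap I$ with $s_m\to t$. By Step 2, $(I-P(s_m))\U(s_m)=0$, and continuity of $\U$ and of $P$ lets us pass to the limit to get $(I-P(t))\U(t)=0$. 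So the inclusion holds on every such interval simultaneously, on a single probability-one event.

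\emph{Step 4: the exceptional points.} At each of the finitely many points $t_j$, Step 1 applies directly, and a finite union of null events is null. This completes the argument.

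The step I expect to be the main obstacle is the continuity of $P(t)$ on intervals of constant rank, which is where the rank hypothesis is actually used. I will justify it by the perturbation argument: if $\lambda_r(t_0)>0$ is the smallest nonzero eigenvalue, then for $t$ near $t_0$ the eigenvalues of $\Sigma(t)$ split into $r$ of them above $\lambda_r(t_0)/2$ and $d-r$ of them below it. Constant rank forces the lower group to be exactly zero, so $P(t)$ is the spectral projector for the region $\{\lambda>\lambda_r(t_0)/2\}$, and that projector depends continuously on $\Sigma(t)$.
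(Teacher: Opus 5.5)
Your proof is correct and follows the same route as the paper: the fixed-time inclusion comes from $a^\top\Sigma(t)a=0$, it is extended to each open interval of constant rank using continuity of the projector and of the paths, and the finitely many rank-jump points get a separate fixed-time argument. The differences are minor. You pass through a countable dense set of times where the paper invokes Fubini, and you justify continuity of the projector and flag the implicit mean-zero assumption, both of which the paper takes for granted.
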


The result applies in the context of \lemref{Q quant} because, for the solution to an SDE of the form \eqref{sde}, $\Sigma$ is increasing in the Loewner order since, for any $s < t$, 
\[\Sigma(t) - \Sigma(s) = \int_s^t M(t) M(r)^{-1} B(r)^2 M(r)^{-\top} M(t)^\top dr \succeq 0,\]
and $B(r)$ is psd for all $r$. Therefore, $\rank \Sigma$ must be non-decreasing, and thus can have at most $d$ discontinuities. 
And the result is useful because $\U(t) \in \range \Sigma(t)$ implies that $\U(t)^\top \Sigma(t) \U(t) = 0$ cannot happen unless $\U(t) = 0$.

\begin{proof}
Let $0 < t_1 < \cdots < t_m < T$ denote the possible discontinuity points, and define $t_0 = 0$ and $t_{m+1} = T$. 
Consider the projection $P(t)$ onto the null space of $\Sigma(t)$, defined in such a way that $P$ is continuous on each interval $(t_j, t_{j+1})$.   
For a given $t$, since $\Sigma(t)$ is the covariance of $\U(t)$, with probability one, $\U(t) \in \range\Sigma(t)$, implying that $\|P(t) \U(t)\| = 0$. Let $\omega$ denote a generic element of the background sample space. We just found that, for all $t$, $h(t, \omega) := \|P(t) \U(t, \omega)\| = 0$ for almost all $\omega$. By Fubini's theorem, this implies that for almost all $\omega$, $h(t, \omega) = 0$ for almost all $t$. By the fact that $\U$ is a continuous process, for almost all $\omega$, $h(t, \omega) = 0$ for almost all $t$ and $t \mapsto U(t, \omega)$ is continuous. For such an $\omega$, $t \mapsto h(t, \omega)$ is continuous on each interval $(t_j, t_{j+1})$, and being zero almost everywhere, it must be zero everywhere on each such interval. We thus found that, with probability one, $\U(t) \in \range\Sigma(t)$ for all $t$ except possibly $\{t_1, \dots, t_m\}$. However, we know that for each given $t$, with probability one, $\U(t) \in \range\Sigma(t)$. We can thus conclude that with probability one, $\U(t) \in \range\Sigma(t)$ for all $t$.
\end{proof}

\end{document}